\newtheorem{thm}{Theorem}[section]
\newtheorem{pro}[thm]{Proposition}
\theoremstyle{definition}
\newtheorem{defn}[thm]{Definition}
\newtheorem{hyp}[thm]{Hypothesis}
\newtheorem{remark}[thm]{Remark}
\numberwithin{equation}{section}
\newcommand{\ol}[1]{\overline{#1}}
\renewcommand{\hat}[1]{\widehat{#1}}
\renewcommand{\tilde}[1]{\widetilde{#1}}
\newcommand{\set}[1]{{\left\{#1\right\}}}
\newcommand{\pa}[1]{{\left(#1\right)}}
\newcommand{\gen}[1]{{\left\langle #1\right\rangle}}
\newcommand{\abs}[1]{{\left|#1\right|}}
\newcommand{\norm}[1]{{\left\|#1\right\|}}
\newcommand{\ssm}{\smallsetminus}
\newcommand{\ra}{\rightarrow}
\newcommand{\N}{\mathbb{N}}
\newcommand{\R}{\mathbb{R}}
\newcommand{\E}{\mathbb{E}}
\newcommand{\eqsys}[1]{{\left\{\begin{array}{ll}#1\end{array}\right.}}
\newcommand{\elle}{\operatorname{L}}
\newcommand{\tc}{\, \middle |\,}                                                
\newcommand{\con}{\operatorname{\mathscr{C}}}
\newcommand{\eps}{\varepsilon}
\newcommand{\fcon}{\operatorname{\mathscr{FC}}}
\newcommand{\OU}{{\operatorname{\mathscr{L}}}}
\DeclareMathOperator{\lip}{\operatorname{Lip}}
\DeclareMathOperator{\diver}{\operatorname{div}}
\DeclareMathOperator{\grad}{\operatorname{grad}}
\def\thebibliography#1{\section*{References\@mkboth
{References}{References}}\list
{[\arabic{enumi}]}{\settowidth\labelwidth{[#1]}\leftmargin\labelwidth
\advance\leftmargin\labelsep
\usecounter{enumi}}
\def\newblock{\hskip .11em plus .33em minus .07em}
\sloppy\clubpenalty4000\widowpenalty4000 
}
\begin{document}

\frenchspacing

\setcounter{page}{1}

\title[Sobolev Regularity for Infinite Dimensional Weighted Elliptic Problems]{Maximal Sobolev regularity for solutions of elliptic equations in infinite dimensional Banach spaces endowed with a weigthed Gaussian measure}

\author[G. Cappa and S. Ferrari]{{G. Cappa and {S. Ferrari$^1$}$^{*}$}}

\address{$^{1}$ Dipartimento di Matematica e Informatica, Universit\`a degli Studi di Parma, Parco Area delle Scienze 53/A, 43124 Parma, Italy.}
\email{\textcolor[rgb]{0.00,0.00,0.84}{gianluca.cappa@nemo.unipr.it, simone.ferrari1@unipr.it}}


\subjclass[2010]{28C20, 35D30, 35D35, 35J15, 46G12}

\keywords{Weighted Gaussian measures, Wiener spaces, Sobolev regularity, maximal regularity, Moreau--Yosida approximations, infinite dimension, elliptic problems, elliptic equations, domains of operators.}

\date{\today
\newline \indent $^{*}$ Corresponding author}

\begin{abstract}
Let $X$ be a separable Banach space endowed with a non-degenerate centered Gaussian measure $\mu$. The associated Cameron--Martin space is denoted by $H$.  Let $\nu=e^{-U}\mu$, where $U:X\ra\R$ is a sufficiently regular convex and continuous function. In this paper we are interested in the $W^{2,2}$ regularity of the weak solutions of elliptic equations of the type
\[\lambda u-L_\nu u=f,\]
where $\lambda>0$, $f\in \elle^2(X,\nu)$ and $L_\nu$ is the self-adjoint operator associated with the quadratic form
\[(\psi,\varphi)\mapsto \int_X\gen{\nabla_H\psi,\nabla_H\varphi}_Hd\nu\qquad\psi,\varphi\in W^{1,2}(X,\nu).\]
\end{abstract}

\maketitle

\section{Introduction} \label{Introduction}

Let $X$ be a separable Banach space with norm $\norm{\cdot}_X$, endowed with a non-degenerate centered Gaussian measure $\mu$. The associated Cameron--Martin space is denoted by $H$, its inner product by $\gen{\cdot,\cdot}_H$ and its norm by $\abs{\cdot}_H$.
The spaces $W^{1,p}(X,\mu)$ and $W^{2,p}(X,\mu)$ for $p\geq 1$ are the classical Sobolev spaces of the Malliavin calculus (see \cite[Chapter 5]{Bog98}).

The aim of this paper is to study the solutions of the equation
\begin{gather}
\lambda u-L_\nu u=f\label{Problema}
\end{gather}
where $\lambda >0$, $\nu$ is a measure of the form $e^{-U}\mu$ with $U:X\ra\R$ a convex and continuous function, $f\in\elle^2(X,\nu)$ and $L_{\nu}$ is the operator associated to the quadratic form
\[(\psi,\varphi)\mapsto \int_X\gen{\nabla_H\psi,\nabla_H\varphi}_Hd\nu\qquad\psi,\varphi\in W^{1,2}(X,\nu),\]
where $\nabla_H \psi$ represent the gradient along $H$ of $\psi$ and $W^{1,2}(\Omega,\nu)$ is the Sobolev space on $\Omega$ associate to the measure $\nu$ (see Section \ref{Notations and preliminaries}).

We need to clarify what we mean with \emph{solution} of problem \eqref{Problema}. We say that $u\in W^{1,2}(X,\nu)$ is a \emph{weak solution} of equation \eqref{Problema} if
\[\lambda\int_X u\varphi d\nu+\int_X\gen{\nabla_H u,\nabla_H\varphi}_Hd\nu=\int_Xf\varphi d\nu\qquad\text{for every }\varphi\in\fcon^\infty_b(X).\]
Notice that the weak solution is just $R(\lambda,L_\nu)f$, the resolvent of $L_\nu$.

In the finite dimensional case, existence, uniqueness and maximal regularity of the solution of equation \eqref{Problema} have been widely studied. Indeed in the case of the standard Gaussian measure in $\R^n$, the operator $L_\nu$ reads as
\[L_\nu u(\xi)=\Delta u(\xi)-\gen{\grad U(\xi)+\xi,\grad u(\xi)}\qquad u\in\con^2_b(\R^n),\]
so that, if $U$ is smooth, $L_\nu$ is an elliptic operator with smooth, although possibly unbounded, coefficients.
See for example \cite{DPG01}, \cite{BL07} and \cite{MPRS02}. In the infinite dimensional case maximal $W^{2,2}$ regularity results are known when $X$ is a separable Hilbert space. See for example \cite{DPL14}, where $U$ is assumed to be bounded from below. In the general Banach spaces case some results are known about equation \eqref{Problema}, but we do not know any $W^{2,2}$ regularity result. See for example \cite{AR91}, where a much larger class of operator is studied.

In order to state the results of this paper we need some hypotheses on the weighted measure $\nu$.

\begin{hyp}\label{ipotesi peso}
$U:X\ra\R$ is a convex and continuous function belonging to $W^{1,t}(X,\mu)$ for some $t>3$.  We set $\nu:=e^{-U}\mu$.
\end{hyp}
\noindent
The assumption $t>3$ may sound strange, but it is needed to define the weighted Sobolev spaces $W^{1,2}(X,\nu)$. Indeed observe that if $U$ satisfies Hypothesis \ref{ipotesi peso}, then it satisfies \cite[Hypothesis 1.1]{Fer15} since, by \cite[Lemma 7.5]{AB06}, $e^{-U}$ belongs to $W^{1,r}(X,\mu)$ for every $r<t$. Then following \cite{Fer15} it is possible to define the space $W^{1,2}(X,\nu)$ as the domain of the closure of the gradient operator along $H$.

The main result of this paper is the following theorem.

\begin{thm}\label{Main Theorem}
Let $U$ be a function satisfying Hypothesis \ref{ipotesi peso}, let $\lambda>0$ and $f\in\elle^2(X,\nu)$. Then equation (\ref{Problema}) has a unique weak solution $u\in W^{2,2}(X,\nu)$. Moreover $u$ satisfies
\begin{gather}
\norm{u}_{\elle^2(X,\nu)}\leq\frac{1}{\lambda}\norm{f}_{\elle^2(X,\nu)};\qquad \norm{\nabla_H u}_{\elle^2(X,\nu;H)}\leq\frac{1}{\sqrt{\lambda}}\norm{f}_{\elle^2(X,\nu)};\\
\norm{\nabla_H^2 u}_{\elle^2(X,\nu;\mathcal{H}_2)}\leq \sqrt{2}\norm{f}_{\elle^2(X,\nu)}.
\end{gather}
where $\nabla_H^2$ is defined in Section \ref{Notations and preliminaries} and $\mathcal{H}_2$ is the space of the Hilbert--Schmidt operators in $H$.
\end{thm}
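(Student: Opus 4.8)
The plan is to obtain the $W^{2,2}$ estimate by a three-step approximation procedure: first reduce to a cylindrical/finite-dimensional setting, then regularize the weight $U$, and finally pass to the limit. The starting point is that the weak solution is $u = R(\lambda, L_\nu) f$, so the first two estimates (the $\elle^2$ bound and the gradient bound) are immediate from the spectral theorem: testing the weak formulation with $\varphi = u$ gives $\lambda \norm{u}_{\elle^2(X,\nu)}^2 + \norm{\nabla_H u}_{\elle^2(X,\nu;H)}^2 = \int_X f u \, d\nu \le \norm{f}_{\elle^2(X,\nu)} \norm{u}_{\elle^2(X,\nu)}$, whence both follow at once. The entire difficulty is the second-order bound $\norm{\nabla_H^2 u}_{\elle^2(X,\nu;\mathcal{H}_2)} \le \sqrt{2} \norm{f}_{\elle^2(X,\nu)}$.

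For the second-order estimate the model computation is the following formal identity: if everything were smooth and finite dimensional, one differentiates the equation $\lambda u - L_\nu u = f$ along a Cameron--Martin direction $h_i$ of an orthonormal basis $\set{h_i}$ of $H$, multiplies by $\partial_i u$, sums over $i$, and integrates against $\nu$. Using the commutation relation between $\partial_i$ and $L_\nu$ (which produces a term $\partial_i u$ from the Ornstein--Uhlenbeck part and a term involving $D^2 U$ from the weight, the latter being \emph{nonnegative} because $U$ is convex), one arrives at
\[
\lambda \norm{\nabla_H u}_{\elle^2(X,\nu;H)}^2 + \norm{\nabla_H^2 u}_{\elle^2(X,\nu;\mathcal{H}_2)}^2 + \norm{\nabla_H u}_{\elle^2(X,\nu;H)}^2 + \int_X \gen{D^2U \, \nabla_H u, \nabla_H u}_H \, d\nu = \int_X \gen{\nabla_H f, \nabla_H u}_H \, d\nu.
\]
Discarding the two nonnegative terms involving $\lambda$ and $D^2 U$, and bounding the right-hand side as $\int_X \gen{\nabla_H f, \nabla_H u}_H \, d\nu \le \norm{\nabla_H f}_{\elle^2} \norm{\nabla_H u}_{\elle^2}$ — or better, integrating by parts once more to move a derivative off $f$ — one needs to control $\norm{\nabla_H f}$ by $\norm{f}$, which is not available. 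The correct route is instead: keep the term $\norm{\nabla_H u}_{\elle^2}^2$ on the left, write the right side as $\int_X f \, (-L_\nu u + \lambda u) \cdot(\text{something})$... more cleanly, rewrite $\int_X \gen{\nabla_H f,\nabla_H u}_H d\nu = \int_X f (\lambda u - L_\nu u - \text{extra}) $ is circular; the standard trick is to use $\int_X \gen{\nabla_H f, \nabla_H u}_H\, d\nu \le \frac12 \norm{f}_{\elle^2}^2 + \frac12 \norm{L_\nu u}_{\elle^2}^2$ after an integration by parts, combined with $\norm{L_\nu u}_{\elle^2} \le \norm{f}_{\elle^2} + \lambda\norm{u}_{\elle^2} \le 2\norm{f}_{\elle^2}$, which after bookkeeping yields precisely the constant $\sqrt 2$.

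I would carry this out rigorously as follows. \textbf{Step 1: cylindrical approximation of $f$.} Approximate $f \in \elle^2(X,\nu)$ by smooth cylindrical functions $f_n$; by continuity of the resolvent it suffices to prove the estimate for such $f$, provided the bound is uniform. \textbf{Step 2: regularization of the weight via Moreau--Yosida.} Since $U$ is convex and continuous, replace $U$ by its Moreau--Yosida approximations $U_\alpha$ along $H$ (as the keyword list and the hypothesis $U \in W^{1,t}$ suggest), which are convex, $H$-Lipschitz and of class $C^1$; one checks $U_\alpha \to U$ and $\nabla_H U_\alpha \to \nabla_H U$ in the appropriate $\elle^r(X,\mu)$ sense, using \cite[Lemma 7.5]{AB06} as cited, so that $\nu_\alpha := e^{-U_\alpha}\mu \to \nu$ and the corresponding resolvents converge. \textbf{Step 3: finite-dimensional reduction.} For fixed regularized weight, project onto an $n$-dimensional subspace spanned by eigenvectors of the covariance; here the operator becomes a genuine uniformly elliptic operator with smooth coefficients and convex drift potential, so classical results (the cited \cite{DPG01}, \cite{BL07}, \cite{MPRS02}) give $W^{2,2}$ solutions together with exactly the integration-by-parts identity displayed above — the key point being that the Hessian term $\int \gen{D^2 U_\alpha \nabla_H u, \nabla_H u} \ge 0$ lets us drop it and keep a \emph{dimension-free} constant. \textbf{Step 4: pass to the limit} in $n$, then in $\alpha$, then in $n$ (the $f$-approximation), using lower semicontinuity of the $\elle^2$ norms of $u$, $\nabla_H u$, $\nabla_H^2 u$ along weakly convergent subsequences to transfer the uniform bound to the limit $u$, and closability of $\nabla_H^2$ on $W^{2,2}(X,\nu)$ (which must have been set up in Section \ref{Notations and preliminaries}) to conclude $u \in W^{2,2}(X,\nu)$. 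Uniqueness is clear since $\lambda > 0$ is in the resolvent set.

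The main obstacle is \textbf{Step 2–3 compatibility}: one must ensure that the finite-dimensional integration-by-parts identity survives the double limit with a constant independent of both the dimension $n$ and the regularization parameter $\alpha$. This requires (i) showing $D^2 U_\alpha \ge 0$ in the distributional sense uniformly (automatic from convexity of $U_\alpha$), (ii) controlling the cross terms coming from the fact that the finite-dimensional projection of $e^{-U}$ is not exactly $e^{-U_n}$ for the projected potential — one has to handle conditional expectations $\E_\mu[e^{-U} \mid \mathcal{F}_n]$ carefully and verify they converge in $\elle^r$ with the right integrability, which is exactly where the hypothesis $t > 3$ is consumed — and (iii) justifying that the limiting $u$ genuinely lies in the domain of $\nabla_H^2$, i.e. that the a priori bound on second derivatives of the approximants is not lost. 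Everything else is bookkeeping with Hölder's inequality and the dominated convergence theorem.
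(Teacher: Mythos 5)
Your proposal follows essentially the same route as the paper: the first two estimates by testing with $u$ itself, the Hessian bound by differentiating the equation, using convexity of $U$ to discard the term $\int_X\gen{\nabla_H^2 U\,\nabla_H u,\nabla_H u}_H\,d\nu$, and then a double approximation by Moreau--Yosida regularization of the weight and cylindrical/finite-dimensional reduction, with weak compactness and lower semicontinuity to pass to the limit. One small bookkeeping point: your Young-inequality variant $\int_X\gen{\nabla_H f,\nabla_H u}_H\,d\nu\leq\frac12\norm{f}^2+\frac12\norm{L_\nu u}^2$ with $\norm{L_\nu u}\leq 2\norm{f}$ yields $\sqrt{5/2}$ rather than $\sqrt2$; the paper instead writes the right-hand side as $\int_X f^2\,d\nu-\lambda\int_X fu\,d\nu\leq 2\norm{f}^2_{\elle^2(X,\nu)}$ using the already established bound $\norm{u}_{\elle^2(X,\nu)}\leq\lambda^{-1}\norm{f}_{\elle^2(X,\nu)}$, which gives the stated constant.
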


The paper is organized in the following way: in section \ref{Notations and preliminaries} we recall some basic definitions and we fix the notations. Section \ref{H-Moreau envelop} is dedicated to modify a standard tool in the theory of convex functions on Hilbert spaces: the Moreau--Yosida approximations (see \cite{BC11} and \cite{Bre73}). In Section \ref{Finite dimension properties} we recall known results about finite dimensional elliptic and parabolic equations that we will use. In Section \ref{The nabla_H U H-Lipschitz case} we study the case in which $\nabla_H U$ is a $H$-Lipschitz function. Then we prove that equation (\ref{Problema}) admits a strong solution in the following sense:

\begin{defn}\label{definizione strong solution}
A function $u\in\elle^2(X,\nu)$ is a \emph{strong solution of equation (\ref{Problema})} if there exists a sequence $\set{u_n}_{n\in\N}\subseteq \fcon^3_b(X)$ such that $u_n$ converges to $u$ in $\elle^2(X,\nu)$ and
\[\elle^2(X,\nu)\text{-}\lim_{n\ra+\infty}\lambda u_n-L_\nu u_n=f.\]
Moreover a sequence $\set{u_n}_{n\in\N}\subseteq\fcon^3_b(X)$ satisfying the above conditions is called a \emph{strong solution sequence for $u$}.
\end{defn}
\noindent
We conclude the section proving Theorem \ref{Main Theorem}. In Section \ref{A charactetization of the domain of Lnu} we will recall some results about the divergence operator on weighted Gaussian spaces and we will show that if $U$ satisfies Hypothesis \ref{ipotesi peso} and $\nabla_H U$ is $H$-Lipschitz, then
$D(L_\nu)= W^{2,2}(X,\nu)$ and
\[\norm{u}_{D(L_\nu)}\leq\norm{u}_{W^{2,2}(X,\nu)}\leq \pa{2+\sqrt{2}}\norm{u}_{D(L_\nu)},\]
where $\norm{\cdot}_{D(L_\nu)}$ is the graph norm in $D(L_\nu)$, i.e. for $u\in D(L_\nu)$
\begin{gather}\label{formula norma grafico}
\norm{u}_{D(L_\nu)}:=\norm{u}_{\elle^2(X,\nu)}+\norm{L_\nu u}_{\elle^2(X,\nu)}.
\end{gather}
See Section \ref{Notations and preliminaries} for the definition of $W^{2,2}(X,\nu)$.
In the final section we show how our results can be applied to some examples. In our examples $X$ will be $\con_0[0,1]=\set{f\in\con[0,1]\tc f(0)=0}$, endowed with the classical Wiener measure $P^W$. First we consider, for $f\in\con_0[0,1]$,
\[U(f)=\int_0^1 f^2(\xi)d\xi\]
and we show that $U$ is a weight bounded from below, satisfies Hypothesis \ref{ipotesi peso} and $\nabla_H U$ is $H$-Lipschitz. In Example \ref{A unbuonded weight} we consider the following function, for $f\in\con_0[0,1]$,
\[U(f)=F(f)+f(1)\]
where $F(f)=\max_{\xi\in[0,1]}f(\xi)$. We show that $U$ satisfies Hypothesis \ref{ipotesi peso} although it is unbounded, both from above and from below.

\section{Notations and preliminaries} \label{Notations and preliminaries}

We will denote by $X^*$ the topological dual of $X$. We recall that $X^*\subseteq\elle^2(X,\mu)$. The linear operator $R_\mu:X^*\ra X^{**}$ defined by the formula
\[R_\mu x^*(y^*)=\int_X x^*(x)y^*(x)d\mu(x)\]
is called the covariance operator of $\mu$. We denote by $X^*_\mu$ the closure of $X^*$ in $\elle^2(X,\mu)$. The covariance operator $R_\mu$ can be extended by continuity to the space $X^*_\mu$. By \cite[Lemma 2.4.1]{Bog98} for every $h\in H$ there exists a unique $g\in X^*_\mu$ with $h= R_\mu(g)$, in this case we set
\begin{gather}\label{definizione hat}
\hat{h}:=g.
\end{gather}

Throughout the paper we fix an orthonormal basis $\set{e_i}_{i\in\N}$ of $H$ such that $\hat{e}_i$ belongs to $X^*$, for every $i\in\N$. Such basis exists by \cite[Corollary 3.2.8(ii)]{Bog98}.

We say that a function $f:X\ra\R$ is \emph{differentiable along $H$ at $x$} if there is $v\in H$ such that
\[\lim_{t\ra 0}\frac{f(x+th)-f(x)}{t}=\gen{v,h}_H\qquad\text{ uniformly for }h\in H\text{ with }\abs{h}_H=1.\]
In this case the vector $v\in H$ is unique and we set $\nabla_H f(x):=v$, moreover for every $k\in\N$ the derivative of $f$ in the direction of $e_k$ exists and it is given by
\begin{gather}\label{partial derivative definition}
\partial_k f(x):=\lim_{t\ra 0}\frac{f(x+te_k)-f(x)}{t}=\gen{\nabla_H f(x),e_k}_H.
\end{gather}

We denote by $\mathcal{H}_2$ the space of the Hilbert--Schmidt operators in $H$, that is the space of the bounded linear operators $A:H\ra H$ such that $\norm{A}_{\mathcal{H}_2}^2=\sum_{i}\abs{Ae_i}^2_H$ is finite (see \cite{DU77}).
We say that a function $f:X\ra\R$ is \emph{two times differentiable along $H$ at $x$} if it is differentiable along $H$ at $x$ and $A\in\mathcal{H}_2$ exists such that
\[H\text{-}\lim_{t\ra 0}\frac{\nabla_Hf(x+th)-\nabla_Hf(x)}{t}=A h\qquad\text{ uniformly for }h\in H\text{ with }\abs{h}_H=1.\]
In this case the operator $A$ is unique and we set $\nabla_H^2 f(x):=A$. Moreover for every $i,j\in\N$ we set
\begin{gather}\label{partial derivative definition 2}
\partial_{ij} f(x):=\lim_{t\ra 0}\frac{\partial_jf(x+te_i)-\partial_jf(x)}{t}=\langle\nabla_H^2 f(x)e_j,e_i\rangle_H.
\end{gather}

For $k\in\N\cup\set{\infty}$, we denote by $\fcon^k_b(X)$ the space of the cylindrical function of the type
\(f(x)=\varphi(x^*_1(x),\ldots,x^*_n(x))\)
where $\varphi\in\con^{k}_b(\R^n)$ and $x^*_1,\ldots,x^*_n\in X^*$ and $n\in\N$. We remark that $\fcon^\infty_b(X)$ is dense in $\elle^p(X,\nu)$ for all $p\geq 1$ (see \cite[Proposition 3.6]{Fer15}). We recall that if $f\in \fcon^2_b(X)$, then $\partial_{ij}f(x)=\partial_{ji}f(x)$ for every $i,j\in\N$ and $x\in X$.

The Gaussian Sobolev spaces $W^{1,p}(X,\mu)$ and $W^{2,p}(X,\mu)$, with $p\geq 1$, are the completions of the \emph{smooth cylindrical functions} $\fcon_b^\infty(X)$ in the norms
\begin{gather*}
\norm{f}_{W^{1,p}(X,\mu)}:=\norm{f}_{\elle^p(X,\mu)}+\pa{\int_X\abs{\nabla_H f(x)}_H^pd\mu(x)}^{\frac{1}{p}};\\
\norm{f}_{W^{2,p}(X,\mu)}:=\norm{f}_{W^{1,p}(X,\mu)}+\pa{\int_X\norm{\nabla_H^2 f(x)}^p_{\mathcal{H}_2}d\mu(x)}^{\frac{1}{p}}.
\end{gather*}
Such spaces can be identified with subspaces of $\elle^p(X,\mu)$ and the (generalized) gradient and Hessian along $H$, $\nabla_H f$ and $\nabla_H^2 f$, are well defined and belong to $\elle^p(X,\mu;H)$ and $\elle^p(X,\mu;\mathcal{H}_2)$, respectively. For more informations see \cite[Section 5.2]{Bog98}.

Now we consider $\nabla_H:\fcon^\infty_b(X)\ra\elle^p(X,\nu;H)$. This operator is closable in the norm of $\elle^p(X,\nu)$ whenever $p> \frac{t-1}{t-2}$ and Hypothesis \ref{ipotesi peso} holds (see \cite[Definition 4.3]{Fer15}). For such $p$ we denote by $W^{1,p}(X,\nu)$ the domain of its closure in $\elle^p(X,\nu)$.

Assume Hypotesis \ref{ipotesi peso} holds. We shall use the integration by parts formula (see \cite[Lemma 4.1]{Fer15}) for $\varphi\in W^{1,p}(X,\mu)$ with $p>\frac{t-1}{t-2}$:
\begin{gather}\label{int by part}
\int_X\partial_k\varphi d\nu=\int_X\varphi(\partial_kU+\hat{e}_k)d\nu\qquad\text{ for every }k\in\N,
\end{gather}
where $\hat{e}_k$ is defined in formula \eqref{definizione hat}.

In order to define the spaces $W^{2,p}(X,\nu)$, we need to prove the closability of the operator $(\nabla_H,\nabla_H^2)$ in $\elle^p(X,\nu)$.
\begin{pro}\label{chiusura hessiano}
Assume Hypothesis \ref{ipotesi peso} holds. For every $p\geq\frac{t-1}{t-2}$, the operator 
\[(\nabla_H,\nabla^2_H):\fcon^\infty_b(X)\ra \elle^p(X,\nu;H)\times\elle^p(X,\nu;\mathcal{H}_2)\] 
is closable in $\elle^p(X,\nu)$. The closure will be still denoted by $(\nabla_H,\nabla^2_H)$.
\end{pro}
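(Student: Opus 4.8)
The plan is to prove closability by the standard duality/integration-by-parts route: show that the adjoint of $(\nabla_H,\nabla_H^2)$ has dense domain, or equivalently that if a sequence of smooth cylindrical functions $f_n \to 0$ in $\elle^p(X,\nu)$ while $(\nabla_H f_n, \nabla_H^2 f_n)$ converges in $\elle^p(X,\nu;H)\times \elle^p(X,\nu;\mathcal{H}_2)$ to some limit $(F,G)$, then necessarily $F=0$ and $G=0$. Closability of $\nabla_H$ alone in $\elle^p(X,\nu)$ for $p\geq \frac{t-1}{t-2}$ is already available from \cite{Fer15} (cited just before the proposition), so the content is that the second component $G$ vanishes as well. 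I would fix the orthonormal basis $\set{e_i}$ with $\hat e_i\in X^*$ and reduce everything to scalar components: it suffices to show that each $\partial_{ij} f_n$ tends to $0$ in the appropriate weak sense, and for this I would test against functions $\varphi \in \fcon^\infty_b(X)$.

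First I would establish a second-order integration-by-parts formula on the weighted space. Starting from the first-order formula \eqref{int by part}, applied twice, one gets for $g\in \fcon^\infty_b(X)$ and $\varphi\in\fcon^\infty_b(X)$
\[
\int_X \partial_{ij} g\,\varphi\, d\nu = \int_X \partial_j g\,\big(\partial_i U + \hat e_i\big)\varphi\, d\nu - \int_X \partial_j g\, \partial_i\varphi\, d\nu,
\]
where the right-hand side involves only first-order derivatives of $g$. The weight term $\partial_i U$ lies in $\elle^t(X,\nu)$ (since $U\in W^{1,t}(X,\mu)$ and $e^{-U}$ is bounded, or more carefully via the integrability coming from Hypothesis \ref{ipotesi peso}), and $\hat e_i\in X^*\subseteq\bigcap_{q}\elle^q(X,\nu)$; so by Hölder the right-hand side is controlled provided $\partial_j g\in \elle^{p}(X,\nu)$ with the conjugate exponent matching, which is exactly why the threshold $p\geq\frac{t-1}{t-2}$ appears (it makes $\frac{pt}{pt - p - t}\leq p$, i.e. the first-order terms times the weight stay in a dual space of something containing the test functions). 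Applying this identity to $g=f_n$ and passing to the limit: the left side $\int_X \partial_{ij}f_n\,\varphi\,d\nu \to \int_X \langle G e_j, e_i\rangle_H\,\varphi\, d\nu$, while on the right $\partial_j f_n \to F_j := \langle F,e_j\rangle_H$ in $\elle^p(X,\nu)$ by the already-known closability of $\nabla_H$, so the right side converges to $\int_X F_j(\partial_i U + \hat e_i)\varphi\, d\nu - \int_X F_j\,\partial_i\varphi\, d\nu$. But $F=0$ (again by closability of the first-order operator, since $f_n\to 0$), so the right-hand limit is $0$, forcing $\int_X \langle G e_j, e_i\rangle_H\,\varphi\, d\nu = 0$ for all $\varphi\in\fcon^\infty_b(X)$ and all $i,j$. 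Density of $\fcon^\infty_b(X)$ in $\elle^{p'}(X,\nu)$ then gives $\langle Ge_j,e_i\rangle_H = 0$ $\nu$-a.e. for every $i,j$, hence $G=0$ in $\elle^p(X,\nu;\mathcal{H}_2)$, which is the claim.

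The main obstacle I anticipate is purely bookkeeping on integrability exponents: one must verify that every product appearing after the two-fold integration by parts is genuinely integrable, keeping track of the exponent $t>3$, the exponent $p\geq\frac{t-1}{t-2}$, and the fact that $e^{-U}\in W^{1,r}(X,\mu)$ only for $r<t$ (so there is a small loss one must absorb, which is why $t>3$ rather than $t>2$ is needed — it guarantees $\frac{t-1}{t-2}<3$ and leaves room for the required Hölder chain). A secondary technical point is justifying that the first-order formula \eqref{int by part} may legitimately be iterated: applying it with $\varphi$ replaced by $(\partial_i U + \hat e_i)\varphi$ or by $\partial_i\varphi$ requires those functions to lie in $W^{1,q}(X,\mu)$ for a suitable $q>\frac{t-1}{t-2}$, which for a smooth cylindrical $g$ and $\varphi$ reduces to the regularity of $U$, i.e. again to Hypothesis \ref{ipotesi peso}; one should state this as a lemma or handle it by first proving the identity for $g,\varphi\in\fcon^\infty_b(X)$ where all terms are manifestly regular and only then pass to limits. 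Everything else is the standard closed-operator argument.
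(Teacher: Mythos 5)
Your proposal is correct and follows essentially the same route as the paper: the paper's proof also takes a sequence $\varphi_k\to 0$ with convergent first and second gradients, invokes \cite[Proposition 4.2]{Fer15} to get $F=0$, applies the first-order integration-by-parts formula \eqref{int by part} once to write $\int_X\psi\,\partial_{ij}\varphi_k\,d\nu$ in terms of first-order derivatives of $\varphi_k$ tested against $\psi(\partial_iU+\hat e_i)$ and $\partial_i\psi$, passes to the limit, and concludes $\Phi=0$ by density of $\fcon^\infty_b(X)$ in $\elle^{p'}(X,\nu)$. The integrability bookkeeping you flag as the main obstacle is handled implicitly in the paper exactly as you suggest (working first with smooth cylindrical functions, where all terms are manifestly regular), so there is nothing genuinely different to report.
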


\begin{proof}
Let $\set{\varphi_k}_{k\in\N}\subseteq \fcon^\infty_b(X)$ be such that $\varphi_k\ra 0$ in $\elle^p(X,\nu)$, $\nabla_H\varphi_k\ra F$ in $\elle^p(X,\nu;H)$, and $\nabla^2_H\varphi_k\ra\Phi$ in $\elle^p(X,\nu;\mathcal{H}_2)$ as $k\ra+\infty$. By \cite[Proposition 4.2]{Fer15} $F=0$ $\nu$-a.e.
Let $\psi\in\fcon^\infty_b(X)$, then by the integration by parts formula (formula \eqref{int by part}) we get
\[\int_X \psi\partial_{ij}\varphi_k d\nu=\int_X\psi\partial_j\varphi_k(\partial_i U+\hat{e}_i) d\nu-\int_X \partial_j\psi\partial_i\varphi_k d\nu.\]
We remark that
\begin{gather*}
\lim_{k\ra+\infty}\int_X \psi\partial_{ij}\varphi_k d\nu=\int_X \psi\gen{\Phi e_i,e_j}_H d\nu.\\
\intertext{Moreover}
\lim_{k\ra+\infty}\int_X \partial_j\psi\partial_i\varphi_k d\nu=0\qquad\text{and} \qquad\lim_{k\ra+\infty}\int_X\hat{e}_i\partial_j\varphi_k\psi d\nu=0.
\end{gather*}
Then $\int_X \psi\gen{\Phi e_i,e_j}_H d\nu=0$ for every $\psi\in\fcon^\infty_b(X)$.
So $\Phi = 0$ $\mu$-a.e. since $\fcon_b^\infty(X)$ is dense in $\elle^{p'}(X,\nu)$ (see \cite[Proposition 3.6]{Fer15}).
\end{proof}
\noindent
We are now able to define the Sobolev spaces $W^{2,p}(X,\nu)$.

\begin{defn}\label{definizione spazi di sobolev pesati}
Assume Hypothesis \ref{ipotesi peso} holds. For $p>\frac{t-1}{t-2}$ we denote by $W^{2,p}(X,\nu)$ the domain of closure of the operator $(\nabla_H,\nabla^2_H):\fcon_b^\infty(X)\ra \elle^p(X,\nu;H)\times\elle^p(X,\nu;\mathcal{H}_2)$ in $\elle^p(X,\nu)$.
\end{defn}
\noindent 
We remark that if $t>3$, i.e. when Hypothesis \ref{ipotesi peso} holds, then $2>\frac{t-1}{t-2}$.

We remind the reader that, if $U$ satisfies Hypothesis \ref{ipotesi peso} and belongs $W^{2,t}(X,\mu)$, where $t$ is the same as in Hypothesis \ref{ipotesi peso}, then by the integration by parts formula (formula \eqref{int by part}), and \cite[Proposition 5.3]{Fer15} (see also Proposition \ref{proposizione divergenza}) we get for every $u\in\fcon^2_b(X)$
\begin{gather}\label{formula Lnu}
L_\nu u=\sum_{i=1}^{+\infty}\partial_{ii}u-\sum_{i=1}^{+\infty}\pa{\partial_i U+\hat{e}_i}\partial_iu,
\end{gather}
where the series converges in $\elle^2(X,\nu)$.

Finally we recall the following corollary of the Hahn--Banach theorem (see \cite[Lemma 7.5]{AB06}).

\begin{pro}\label{minorante affine}
Let $g:X\ra\R\cup\set{+\infty}$ be a convex and lower semicontinuous function and let $r\in\R$ such that there is $x_0\in X$ with $g(x_0)>r$. Then there exists $x^*\in X^*$ such that for every $x\in X$
\[g(x)\geq x^*(x-x_0)+r.\]
\end{pro}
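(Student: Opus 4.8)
The plan is to view this as the geometric Hahn--Banach (strict separation) theorem applied to the epigraph $\operatorname{epi}(g):=\{(x,s)\in X\times\R : s\geq g(x)\}$. If $g\equiv +\infty$ one may take $x^*=0$, so assume $g$ is proper; then, $g$ being convex and lower semicontinuous, $\operatorname{epi}(g)$ is a nonempty closed convex subset of the Banach space $X\times\R$, and the hypothesis $g(x_0)>r$ says exactly that $(x_0,r)\notin\operatorname{epi}(g)$. Separating the compact singleton $\{(x_0,r)\}$ from the closed convex set $\operatorname{epi}(g)$ produces $(y^*,\beta)\in X^*\times\R$, not both zero, and $\delta>0$ such that
\[
y^*(x)+\beta s\ \geq\ y^*(x_0)+\beta r+\delta\qquad\text{for all }(x,s)\in\operatorname{epi}(g).
\]
Fixing any $x$ with $g(x)<+\infty$ and letting $s\to+\infty$ forces $\beta\geq 0$.

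In the principal case $\beta>0$ the conclusion drops out at once: taking $s=g(x)$ for $x\in\operatorname{dom}(g):=\{g<+\infty\}$ and rearranging gives
\[
g(x)\ \geq\ -\tfrac1\beta\,y^*(x-x_0)+r+\tfrac{\delta}{\beta}\ \geq\ -\tfrac1\beta\,y^*(x-x_0)+r,
\]
which also holds trivially when $g(x)=+\infty$; hence $x^*:=-\beta^{-1}y^*$ does the job. This case always occurs when $g(x_0)<+\infty$, because then $x_0\in\operatorname{dom}(g)$ and $\beta=0$ would give $0\geq\delta$ upon inserting $x=x_0$ into the separation inequality.

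The only delicate point is the degenerate case $\beta=0$, where the separating hyperplane is ``vertical'': it merely tells us that $y^*\neq 0$, that $g(x_0)=+\infty$, and that $y^*(x)\geq y^*(x_0)+\delta$ for every $x\in\operatorname{dom}(g)$. To repair it I would first obtain, by running the same separation argument on a point strictly below the graph --- say $(x_1,g(x_1)-1)$ for a fixed $x_1\in\operatorname{dom}(g)$, where the analogue of $\beta$ is now forced to be strictly positive --- some continuous affine minorant $\ell_0\leq g$ on $X$. Then, for $\lambda\geq 0$, the function $h_\lambda(x):=\ell_0(x)+\lambda\big(y^*(x_0)+\delta-y^*(x)\big)$ is continuous affine, still satisfies $h_\lambda\leq g$ on all of $X$ (on $\operatorname{dom}(g)$ because the bracket is $\leq 0$ there, and trivially elsewhere), and has $h_\lambda(x_0)=\ell_0(x_0)+\lambda\delta$. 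Combining this upward tilt with a downward shift by a suitable nonnegative constant yields a continuous affine $h\leq g$ with $h(x_0)=r$ exactly; writing $h(x)=x^*(x-x_0)+r$ with $x^*\in X^*$ its linear part completes the argument.

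I expect the bookkeeping in the $\beta=0$ case to be the main (and essentially the only) obstacle, the case $\beta>0$ being a one-line manipulation; indeed several textbooks avoid it altogether by assuming $x_0$ lies in the (interior of the) effective domain. A slicker alternative that sidesteps the case split is to translate so that $x_0=0$, $r=0$ and observe that $x^*\leq g$ is equivalent to $g^*(x^*)\leq 0$, while $\inf_{x^*\in X^*}g^*(x^*)=-g^{**}(0)=-g(0)<0$ by the Fenchel--Moreau theorem; but since Fenchel--Moreau rests on precisely the epigraph separation above, I would present the elementary version.
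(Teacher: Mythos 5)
Your proof is correct. Note, however, that the paper does not actually prove this proposition: it is stated as a recalled corollary of the Hahn--Banach theorem with a pointer to \cite[Lemma 7.5]{AB06}, so there is no internal proof to compare against. Your argument is the standard self-contained one --- strict separation of the point $(x_0,r)$ from the closed convex epigraph in $X\times\R$ --- and you correctly identify and handle the only genuinely delicate point, namely the vertical-hyperplane case $\beta=0$, which can occur only when $g(x_0)=+\infty$: the repair via an affine minorant $\ell_0\leq g$ obtained by separating $(x_1,g(x_1)-1)$ for some $x_1$ in the effective domain, tilted by $\lambda\bigl(y^*(x_0)+\delta-y^*(x)\bigr)$ and then shifted down so that the value at $x_0$ equals $r$, is sound, since $h_\lambda(x_0)=\ell_0(x_0)+\lambda\delta\to+\infty$ with $\delta>0$. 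The reduction to the proper case and the verification that $\beta>0$ whenever $x_0$ lies in the effective domain are also correct. The Fenchel--Moreau alternative you sketch is a legitimate shortcut but, as you note, rests on the same separation principle, so presenting the elementary version is the right call.
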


\section{Moreau--Yosida approximations along $H$} \label{H-Moreau envelop}

In this section we will modify a classical tool in the theory of convex functions in Hilbert spaces: the Moreau--Yosida approximations. For a classical treatment of the Moreau--Yosida approximations in Hilbert spaces we refer to \cite[Section 12.4]{BC11}.

Throughout this section $f:X\ra\R\cup\set{+\infty}$ is a convex and $\norm{\cdot}_X$-lower semicontinuous function.
Let $\alpha>0$ and define the \emph{Moreau--Yosida approximation along $H$} as
\begin{gather}\label{M env}
f_\alpha(x)=\inf\set{f(x+h)+\frac{1}{2\alpha}\abs{h}^2_H\tc h\in H}.
\end{gather}

\begin{pro}\label{proprieta g}
Let $x\in X$ and $\alpha>0$. The function $g_{\alpha,x}:H\ra\R$ defined as
\[g_{\alpha,x}(h)=f(x+h)+\frac{1}{2\alpha}\abs{h}^2_H,\]
is convex, $\abs{\cdot}_H$-lower semicontinuous and it has a unique global minimum point $P(x,\alpha)\in H$. Moreover
$g_{\alpha,x}$ is coercive, i.e.
\[\lim_{\abs{h}_H\ra +\infty}g_{\alpha, x}(h)=+\infty.\]
\end{pro}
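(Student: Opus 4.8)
The plan is to analyze the auxiliary function $g_{\alpha,x}$ directly, leveraging the strict convexity contributed by the quadratic term $\frac{1}{2\alpha}|h|_H^2$. First I would establish convexity and lower semicontinuity: since $f$ is convex and $\norm{\cdot}_X$-lower semicontinuous, and since the inclusion $H\hookrightarrow X$ is continuous, the map $h\mapsto f(x+h)$ is convex and $\abs{\cdot}_H$-lower semicontinuous on $H$; adding the continuous convex function $h\mapsto\frac{1}{2\alpha}\abs{h}_H^2$ preserves both properties. In fact, the quadratic term makes $g_{\alpha,x}$ strictly convex, which will give uniqueness of the minimizer once existence is known.

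Next I would prove coercivity. The key input is Proposition \ref{minorante affine}: applying it to $f$ (which is convex and $\norm{\cdot}_X$-lower semicontinuous, hence its restriction to the affine slice is too; but more directly we apply the affine minorization to $f$ itself), there exist $x^*\in X^*$ and a constant $c\in\R$ such that $f(y)\geq x^*(y)+c$ for all $y\in X$. Hence
\[
g_{\alpha,x}(h)\geq x^*(x+h)+c+\frac{1}{2\alpha}\abs{h}_H^2 = x^*(x)+c+x^*(h)+\frac{1}{2\alpha}\abs{h}_H^2.
\]
Now $x^*$ restricted to $H$ is a bounded linear functional (since $H\hookrightarrow X$ continuously and $x^*\in X^*$), so $\abs{x^*(h)}\leq M\abs{h}_H$ for some $M>0$, giving $g_{\alpha,x}(h)\geq \frac{1}{2\alpha}\abs{h}_H^2 - M\abs{h}_H + \text{const}$, which tends to $+\infty$ as $\abs{h}_H\to+\infty$.

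With coercivity and $\abs{\cdot}_H$-lower semicontinuity in hand, existence of a global minimizer follows from the direct method in the Hilbert space $H$: take a minimizing sequence $\{h_n\}$, which is bounded by coercivity, extract a weakly convergent subsequence $h_n\rightharpoonup P(x,\alpha)$ in $H$ (Banach--Alaoglu / reflexivity), and use that a convex $\abs{\cdot}_H$-lower semicontinuous function is weakly lower semicontinuous to conclude $g_{\alpha,x}(P(x,\alpha))\leq\liminf g_{\alpha,x}(h_n)=\inf g_{\alpha,x}$. Uniqueness then comes from strict convexity: if there were two distinct minimizers, the midpoint would have strictly smaller value, a contradiction.

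The main obstacle I anticipate is handling the lower semicontinuity correctly across the two topologies. The function $f$ is assumed $\norm{\cdot}_X$-lower semicontinuous, but the minimization runs over $h\in H$ with the $\abs{\cdot}_H$-topology, which is strictly finer than the $\norm{\cdot}_X$-topology restricted to $H$; so $\abs{\cdot}_H$-lower semicontinuity of $h\mapsto f(x+h)$ is immediate from $\norm{\cdot}_X$-lower semicontinuity, and that is the direction we need. One must also be slightly careful that the affine minorant from Proposition \ref{minorante affine} exists at all — it does, because $f$ is real-valued (never $+\infty$ on a nonempty set, in fact everywhere finite), so the hypothesis of that proposition is trivially met. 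Everything else is a routine application of the direct method of the calculus of variations in Hilbert space.
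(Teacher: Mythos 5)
Your proposal is correct and follows essentially the same route as the paper: convexity and $\abs{\cdot}_H$-lower semicontinuity via the continuous embedding $H\hookrightarrow X$, coercivity from the affine minorant supplied by Proposition \ref{minorante affine} (the paper merely rewrites $x^*|_H$ as $\gen{\cdot\,,h(x)}_H$ via Riesz representation, which is what your bound $\abs{x^*(h)}\leq M\abs{h}_H$ amounts to), existence by the direct method (the paper outsources this to \cite[Proposition 11.14]{BC11}), and uniqueness from the strict convexity of the quadratic term. No gaps.
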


\begin{proof}
Convexity is trivial. Let $H$-$\lim_{n\ra+\infty}h_n=h$. Since $H$ is continuously embedded in $X$, $X$-$\lim_{n\ra+\infty} h_n=h$. By the fact that $f$ is $\norm{\cdot}_X$-lower semicontinuous, we get
\[f(x+h)\leq\liminf_{n\ra+\infty}f(x+h_n).\]
So $g_{\alpha,x}$ is $\abs{\cdot}_H$-lower semicontinuous. By Proposition \ref{minorante affine}, for every $x\in X$ there exist $h(x)\in H$ and $\eta\in\R$ such that $f(x+h)\geq \gen{h,h(x)}_H+\eta$ for every $h\in H$. So we get
\begin{gather*}
\lim_{\abs{h}_H\ra+\infty}g_{\alpha,x}(h)=\lim_{\abs{h}_H\ra+\infty}\pa{f(x+h)+\frac{1}{2\alpha}\abs{h}_H^2}\geq\lim_{\abs{h}_H\ra+\infty}\pa{\gen{h,h(x)}_H+\eta+\frac{1}{2\alpha}\abs{h}_H^2}\geq\\
\geq \lim_{\abs{h}_H\ra+\infty}\pa{-\abs{h}_H\abs{h(x)}_H+\eta+\frac{1}{2\alpha}\abs{h}_H^2}=+\infty.
\end{gather*}
Since $g_{\alpha,x}$ is convex, $\abs{\cdot}_H$-lower semicontinuous and coercive,  the set
\[A_{\alpha, x}:=\set{p\in H\tc g_{\alpha,x}(p)=\inf \set{g_{\alpha,x}(h)\tc h\in H}}\]
is nonempty (see \cite[Proposition 11.14]{BC11}). We claim that $A_{\alpha,x}$ is a singleton. Indeed, by contradiction, assume that $p_1,p_2\in A_{\alpha,x}$ are such that $p_1\neq p_2$. Using the strict convexity of $\abs{\cdot}_H$
\begin{gather*}
g_{\alpha,x}\pa{\frac{p_1+p_2}{2}}=f\pa{x+\frac{p_1+p_2}{2}}+\frac{1}{2\alpha}\abs{\frac{p_1+p_2}{2}}^2_H<
\\
<\frac{1}{2}\pa{f(x+p_1)+\frac{1}{2\alpha}\abs{p_1}^2_H}+\frac{1}{2}\pa{f(x+p_2)+\frac{1}{2\alpha}\abs{p_2}^2_H}\leq\frac{1}{2}g_{\alpha,x}(p_1)+\frac{1}{2}g_{\alpha,x}(p_2)=\\
=\inf \set{g_{\alpha,x}(h)\tc h\in H},
\end{gather*}
a contradiction.
\end{proof}

\begin{pro}\label{Convergenza MY}
For every $x\in X$ we have $f_\alpha(x)\nearrow f(x)$ as $\alpha\ra 0^+$. In particular $f_\alpha(x)\leq f(x)$ for every $\alpha>0$ and $x\in X$.
\end{pro}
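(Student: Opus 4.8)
The plan is to prove the two claims separately: first monotonicity of $\alpha\mapsto f_\alpha(x)$, then the pointwise limit $f_\alpha(x)\to f(x)$ as $\alpha\to 0^+$, from which the inequality $f_\alpha(x)\le f(x)$ falls out immediately (taking $h=0$ in the infimum \eqref{M env} already gives $f_\alpha(x)\le f(x)$, so that part needs only one line).

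For monotonicity, I would fix $x\in X$ and take $0<\alpha_1<\alpha_2$. For every $h\in H$ we have $\frac{1}{2\alpha_2}\abs{h}_H^2\le\frac{1}{2\alpha_1}\abs{h}_H^2$, hence $f(x+h)+\frac{1}{2\alpha_2}\abs{h}_H^2\le f(x+h)+\frac{1}{2\alpha_1}\abs{h}_H^2$; taking the infimum over $h\in H$ yields $f_{\alpha_2}(x)\le f_{\alpha_1}(x)$, i.e. $f_\alpha(x)$ is nondecreasing as $\alpha$ decreases. Combined with the upper bound $f_\alpha(x)\le f(x)$, the limit $\ell(x):=\lim_{\alpha\to0^+}f_\alpha(x)$ exists in $\R\cup\{-\infty\}$ (well, in fact it is finite, being bounded below, see below) and satisfies $\ell(x)\le f(x)$. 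So the whole content is the reverse inequality $\ell(x)\ge f(x)$.

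For the reverse inequality I would use Proposition \ref{proprieta g}: for each $\alpha>0$ let $P(x,\alpha)\in H$ be the unique minimizer of $g_{\alpha,x}$, so that $f_\alpha(x)=f(x+P(x,\alpha))+\frac{1}{2\alpha}\abs{P(x,\alpha)}_H^2$. The first step is to bound $P(x,\alpha)$: using the affine minorant from Proposition \ref{minorante affine}, there are $h(x)\in H$ and $\eta\in\R$ with $f(x+h)\ge\gen{h,h(x)}_H+\eta$ for all $h\in H$; since $f_\alpha(x)\le f(x)$ we get $\gen{P(x,\alpha),h(x)}_H+\eta+\frac{1}{2\alpha}\abs{P(x,\alpha)}_H^2\le f(x)$, whence $\frac{1}{2\alpha}\abs{P(x,\alpha)}_H^2\le f(x)-\eta+\abs{P(x,\alpha)}_H\abs{h(x)}_H$. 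A routine quadratic estimate gives $\abs{P(x,\alpha)}_H\le C\alpha + C\sqrt{\alpha}$ for a constant $C$ depending on $x$, hence $\abs{P(x,\alpha)}_H\to 0$ as $\alpha\to0^+$; in particular $\frac{1}{2\alpha}\abs{P(x,\alpha)}_H^2$ is bounded, so $\ell(x)$ is finite. Then $x+P(x,\alpha)\to x$ in $H$, hence in $X$ (since $H\hookrightarrow X$ continuously), and by $\norm{\cdot}_X$-lower semicontinuity of $f$,
\[
f(x)\le\liminf_{\alpha\to0^+}f(x+P(x,\alpha))\le\liminf_{\alpha\to0^+}\pa{f(x+P(x,\alpha))+\frac{1}{2\alpha}\abs{P(x,\alpha)}_H^2}=\lim_{\alpha\to0^+}f_\alpha(x)=\ell(x).
\]
This gives $\ell(x)=f(x)$, and together with monotonicity we conclude $f_\alpha(x)\nearrow f(x)$.

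The main obstacle is the uniform-in-$\alpha$ bound $\abs{P(x,\alpha)}_H\to 0$: without it one cannot pass to the limit inside $f$, and the lower-semicontinuity argument collapses. The affine-minorant trick (Proposition \ref{minorante affine}) is exactly what makes this work, turning the a priori bound $f_\alpha(x)\le f(x)$ into the needed control on the minimizer; the rest is elementary. One should be slightly careful that $\eta$ and $h(x)$ depend on $x$ but not on $\alpha$, which is what makes the estimate uniform as $\alpha\to0^+$.
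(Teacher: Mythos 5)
Your strategy coincides with the paper's: monotonicity, the one\-/line bound $f_\alpha(x)\leq f(x)$, control of the minimizer $P(x,\alpha)$ via the affine minorant of Proposition \ref{minorante affine}, and $\norm{\cdot}_X$\-/lower semicontinuity of $f$ along $x+P(x,\alpha)\ra x$. When $f(x)<+\infty$ your proof is complete, and your direct quadratic estimate for $\abs{P(x,\alpha)}_H$ is a mild streamlining of the paper's two\-/step argument (the paper first gets $\sup_{\alpha\in(0,1)}\abs{P(x,\alpha)}_H=:c(x)<+\infty$ from the coercivity of $g_{1,x}$ and the inclusion of the minimizers in a sublevel set, and only then deduces $\abs{P(x,\alpha)}_H^2\leq 2\alpha\pa{S(x)+c(x)\abs{h(x)}_H-\eta}$).

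There is, however, a gap: $f$ is allowed to take the value $+\infty$, and your key estimate collapses when $f(x)=+\infty$. You derive $\frac{1}{2\alpha}\abs{P(x,\alpha)}_H^2\leq f(x)-\eta+\abs{P(x,\alpha)}_H\abs{h(x)}_H$ from $f_\alpha(x)\leq f(x)$, so your ``constant $C$ depending on $x$'' involves $f(x)$ and is infinite in that case; the conclusion $\abs{P(x,\alpha)}_H\ra 0$ is then unjustified, and is in fact false in general. Take $f$ to be the convex indicator of a closed convex set $C$ (equal to $0$ on $C$ and $+\infty$ elsewhere) and $x\notin C$ with $\set{h\in H\tc x+h\in C}\neq\emptyset$: then $P(x,\alpha)$ is the element of minimal $H$\-/norm of that set, independent of $\alpha$ and nonzero, while $f_\alpha(x)=\frac{1}{2\alpha}\abs{P(x,\alpha)}_H^2\ra+\infty=f(x)$ anyway. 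For the same reason your parenthetical claim that $\ell(x)$ is automatically finite is wrong: the monotone limit lives in $(-\infty,+\infty]$ and can equal $+\infty$. The repair is exactly the paper's case split: set $S(x):=\lim_{\alpha\ra 0^+}f_\alpha(x)$, note that if $S(x)=+\infty$ there is nothing to prove (since $S(x)\leq f(x)$), and otherwise run your quadratic estimate with $S(x)$ in place of $f(x)$, using $f_\alpha(x)\leq S(x)$; the rest of your argument then goes through unchanged.
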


\begin{proof}
Monotonicity of $f_\alpha$ is obvious. Let $S(x):=\lim_{\alpha\ra 0^+}f_\alpha(x)=\sup_{\alpha\in(0,1)}f_\alpha(x)$. Since $f_\alpha(x)\leq f(x)$ we have $S(x)\leq f(x)$. If $S(x)=+\infty$ then there is nothing to prove.

Assume $S(x)<+\infty$. We just need to prove that $S(x)\geq f(x)$. By monotonicity we get
\[\set{P(x,\alpha)\tc\alpha\in(0,1)}\subseteq\set{h\in H\tc g_{1,x}(h)\leq S(x)}.\]
By Proposition \ref{proprieta g} the set $\set{P(x,\alpha)\tc\alpha\in(0,1)}\subseteq H$ is bounded. Let
\[c(x)=\sup\set{\abs{P(x,\alpha)}_H\tc\alpha\in(0,1)}.\]
By Proposition \ref{minorante affine}, for every $x\in X$ there exist $h(x)\in H$ and $\eta\in\R$ such that $f(x+h)\geq\gen{h,h(x)}_H+\eta$ for every $h\in H$. Then, for every $\alpha\in (0,1)$, we have
\begin{gather*}
S(x)\geq f_\alpha(x)=f(x+P(x,\alpha))+\frac{1}{2\alpha}\abs{P(x,\alpha)}_H^2\geq \gen{P(x,\alpha),h(x)}_H+\eta+\frac{1}{2\alpha}\abs{P(x,\alpha)}_H^2\geq\\
\geq -\abs{P(x,\alpha)}_H\abs{h(x)}_H+\eta+\frac{1}{2\alpha}\abs{P(x,\alpha)}_H^2\geq -c(x)\abs{h(x)}_H+\eta+\frac{1}{2\alpha}\abs{P(x,\alpha)}_H^2.
\end{gather*}
Then $\abs{P(x,\alpha)}^2_H\leq 2\alpha(S(x)+c(x)\abs{h(x)}_H-\eta)$ and $\abs{P(x,\alpha)}_H\ra 0$ as $\alpha\ra 0^+$. Finally
\begin{gather*}
S(x)=\lim_{\alpha\ra 0^+}f_\alpha(x)=\lim_{\alpha\ra 0^+}f(x+P(x,\alpha))+\frac{1}{2\alpha}\abs{P(x,\alpha)}_H^2\geq\liminf_{\alpha\ra 0^+}f(x+P(x,\alpha))\geq f(x).
\end{gather*}
\end{proof}

\begin{pro}\label{caratterizzazione punto minimo}
For $x\in X$ and $\alpha>0$ let $P(x,\alpha)$ be the unique minimum point of the function $g_{\alpha,x}$, given by Proposition \ref{proprieta g}. For $p\in H$, we have $p=P(x,\alpha)$ if, and only if,
\begin{gather}\label{proprieta punto minimo}
f(x+p)\leq f(x+h)+\frac{1}{\alpha}\gen{p,h-p}_H,
\end{gather}
for every $h\in H$.
\end{pro}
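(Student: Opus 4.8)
The plan is to read \eqref{proprieta punto minimo} as the variational inequality characterizing the minimizer of the convex function $g_{\alpha,x}$ on the Hilbert space $H$, and to prove both implications by hand from convexity, avoiding subdifferential calculus (which would need extra care since $f$ is only extended--real--valued). Note first that, since Proposition \ref{proprieta g} asserts that $g_{\alpha,x}$ has a \emph{unique} global minimum point, $g_{\alpha,x}$ is not identically $+\infty$; hence $g_{\alpha,x}(P(x,\alpha))<+\infty$ and in particular $f(x+P(x,\alpha))<+\infty$, so the quantities below involving $p=P(x,\alpha)$ are finite (whenever $f(x+h)=+\infty$ the relevant inequalities hold trivially).

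For the implication ``\eqref{proprieta punto minimo} $\Ra$ $p=P(x,\alpha)$'', I would assume $p\in H$ satisfies \eqref{proprieta punto minimo} for all $h\in H$ and compute, for an arbitrary $h\in H$,
\[g_{\alpha,x}(h)-g_{\alpha,x}(p)=\pa{f(x+h)-f(x+p)}+\frac{1}{2\alpha}\pa{\abs{h}_H^2-\abs{p}_H^2}.\]
Bounding the first parenthesis from below by $-\frac1\alpha\gen{p,h-p}_H$ via \eqref{proprieta punto minimo}, and using the polarization identity $\abs{h}_H^2-\abs{p}_H^2=\abs{h-p}_H^2+2\gen{p,h-p}_H$, the two cross terms $\pm\frac1\alpha\gen{p,h-p}_H$ cancel and one is left with $g_{\alpha,x}(h)-g_{\alpha,x}(p)\ge\frac{1}{2\alpha}\abs{h-p}_H^2\ge0$. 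Thus $p$ is a global minimum point of $g_{\alpha,x}$, hence $p=P(x,\alpha)$ by the uniqueness in Proposition \ref{proprieta g}.

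For the converse, I would fix $p=P(x,\alpha)$, take any $h\in H$ and $t\in(0,1]$, and test minimality against the convex combination $p+t(h-p)=(1-t)p+th$. Writing $x+p+t(h-p)=(1-t)(x+p)+t(x+h)$ and using convexity of $f$ gives $f(x+p+t(h-p))\le(1-t)f(x+p)+tf(x+h)$; expanding $\abs{p+t(h-p)}_H^2=\abs{p}_H^2+2t\gen{p,h-p}_H+t^2\abs{h-p}_H^2$, the inequality $g_{\alpha,x}(p)\le g_{\alpha,x}(p+t(h-p))$ becomes, after cancelling $\frac1{2\alpha}\abs{p}_H^2$ and the finite term $(1-t)f(x+p)$ and dividing by $t>0$,
\[f(x+p)\le f(x+h)+\frac1\alpha\gen{p,h-p}_H+\frac{t}{2\alpha}\abs{h-p}_H^2.\]
Letting $t\to0^+$ yields \eqref{proprieta punto minimo}. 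I do not expect a genuine obstacle in this argument: the only point needing attention is the possibility $f(x+h)=+\infty$, which is harmless as noted above, and the rest is elementary Hilbert--space convexity.
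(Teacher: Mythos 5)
Your proposal is correct and follows essentially the same route as the paper: the ``only if'' direction is obtained by testing minimality of $g_{\alpha,x}$ against the convex combination $(1-t)p+th$, using convexity of $f$, dividing by $t$ and letting $t\to0^+$ (the paper's parameter $\beta$ is your $t$), and the ``if'' direction is the same completing-the-square computation showing $g_{\alpha,x}(h)-g_{\alpha,x}(p)\ge\frac{1}{2\alpha}\abs{h-p}_H^2$. Your explicit remarks about finiteness of $f(x+P(x,\alpha))$ and the harmlessness of $f(x+h)=+\infty$ are a small tidy addition the paper leaves implicit.
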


\begin{proof}
Let $\beta\in (0,1)$ and $h\in H$. Consider $p_\beta=\beta h+(1-\beta)P(x,\alpha)$ and observe that
\begin{gather*}
f_\alpha(x)=f(x+P(x,\alpha))+\frac{1}{2\alpha}\abs{P(x,\alpha)}_H^2\leq f(x+p_\beta)+\frac{1}{2\alpha}\abs{p_\beta}_H^2\leq\\
\leq \beta f(x+h)+(1-\beta)f(x+P(x,\alpha))+\frac{\beta^2}{2\alpha}\abs{h}^2_H
+\frac{\beta(1-\beta)}{\alpha}\gen{P(x,\alpha),h}_H+\frac{(1-\beta)^2}{2\alpha}\abs{P(x,\alpha)}_H^2.
\end{gather*}
Thus
\begin{gather*}
\beta f(x+P(x,\alpha))\leq \beta f(x+h)+\frac{\beta^2}{2\alpha}\abs{h}^2_H+\frac{\beta(1-\beta)}{\alpha}\gen{P(x,\alpha),h}_H+\frac{\beta(\beta-2)}{2\alpha}\abs{P(x,\alpha)}_H^2.
\end{gather*}
Dividing by $\beta$ we get
\begin{gather*}
f(x+P(x,\alpha))\leq f(x+h)+\frac{\beta}{2\alpha}\abs{h}^2_H+\frac{1-\beta}{\alpha}\gen{P(x,\alpha),h}_H+\frac{\beta-2}{2\alpha}\abs{P(x,\alpha)}_H^2,
\end{gather*}
and letting $\beta\ra 0^+$ we get
\[f(x+P(x,\alpha))\leq f(x+h)+\frac{1}{\alpha}\gen{P(x,\alpha),h-P(x,\alpha)}_H.\]

Conversely, observe that if $p\in H$ satisfies inequality \eqref{proprieta punto minimo}, then for every $h\in H$ we have
\begin{gather*}
f(x+p)+\frac{1}{2\alpha}\abs{p}_H^2\leq f(x+h)+\frac{1}{\alpha}\gen{p,h-p}_H+\frac{1}{2\alpha}\abs{p}_H^2\leq\\
\leq f(x+h)+\frac{1}{\alpha}\gen{p,h-p}_H+\frac{1}{2\alpha}\abs{p}_H^2+\frac{1}{2\alpha}\abs{h-p}_H^2=f(x+h)+\frac{1}{2\alpha}\abs{h}_H^2.
\end{gather*}
\end{proof}

\begin{pro}\label{punto di minimo lip}
Let $x\in X$ and $\alpha>0$. The function $P_{x,\alpha}:H\ra H$ defined as $P_{x,\alpha}(h):=P(x+h,\alpha)$ is Lipschitz continuous, with Lipschitz constant less or equal than $1$.
\end{pro}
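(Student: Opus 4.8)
The plan is to exploit the variational characterization of the minimum point established in Proposition~\ref{caratterizzazione punto minimo}, which is precisely the statement that the proximal map is monotone (in fact firmly nonexpansive). Fix $x\in X$, $\alpha>0$ and $h_1,h_2\in H$, and write $p_1:=P_{x,\alpha}(h_1)=P(x+h_1,\alpha)$ and $p_2:=P_{x,\alpha}(h_2)=P(x+h_2,\alpha)$. Applying Proposition~\ref{caratterizzazione punto minimo} at the point $x+h_1$ gives
\[f(x+h_1+p_1)\leq f(x+h_1+k)+\frac{1}{\alpha}\gen{p_1,k-p_1}_H\qquad\text{for every }k\in H,\]
and at the point $x+h_2$
\[f(x+h_2+p_2)\leq f(x+h_2+k)+\frac{1}{\alpha}\gen{p_2,k-p_2}_H\qquad\text{for every }k\in H.\]

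The one idea that matters is the choice of test vectors. In the first inequality I would take $k=p_2+h_2-h_1\in H$, so that $x+h_1+k=x+h_2+p_2$; in the second, $k=p_1+h_1-h_2\in H$, so that $x+h_2+k=x+h_1+p_1$. Adding the two resulting inequalities, the values of $f$ cancel, and multiplying through by $\alpha>0$ leaves
\[0\leq\gen{p_1,\,p_2+h_2-h_1-p_1}_H+\gen{p_2,\,p_1+h_1-h_2-p_2}_H.\]
Expanding the right-hand side and using $2\gen{p_1,p_2}_H-\abs{p_1}_H^2-\abs{p_2}_H^2=-\abs{p_1-p_2}_H^2$, this reduces to
\[\abs{p_1-p_2}_H^2\leq\gen{p_1-p_2,\,h_2-h_1}_H\leq\abs{p_1-p_2}_H\abs{h_1-h_2}_H,\]
the last step by the Cauchy--Schwarz inequality. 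Dividing by $\abs{p_1-p_2}_H$ (the case $p_1=p_2$ being trivial) yields $\abs{P_{x,\alpha}(h_1)-P_{x,\alpha}(h_2)}_H\leq\abs{h_1-h_2}_H$, which is the assertion.

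I do not anticipate any real obstacle: the only point to check is that the substituted vectors $k$ are legitimate, i.e. arbitrary elements of $H$, which is immediate since $H$ is a vector space and $p_i,h_i\in H$; the rest is the short algebraic manipulation indicated above. (The same computation actually shows that $P_{x,\alpha}$ is firmly nonexpansive, but only the Lipschitz bound with constant $1$ is needed later.)
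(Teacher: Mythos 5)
Your proposal is correct and follows essentially the same route as the paper: both apply the variational characterization of Proposition~\ref{caratterizzazione punto minimo} at the two base points with cross-substituted test vectors, sum so the $f$-values cancel, and conclude via the identity $2\gen{p_1,p_2}_H-\abs{p_1}_H^2-\abs{p_2}_H^2=-\abs{p_1-p_2}_H^2$ and Cauchy--Schwarz. The only cosmetic difference is that the paper compares $P(x,\alpha)$ with $P(x+h,\alpha)$ (i.e.\ takes $h_1=0$), which suffices since $x$ is arbitrary.
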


\begin{proof}
Let $\alpha>0$, $x\in X$ and $h\in H$. By Proposition \ref{caratterizzazione punto minimo} we get
\begin{gather*}
f(x+P(x,\alpha))\leq f(x+h+P(x+h,\alpha))+\frac{1}{\alpha}\gen{P(x,\alpha),h+P(x+h,\alpha)-P(x,\alpha)}_H;\\
f(x+h+P(x+h,\alpha))\leq f(x+P(x,\alpha))+\frac{1}{\alpha}\gen{P(x+h,\alpha),P(x,\alpha)-h-P(x+h,\alpha)}_H.
\end{gather*}
Summing these inequalities and multiplying by $\alpha$ we get
\begin{gather*}
0\leq \gen{P(x,\alpha),h+P(x+h,\alpha)-P(x,\alpha)}_H+\gen{P(x+h,\alpha),P(x,\alpha)-h-P(x+h,\alpha)}_H=\\
=-\abs{P(x+h,\alpha)-P(x,\alpha)}_H^2+\gen{P(x,\alpha)-P(x+h,\alpha),h}_H.
\end{gather*}
Using the Cauchy--Schwartz inequality we get
\[\abs{P(x+h,\alpha)-P(x,\alpha)}_H^2\leq \gen{P(x,\alpha)-P(x+h,\alpha),h}_H\leq \abs{P(x+h,\alpha)-P(x,\alpha)}_H\abs{h}_H.\]
So $\abs{P(x+h,\alpha)-P(x,\alpha)}_H\leq \abs{h}_H$.
\end{proof}

\begin{pro}\label{MY differentiable}
Let $\alpha>0$. $f_\alpha$ is differentiable along $H$ at every point $x\in X$. Moreover, for every $x\in X$, we have
\[\nabla_H f_\alpha(x)=-\frac{1}{\alpha}P(x,\alpha).\]
\end{pro}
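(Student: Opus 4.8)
The plan is to sandwich the difference quotient $\frac{f_\alpha(x+th)-f_\alpha(x)}{t}$ between two expressions that both converge to $-\frac{1}{\alpha}\gen{P(x,\alpha),h}_H$ as $t\to 0$, uniformly over $h\in H$ with $\abs{h}_H=1$, and then appeal to the definition of differentiability along $H$ together with uniqueness of the gradient. For the \emph{upper} bound I would test the infimum defining $f_\alpha(x+th)$ with the competitor $k=P(x,\alpha)-th\in H$; since $(x+th)+k=x+P(x,\alpha)$ and $P(x,\alpha)$ is the minimizer provided by Proposition \ref{proprieta g}, this yields
\[f_\alpha(x+th)\le f(x+P(x,\alpha))+\frac{1}{2\alpha}\abs{P(x,\alpha)-th}_H^2=f_\alpha(x)-\frac{t}{\alpha}\gen{P(x,\alpha),h}_H+\frac{t^2}{2\alpha}\abs{h}_H^2.\]

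For the \emph{lower} bound I would instead test the infimum defining $f_\alpha(x)$ with the competitor $k=th+P(x+th,\alpha)\in H$; since $x+k=(x+th)+P(x+th,\alpha)$, this gives
\[f_\alpha(x)\le f\bigl((x+th)+P(x+th,\alpha)\bigr)+\frac{1}{2\alpha}\abs{th+P(x+th,\alpha)}_H^2=f_\alpha(x+th)+\frac{t}{\alpha}\gen{P(x+th,\alpha),h}_H+\frac{t^2}{2\alpha}\abs{h}_H^2,\]
so that $f_\alpha(x+th)-f_\alpha(x)\ge-\frac{t}{\alpha}\gen{P(x+th,\alpha),h}_H-\frac{t^2}{2\alpha}\abs{h}_H^2$. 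Here I would invoke Proposition \ref{punto di minimo lip} in the form $\abs{P(x+th,\alpha)-P(x,\alpha)}_H\le\abs{th}_H=\abs{t}\abs{h}_H$ to replace $P(x+th,\alpha)$ by $P(x,\alpha)$ at the cost of an error term bounded by $\frac{\abs{t}}{\alpha}\abs{h}_H^2$.

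Combining the two estimates and dividing by $t$ (minding the sign of $t$, which swaps the roles of the two one-sided inequalities; alternatively one reduces the case $t<0$ to the case $t>0$ by replacing $h$ with $-h$), one gets, for $\abs{h}_H=1$,
\[\left|\frac{f_\alpha(x+th)-f_\alpha(x)}{t}+\frac{1}{\alpha}\gen{P(x,\alpha),h}_H\right|\le\frac{3\abs{t}}{2\alpha},\]
which tends to $0$ uniformly in $h$ as $t\to 0$. Since $-\frac{1}{\alpha}P(x,\alpha)\in H$, this is exactly the statement that $f_\alpha$ is differentiable along $H$ at $x$ with $\nabla_H f_\alpha(x)=-\frac{1}{\alpha}P(x,\alpha)$.

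I do not expect a genuine obstacle here: the substantive inputs are just the two competitor choices (which are the natural ones) and the nonexpansiveness of $x\mapsto P(x,\alpha)$ from Proposition \ref{punto di minimo lip}. The only point requiring care is the bookkeeping with the sign of $t$ and the factor in front of $\abs{t}$, which is routine.
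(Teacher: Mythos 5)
Your proof is correct, and it follows the same overall strategy as the paper's: sandwich the increment $f_\alpha(x+th)-f_\alpha(x)$ between two expressions differing from $-\frac{t}{\alpha}\gen{P(x,\alpha),h}_H$ by $O(t^2)$, then use the nonexpansiveness of $P(\cdot,\alpha)$ from Proposition \ref{punto di minimo lip} to make the error uniform in $h$. The one genuine difference is how you obtain the two one-sided bounds: you test the inf-convolution directly with the explicit competitors $P(x,\alpha)-th$ and $th+P(x+th,\alpha)$, which requires nothing beyond the definition of $f_\alpha$ and the fact that $P(x,\alpha)$ attains the infimum (Proposition \ref{proprieta g}); the paper instead derives both bounds from the variational inequality of Proposition \ref{caratterizzazione punto minimo}, obtaining $-\frac{1}{\alpha}\gen{P(x,\alpha),h}_H\leq f_\alpha(x+h)-f_\alpha(x)\leq-\frac{1}{\alpha}\gen{P(x+h,\alpha),h}_H$ with no quadratic remainder, and then applies the Lipschitz estimate once to compare the two inner products. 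Your route is slightly more elementary within this proof (it bypasses Proposition \ref{caratterizzazione punto minimo}, though that proposition is still needed upstream to prove Proposition \ref{punto di minimo lip}), at the cost of a marginally worse constant ($\frac{3\abs{t}}{2\alpha}$ versus the paper's $\frac{\abs{t}}{\alpha}$), which is immaterial. Your handling of the sign of $t$ is also correct.
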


\begin{proof}
By Proposition \ref{caratterizzazione punto minimo}, for every $\alpha>0$ and $h\in H$, we get
\begin{gather*}
f_\alpha(x+h)-f_\alpha(x)=\\
=f(x+h+P(x+h,\alpha))+\frac{1}{2\alpha}\abs{P(x+h,\alpha)}_H^2-f(x+P(x,\alpha))-\frac{1}{2\alpha}\abs{P(x,\alpha)}_H^2\geq\\
\geq \frac{1}{2\alpha}\abs{P(x+h,\alpha)}_H^2-\frac{1}{\alpha}\gen{P(x,\alpha),h+P(x+h,\alpha)-P(x,\alpha)}_H-\frac{1}{2\alpha}\abs{P(x,\alpha)}_H^2=\\
=\frac{1}{2\alpha}\abs{P(x+h,\alpha)-P(x,\alpha)}_H^2-\frac{1}{\alpha}\gen{P(x,\alpha),h}_H\geq -\frac{1}{\alpha}\gen{P(x,\alpha),h}_H.
\end{gather*}
In a similar way, for every $\alpha>0$ and $h\in H$, we have
\[f_\alpha(x+h)-f_\alpha(x)\leq -\frac{1}{\alpha}\gen{P(x+h,\alpha),h}_H.\]
Combining these inequalities and applying Proposition \ref{punto di minimo lip} we get, for every $\alpha>0$ and $h\in H$,
\begin{gather*}
0\leq f_\alpha(x+h)-f_\alpha(x)+\frac{1}{\alpha}\gen{P(x,\alpha),h}_H\leq \frac{1}{\alpha}\gen{P(x,\alpha)-P(x+h,\alpha),h}_H\leq \\
\leq \frac{1}{\alpha}\abs{P(x,\alpha)-P(x+h,\alpha)}_H\abs{h}_H\leq \frac{1}{\alpha}\abs{h}_H^2.
\end{gather*}
So, for every $\alpha>0$, $f_\alpha$ is differentiable along $H$ at every point $x\in X$ and $\nabla_H f_\alpha(x)=-\frac{1}{\alpha}P(x,\alpha)$.
\end{proof}

\begin{pro}\label{Moreau in W22}
Let $\alpha>0$ and $1\leq p<+\infty$. If $f\in\elle^p(X,\mu)$, then $f_\alpha \in W^{2,p}(X,\mu)$.
\end{pro}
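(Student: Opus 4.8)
The plan is to exhibit $f_\alpha$ as a member of $W^{2,p}(X,\mu)$ by combining three facts already available: $f_\alpha$ is everywhere $H$-differentiable with $\nabla_H f_\alpha(x) = -\frac1\alpha P(x,\alpha)$ (Proposition \ref{MY differentiable}), the map $h\mapsto P(x+h,\alpha)$ is $1$-Lipschitz on $H$ (Proposition \ref{punto di minimo lip}), and $0\le f_\alpha\le f$ in the sense $|f_\alpha|\le$ something $\mu$-integrable (Proposition \ref{Convergenza MY} gives $f_\alpha\le f$, and a lower affine bound from Proposition \ref{minorante affine} controls $f_\alpha$ from below). First I would show $f_\alpha\in\elle^p(X,\mu)$: from $f_\alpha\le f\in\elle^p(X,\mu)$ and the estimate $\abs{P(x,\alpha)}_H^2\le 2\alpha(f(x)-\langle\text{affine bound}\rangle)$ obtained exactly as in the proof of Proposition \ref{Convergenza MY}, one gets $f_\alpha(x)\ge f(x+P(x,\alpha))+\frac{1}{2\alpha}\abs{P(x,\alpha)}_H^2$ bounded below by an affine (hence $\elle^p$) function of $x$, so $f_\alpha\in\elle^p(X,\mu)$.

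Next I would control $\nabla_H f_\alpha = -\frac1\alpha P(\cdot,\alpha)$ in $\elle^p(X,\mu;H)$. Since $h\mapsto P(x+h,\alpha)$ is $1$-Lipschitz, $\abs{P(y,\alpha)-P(y',\alpha)}_H\le\abs{y-y'}_H$ whenever $y-y'\in H$; together with the pointwise bound $\abs{P(x,\alpha)}_H\le\sqrt{2\alpha}\,(f(x)-a(x))^{1/2}$ for an affine $a$, and the fact that $(f-a)^{1/2}\in\elle^{p}(X,\mu)$ fails in general — so instead I would use $\abs{P(x,\alpha)}_H\le\abs{P(x_0,\alpha)}_H+\abs{x-x_0}_H$ only formally; the cleaner route is: fix any $h_0\in H$ and write $\abs{P(x,\alpha)}_H\le\abs{P(x- s\,?,\cdot)}$ — rather, use that $f_\alpha$ is $\frac1\alpha$-Lipschitz along $H$ (shown inside the proof of Proposition \ref{MY differentiable}: $0\le f_\alpha(x+h)-f_\alpha(x)+\frac1\alpha\langle P(x,\alpha),h\rangle_H\le\frac1\alpha\abs{h}_H^2$), which forces $\abs{\nabla_H f_\alpha(x)}_H\le \frac1\alpha\abs{P(x,\alpha)}_H$; and $\frac{1}{2\alpha}\abs{P(x,\alpha)}_H^2\le f(x)-f_\alpha(x)+\frac{1}{2\alpha}\abs{P(x,\alpha)}_H^2\le f(x)-a(x)$, so $\abs{\nabla_H f_\alpha(x)}_H^p\le(2\alpha)^{-p/2}\alpha^{-p}\,(f(x)-a(x))^{p/2}$ — still only $\elle^2$-type control. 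To get genuine $\elle^p$ control I would instead argue directly from Lipschitzness: $\nabla_H f_\alpha = -\frac1\alpha P(\cdot,\alpha)$ and $x\mapsto P(x,\alpha)$ is a pointwise limit, along $H$-directions, with $H$-Lipschitz constant $1$ in each fixed direction, hence $\nabla_H^2 f_\alpha(x) = -\frac1\alpha D_H P(x,\alpha)$ exists $\mu$-a.e. by Rademacher's theorem for $H$-Lipschitz functions on Wiener space and satisfies $\norm{\nabla_H^2 f_\alpha(x)}_{\mathcal H_2}\le\frac1\alpha$ — wait, the Hilbert–Schmidt norm need not be bounded by the Lipschitz (operator-norm) bound in infinite dimensions. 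So the genuinely delicate point surfaces here.

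The main obstacle, therefore, is the second derivative: $\nabla_H^2 f_\alpha = -\frac1\alpha \nabla_H P(\cdot,\alpha)$ is a priori only a bounded operator (operator norm $\le\frac1\alpha$), not Hilbert–Schmidt. I expect this is handled by a finite-dimensional approximation together with an integral estimate: approximate $f$ by cylindrical convex functions $f\circ\Pi_n$ (conditional expectations / projections onto $\linspan\{e_1,\dots,e_n\}$), for which $(f\circ\Pi_n)_\alpha$ is cylindrical, $C^{1,1}$ on $\R^n$, and by Proposition \ref{Moreau in W22}'s finite-dimensional analogue lies in $W^{2,p}$ of the Gaussian $\gamma_n$ with a bound on $\int\norm{D^2(f\circ\Pi_n)_\alpha}_{\mathcal H_2}^p\,d\gamma_n$ that is \emph{uniform in $n$} — the uniformity coming from the Gaussian integration-by-parts (the Hessian of a convex function is controlled in $\elle^p(\gamma_n)$ by the gradient via the Meyer/Gaussian Calderón–Zygmund inequalities, $\norm{D^2 g}_{\elle^p(\gamma_n)}\le C_p\norm{L g}_{\elle^p(\gamma_n)}$, and for the Moreau envelope $L(f\circ\Pi_n)_\alpha$ is controlled through $\Delta(f\circ\Pi_n)_\alpha\le \frac{n}{\alpha}$ combined with the first-order terms). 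Then I would pass to the limit $n\to\infty$: $(f\circ\Pi_n)_\alpha\to f_\alpha$ and $\nabla_H(f\circ\Pi_n)_\alpha\to\nabla_H f_\alpha$ in $\elle^p(X,\mu)$ using Proposition \ref{punto di minimo lip}-type stability of the minimizers and dominated convergence, while the uniform $W^{2,p}$ bound gives weak compactness of the Hessians in $\elle^p(X,\mu;\mathcal H_2)$; identifying the weak limit with $\nabla_H^2 f_\alpha$ via the closability Proposition \ref{chiusura hessiano} (in the Gaussian, i.e.\ $U\equiv 0$, case) shows $f_\alpha\in W^{2,p}(X,\mu)$. The crux is thus the $n$-uniform Hilbert–Schmidt Hessian bound, which rests on the Gaussian second-order Sobolev (Meyer) inequality applied to the semiconcave function $(f\circ\Pi_n)_\alpha$ whose Hessian is bounded above by $\frac1\alpha I$, so $\norm{D^2(f\circ\Pi_n)_\alpha}_{\mathcal H_2}^2 = \sum_i|\lambda_i|^2$ with $\lambda_i\le\frac1\alpha$ and $\sum_i\lambda_i = \Delta(f\circ\Pi_n)_\alpha$ controlled in $\elle^{p}$ by $\elle^{p}$-bounds on $L_{\gamma}(f\circ\Pi_n)_\alpha = \Delta(f\circ\Pi_n)_\alpha - \langle \xi,\nabla(f\circ\Pi_n)_\alpha\rangle$, the latter being exactly what maximal $\elle^p$-regularity for the Ornstein–Uhlenbeck operator provides uniformly in dimension.
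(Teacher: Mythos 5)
You have correctly isolated the real difficulty: the pointwise bound $\norm{\nabla_H^2 f_\alpha(x)}_{\mathcal{L}(H)}\leq\frac{1}{\alpha}$ coming from the $\frac{1}{\alpha}$-Lipschitzianity of $\nabla_H f_\alpha$ does not by itself give integrability of the Hilbert--Schmidt norm. But your proposal does not close this gap, and the paper does not need the machinery you sketch. The paper's proof is essentially one line: by Proposition \ref{MY differentiable}, $\nabla_H f_\alpha=-\frac{1}{\alpha}P(\cdot,\alpha)$, and by Proposition \ref{punto di minimo lip} this is an $H$-Lipschitz map from $X$ into the separable Hilbert space $H$; then \cite[Theorem 5.11.2]{Bog98} (the same result invoked repeatedly elsewhere in the paper) states that every such map belongs to $W^{1,q}(X,\mu;H)$ for every $q\geq 1$, \emph{with derivative in} $\elle^q(X,\mu;\mathcal{H}_2)$. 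That theorem is precisely the ``delicate point'' packaged as a known result; heuristically, the requirement that $P(\cdot,\alpha)$ be a globally defined measurable map from $X$ (not merely from $H$) into $H$ is what forces the derivative to be Hilbert--Schmidt almost everywhere, and the membership of $f_\alpha$ itself in $\elle^p$ follows from $f_\alpha\leq f$ together with the affine minorant of Proposition \ref{minorante affine}.

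Two concrete problems with your substitute argument. First, the finite-dimensional scheme is not actually uniform in $n$: the bound $\Delta(f\circ\Pi_n)_\alpha\leq\frac{n}{\alpha}$ is dimension-dependent, and the step where $\norm{L_{\gamma}(f\circ\Pi_n)_\alpha}_{\elle^p}$ is ``controlled uniformly in $n$'' is asserted rather than proved --- Meyer's inequalities bound $\norm{D^2 g}_{\elle^p}$ \emph{by} $\norm{Lg}_{\elle^p}$, so obtaining a uniform bound on the latter is essentially equivalent to the uniform $W^{2,p}$ estimate you are trying to establish; as written the argument is circular. Second, a minor point: your worry about the first derivative is unfounded, since $\frac{1}{2\alpha}\abs{P(x,\alpha)}_H^2\leq f(x)-a(x)$ with $a$ affine gives $\abs{\nabla_H f_\alpha}_H\in\elle^{2p}(X,\mu)\subseteq\elle^p(X,\mu)$ directly (and in any case the $H$-Lipschitz property of $P(\cdot,\alpha)$ already yields this via the Fernique-type integrability in \cite[Theorem 5.11.2]{Bog98}). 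So the proposal identifies the right obstacle but leaves it unresolved; the intended proof replaces your entire second half by a single citation.
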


\begin{proof}
By Proposition \ref{MY differentiable} we get, for every $\alpha>0$ and $x\in X$,
\[\nabla_H f_\alpha(x)=-\frac{1}{\alpha}P(x,\alpha).\]
Proposition \ref{punto di minimo lip} and \cite[Theorem 5.11.2]{Bog98} give us that for every $\alpha>0$, $\nabla_H f_\alpha\in W^{1,q}(X,\mu,H)$ for every $q\geq 1$. The conclusion follows from the inequality $f_\alpha(x)\leq f(x)$ for every $\alpha>0$ and $x\in X$ (Proposition \ref{Convergenza MY}).
\end{proof}

\section{Finite dimensional results} \label{Finite dimension properties}

In this section we recall some known finite dimensional results about the operator
\begin{gather}\label{formula L finito dim}
L_\phi \psi=\sum_{i=1}^nD_{ii}\psi-\sum_{i=1}^n (D_i\phi+\xi_i)D_i\psi,
\end{gather}
where $\phi$ is a convex function with Lipschitz continuous gradient, and $\psi\in \con^2_b(\R^n)$. We mainly refer to the results in \cite{BF04}.
We need a dimension-free uniform estimate for $u$ and $\grad u$, where $u$ is a solution of
\begin{gather}\label{prob_finito_ellittico}
\lambda u(\xi)- L_\phi u(\xi)= f(\xi)\qquad \xi\in\R^n,
\end{gather}
where $\lambda>0$ and $f$ is a bounded $\gamma$-H\"older continuous function, for some $0<\gamma<1$.
Recall that the space $\con_b^{k+\gamma}(\R^n)$, for $k\in\N\cup\set{0}$ and $0<\gamma< 1$, is the space of the $k$-differentiable functions with bounded and $\gamma$-H\"older derivatives up to the order $k$, endowed with its standard norm (see \cite[Section 2.7]{Tri95}), i.e. for $f\in\con_b^{k+\gamma}(\R^n)$ we let $\norm{f}_{\con_b^{k+\gamma}(\R^n)}=\norm{f}_{\con_b^k(\R^n)}+[D^kf]_\gamma$ where
\[[D^kf]_\gamma=\sum_{\abs{\beta}=k}\sup\set{\frac{\abs{D^{\beta}f(\xi_1)-D^{\beta}f(\xi_2)}}{\abs{\xi_1-\xi_2}^\gamma}\tc \xi_1,\xi_2\in\R^n,\ \xi_1\neq\xi_2}.\]
Also the space $\con^{k+\beta,m+\gamma}(A\times\R^n)$ for $k,m\in\N\cup \set{0}$, $0<\beta,\gamma<1$ and $A$ an open subset of $\R$ is the space of $k$-differentiable functions with $\beta$-H\"older derivatives up to the order $k$ in the first variable and $m$-differentiable functions with $\gamma$-H\"older derivatives up to the order $m$ in the second variable. As usual when we add the subscript \emph{loc} we mean that the H\"older condition holds locally.

The following result will be useful.

\begin{pro}\label{Teo holder}
Let $0<\gamma<1$ and assume that $\phi$ has a Lipschitz continuous gradient. For every $f\in\con_b^\gamma(\R^n)$ equation \eqref{prob_finito_ellittico} has a unique solution $u\in\con_b^{2+\gamma}(\R^n)$, and there exists a constant $C>0$, independent of $f$, such that
\begin{gather}\label{stima holder}
\norm{u}_{\con_b^{2+\gamma}(\R^n)}\leq C\norm{f}_{\con_b^\gamma(\R^n)}.
\end{gather}
Moreover if $\phi,f\in\con^\infty(\R^n)$, then $u\in\con^\infty(\R^n)$.
\end{pro}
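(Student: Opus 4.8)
The plan is to obtain existence, uniqueness and the $\con_b^0$-- and $\con_b^\gamma$--estimates from the transition semigroup of the diffusion generated by $L_\phi$, and then to upgrade the solution to $\con_b^{2+\gamma}$ and prove \eqref{stima holder} by the Schauder theory for elliptic operators with unbounded, but regular and dissipative, drift --- which is precisely the setting of \cite{BF04}, so that step will mostly be a citation once I check that the hypotheses match.

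Write $b(\xi)=-\pa{\grad\phi(\xi)+\xi}$, so that $L_\phi\psi=\Delta\psi+\gen{b,\grad\psi}$. The assumption that $\grad\phi$ is Lipschitz makes $b$ globally Lipschitz, and convexity of $\phi$ makes $b$ strictly dissipative: $\gen{b(\xi)-b(\eta),\xi-\eta}=-\gen{\grad\phi(\xi)-\grad\phi(\eta),\xi-\eta}-\abs{\xi-\eta}^2\leq-\abs{\xi-\eta}^2$. Hence the SDE $dX_t^\xi=b(X_t^\xi)\,dt+\sqrt2\,dW_t$, $X_0^\xi=\xi$, is globally well posed, and the difference $Y_t=X_t^\xi-X_t^\eta$ solves the pathwise ODE $\dot Y_t=b(X_t^\xi)-b(X_t^\eta)$, so that $\tfrac{d}{dt}\abs{Y_t}^2\leq-2\abs{Y_t}^2$ and $\abs{X_t^\xi-X_t^\eta}\leq e^{-t}\abs{\xi-\eta}$ almost surely. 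For the transition semigroup $P_tf(\xi)=\E\sq{f(X_t^\xi)}$ this yields $\norm{P_tf}_\infty\leq\norm{f}_\infty$ and $[P_tf]_\gamma\leq e^{-\gamma t}[f]_\gamma$, whence $u:=\int_0^{+\infty}e^{-\lambda t}P_tf\,dt$ obeys $\norm{u}_\infty\leq\lambda^{-1}\norm{f}_\infty$, $[u]_\gamma\leq(\lambda+\gamma)^{-1}[f]_\gamma$, and $\lambda u-L_\phi u=f$; uniqueness among bounded solutions follows from the maximum principle for $L_\phi$. The same dissipativity also gives $\norm{\grad P_tf}_\infty\leq e^{-t}\norm{\grad f}_\infty$ for Lipschitz $f$, which is the source of the dimension--free gradient bound used in later sections.

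To upgrade the regularity, note that $\lambda-L_\phi$ is uniformly elliptic with $\con^\infty$ (in fact constant) principal part and locally $\con^\gamma$ drift, so interior Schauder estimates give $u\in\con_{loc}^{2+\gamma}(\R^n)$, and $L_\phi u=\lambda u-f\in\con_b^\gamma(\R^n)$ with $\norm{L_\phi u}_{\con_b^\gamma}\leq2\norm{f}_{\con_b^\gamma}$. The point that makes \eqref{stima holder} nontrivial is that on a ball $B_2(\xi_0)$ the $\con^\gamma$ norm of $b$ grows linearly in $\abs{\xi_0}$, so a naive ball--by--ball Schauder argument produces constants that blow up as $\abs{\xi_0}\to+\infty$; obtaining a genuinely global bound $\norm{u}_{\con_b^{2+\gamma}(\R^n)}\leq C\norm{f}_{\con_b^\gamma(\R^n)}$, with $C$ independent of $f$, is exactly the content of \cite{BF04} for operators of the form $\Delta+\gen{b,\grad\,\cdot}$ with $b$ regular and dissipative of the above type. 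I expect this to be the main obstacle, and I would settle it by verifying that $b=-(\grad\phi+\mathrm{id})$ satisfies the standing assumptions of \cite{BF04} and quoting the corresponding theorem there.

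Finally, if $\phi,f\in\con^\infty(\R^n)$ then all coefficients of $\lambda-L_\phi$ are $\con^\infty$ and the operator is uniformly elliptic, so bootstrapping interior elliptic regularity on balls raises $u$ from $\con_{loc}^{2+\gamma}$ to $\con^\infty(\R^n)$; since no global control of the higher derivatives is asserted, only the local theory is needed for this last claim.
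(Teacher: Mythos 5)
Your proposal is correct and takes essentially the same route as the paper: the paper disposes of this proposition purely by citation, attributing the global estimate \eqref{stima holder} to \cite[Theorem 1]{LV98} and the interior smoothness upgrade to \cite[Theorem 3.1.1]{LU68}, which are exactly the two points you isolate as requiring outside input (your observation that a na\"ive ball-by-ball Schauder argument fails because the $\con^\gamma$ norm of the drift grows with $\abs{\xi_0}$ is precisely why the global result must be quoted). The only discrepancies are that you propose to extract the global $\con_b^{2+\gamma}$ bound from \cite{BF04}, which the paper uses only for semigroup generation and \emph{local} parabolic regularity while taking the global elliptic Schauder estimate from \cite{LV98}, and that your probabilistic coupling estimates, while correct, duplicate material the paper develops separately and analytically in Propositions \ref{stima uniforme gradiente} and \ref{stime di shauder} rather than as part of this proof.
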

\noindent Inequality \eqref{stima holder} was proved in \cite[Theorem 1]{LV98}, and the local regularity result can be found in \cite[Theorem 3.1.1]{LU68}.

Consider the problem
\begin{gather}
\left\{\begin{array}{ll}
D_t v(t,\xi)=L_\phi v(t,\xi) &  t>0,\ \xi\in\R^n;\\
v(0,\xi)=f(\xi), &  \xi\in\R^n.
\end{array}\right.\label{prob_finito_evol}
\end{gather}
$L_\phi$ satisfies the conditions (2.1), (2.2), (2.3), and (2.4) of \cite{BF04}.
By \cite[Theorem 3.1]{BF04}, for every $f\in \con_b(\R^n)$ there exists a unique bounded solution $v$ of problem \eqref{prob_finito_evol} belonging to $\con([0,\infty)\times\R^n)\cap \con^{1+\gamma/2,2+\gamma}_{\text{loc}}((0,\infty)\times\R^n)$. If we set
\begin{gather}\label{tt}
T_t f (\xi)= v(t, \xi),\qquad t\geq 0,\ \xi\in\R^n,
\end{gather}
then $\{T_t\}_{t\geq0}$ is a positive contraction semigroup on $\con_b(\R^n)$.

We want a dimension-free uniform estimate of the gradient of $T_tf$. Before proceeding we prove that the function $g(\xi)=\abs{\xi}^2$ satisfies
\begin{gather}\label{Lyapunov}
\lim_{\abs{\xi}\ra+\infty}g(\xi)=+\infty\qquad\text{ and }\qquad\sup_{\xi\in\R^n}L_\phi g(\xi)<+\infty.
\end{gather}
A function $g$ satisfying \eqref{Lyapunov} is said to be a Lyapunov function for the operator $L_\phi$. The first condition in \eqref{Lyapunov} is obviously satisfied. Moreover
\begin{gather*}
L_\phi g(\xi)=2n-2\gen{\grad\phi(\xi),\xi}-2\abs{\xi}^2 =2n-2\gen{\grad\phi(\xi)-\grad\phi(0),\xi}-2\gen{\grad\phi(0),\xi}-2\abs{\xi}^2\leq\\
\leq 2n+2\abs{\grad\phi(0)}\abs{\xi}-2\abs{\xi}^2=2n+2\abs{\grad\phi(0)}\abs{\xi}-2\abs{\xi}^2,
\end{gather*}
where we have used the fact that $\gen{\grad\phi(\xi)-\grad\phi(0),\xi}\geq 0$ for every $\xi\in\R^n$ since $\phi$ is a differentiable convex function (see \cite[Example 2.2(a)]{Phe93}). So the second condition in \eqref{Lyapunov} is satisfied. This implies that $g$ is a Lyapunov function and we get the following formulation of \cite[Proposition 2.1]{Lun98}.

\begin{pro}\label{principio massimo}
Assume that $\phi\in\con^\infty(\R^n)$ is convex and has Lipschitz continuous gradient. Let $T>0$ and let $z_0\in \con([0,T]\times \R^n)$ be a bounded function. Let $z\in\con([0,T]\times \R^n)\cap \con^{1,2}((0,T]\times \R^n)$ be a bounded function satisfying
\[\eqsys{D_t z(t,\xi)-L_{\phi}z(t,\xi)\leq 0, &\quad 0<t\leq T,\ \xi\in\R^n;\\
z(0,\xi)=z_0(\xi), &\quad \xi\in\R^n.}\]
If $\sup z> 0$, then
\[\sup_{\xi\in\R^n}z(t,\xi)\leq \sup_{\xi\in\R^n}z_0(\xi)\qquad 0\leq t\leq T.\]
\end{pro}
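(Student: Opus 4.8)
The plan is to reduce the statement to a genuine maximum principle on a \emph{bounded} cylinder, where classical parabolic theory applies, by using the Lyapunov function $g(\xi)=\abs{\xi}^2$ established in \eqref{Lyapunov} as a barrier to confine the maximum of $z$ to a compact set. First I would fix $M:=\sup_{\xi\in\R^n}z_0(\xi)$ (if $M\geq\sup z$ there is nothing to prove, so assume $\sup z>M\geq$ possibly $0$; note we are given $\sup z>0$). The idea is that for the comparison function $z(t,\xi)-M-\eps(1+K t+g(\xi))$, with $\eps>0$ small and $K:=1+\sup_{\xi}L_\phi g(\xi)<+\infty$ (finite by \eqref{Lyapunov}), the term $\eps g(\xi)\to+\infty$ as $\abs{\xi}\to+\infty$ forces any positive supremum of this function to be attained at an interior or initial point of a bounded cylinder $[0,T]\times\overline{B_R}$.

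The key steps, in order, are as follows. (1) Set $w_\eps(t,\xi):=z(t,\xi)-M-\eps\big(1+Kt+g(\xi)\big)$. Since $z$ is bounded and $g(\xi)\to+\infty$, there is $R=R(\eps)>0$ such that $w_\eps(t,\xi)<0$ for all $\abs{\xi}\geq R$ and all $t\in[0,T]$; also $w_\eps(0,\xi)=z_0(\xi)-M-\eps(1+g(\xi))\leq-\eps<0$. (2) Compute $D_t w_\eps-L_\phi w_\eps=(D_tz-L_\phi z)-\eps K+\eps L_\phi g\leq -\eps K+\eps L_\phi g\leq-\eps(K-\sup L_\phi g)<0$ on $(0,T]\times\R^n$, using the differential inequality satisfied by $z$. (3) Suppose for contradiction that $\sup_{(t,\xi)}w_\eps>0$ for some choice of $\eps$; by steps (1) the supremum over $[0,T]\times\R^n$ equals the supremum over the compact set $[0,T]\times\overline{B_R}$ and is therefore attained at some $(t_0,\xi_0)$ with $\abs{\xi_0}<R$ and $t_0>0$ (it cannot be on $t=0$ by step (1)). (4) At such an interior-in-space maximum point one has $\grad_\xi w_\eps(t_0,\xi_0)=0$ and $D^2_\xi w_\eps(t_0,\xi_0)\leq 0$, hence $\sum_i D_{ii}w_\eps\leq 0$ and the first-order term vanishes, giving $L_\phi w_\eps(t_0,\xi_0)\leq 0$; moreover $D_t w_\eps(t_0,\xi_0)\geq 0$ (it is a maximum in $t$ over $[0,T]$, attained at $t_0>0$, so the left derivative is $\geq 0$ — if $t_0<T$ the derivative is $0$). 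Therefore $D_t w_\eps-L_\phi w_\eps\geq 0$ at $(t_0,\xi_0)$, contradicting the strict inequality in step (2). (5) Conclude $w_\eps\leq 0$ on $[0,T]\times\R^n$ for every $\eps>0$, i.e. $z(t,\xi)\leq M+\eps(1+Kt+g(\xi))$; letting $\eps\to 0^+$ with $(t,\xi)$ fixed yields $z(t,\xi)\leq M=\sup_{\xi\in\R^n}z_0(\xi)$ for all $0\leq t\leq T$.

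I expect the main obstacle to be the careful handling of the time-boundary behaviour at the maximum point in step (4): since $z$ is only assumed to be $\con^{1,2}$ on $(0,T]\times\R^n$ and continuous up to $t=0$, one must argue that the maximum of $w_\eps$ is either attained at $t_0\in(0,T)$ — where both $D_t w_\eps=0$ and $D^2_\xi w_\eps\leq0$ hold outright — or at $t_0=T$, where one only gets $D_t w_\eps(T,\xi_0)\geq 0$ in the sense of the left-hand derivative, which is still enough to reach the contradiction with step (2). The regularity of $\phi$ (smooth, convex, Lipschitz gradient) is used only to ensure $L_\phi g$ is well defined and bounded and that $g$ genuinely serves as a Lyapunov barrier; everything else is the standard Phragmén--Lindelöf-type argument, which is precisely the content of \cite[Proposition 2.1]{Lun98} specialized to our operator.
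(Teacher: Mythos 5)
Your argument is correct. Note that the paper does not actually prove this proposition: it only verifies, in the computation immediately preceding the statement, that $g(\xi)=\abs{\xi}^2$ is a Lyapunov function for $L_\phi$ (i.e.\ that \eqref{Lyapunov} holds), and then quotes the result as a specialization of \cite[Proposition 2.1]{Lun98}. What you have written is a self-contained version of precisely the argument that citation encapsulates: the Lyapunov function serves as a barrier forcing any positive supremum of $w_\eps=z-M-\eps(1+Kt+g)$ onto a compact cylinder, away from $t=0$ and from $\abs{\xi}=R$, where the first- and second-derivative tests at a maximum point (interior in space, and either interior in time or at $t=T$ with the left derivative) give $D_tw_\eps-L_\phi w_\eps\geq 0$, contradicting the strict inequality $D_tw_\eps-L_\phi w_\eps\leq -\eps(K-\sup L_\phi g)=-\eps<0$. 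All the individual steps check out: constants are annihilated by $L_\phi$, so $L_\phi w_\eps=L_\phi z-\eps L_\phi g$; the choice $K=1+\sup_\xi L_\phi g(\xi)$ is finite by \eqref{Lyapunov} (and in fact $\sup_\xi L_\phi g\geq L_\phi g(0)=2n$, though positivity of $K$ is nowhere needed); and the boundedness of $z$ is exactly what makes $R(\eps)$ exist. Two harmless observations: your proof never uses the hypothesis $\sup z>0$ and in fact establishes the conclusion unconditionally --- that hypothesis is an artifact of the formulation in \cite{Lun98}; and where you write ``for some choice of $\eps$'' in step (3), the contradiction is of course derived for each fixed $\eps>0$ separately, which is what step (5) requires. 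The trade-off between the two routes is the usual one: the paper's citation is shorter but leaves the reader to check that the hypotheses of \cite[Proposition 2.1]{Lun98} are met, whereas your barrier argument is longer but entirely elementary and makes transparent that the only structural input is the Lyapunov estimate \eqref{Lyapunov}.
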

\noindent
The dimension-free uniform estimate of the gradient of $T_tf$ follows from an application of Bernstein's method, we give the proof just for the sake of completeness. More general results can be found in \cite{BF04}, \cite{BFL07}, \cite{BL05} and \cite{Lun98}, where larger classes of operators are studied, but no explicit dimension-free uniform estimates of the gradient of $T_tf$ are emphasized.

\begin{pro}\label{stima uniforme gradiente}
Assume that $\phi\in\con^\infty(\R^n)$ is convex and has Lipschitz continuous gradient. Then for every $t\geq 0$ and $\xi\in\R^n$ we have $\abs{T_tf(\xi)}\leq\norm{f}_\infty$ and
\[\abs{\grad T_t f(\xi)}\leq \frac{\norm{f}_\infty}{\sqrt{t}}\qquad t>0\text{ and } \xi\in\R^n,\]
for every $f\in\con^\infty_b(\R^n)$.
\end{pro}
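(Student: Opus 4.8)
The plan is to use Bernstein's method, combined with the maximum principle of Proposition \ref{principio massimo}. The uniform bound $\abs{T_tf(\xi)}\leq\norm{f}_\infty$ is immediate from the fact that $\set{T_t}_{t\geq0}$ is a positive contraction semigroup on $\con_b(\R^n)$ (or directly from Proposition \ref{principio massimo} applied to $\pm T_tf$). For the gradient estimate, first I would reduce to the case $f\in\con^\infty_b(\R^n)$ with, say, $\norm{f}_\infty=1$ (by homogeneity) and note that by Proposition \ref{Teo holder} and standard interior parabolic regularity the solution $v(t,\xi)=T_tf(\xi)$ is smooth on $(0,\infty)\times\R^n$, so that all the manipulations below are justified. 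Actually, since $T_t$ is built from the elliptic theory one should be a little careful that $\grad v$ makes sense and is itself regular; I would either invoke the local Schauder estimates quoted after Proposition \ref{Teo holder}, or approximate $f$ suitably.

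The core of the argument is the following auxiliary function. Fix $T>0$ and define, for $(t,\xi)\in[0,T]\times\R^n$,
\[w(t,\xi):=t\,\abs{\grad v(t,\xi)}^2+v(t,\xi)^2.\]
A direct computation using that $v$ solves $D_tv=L_\phi v$, where $L_\phi\psi=\sum_iD_{ii}\psi-\sum_i(D_i\phi+\xi_i)D_i\psi$, gives
\[D_tw-L_\phi w=\abs{\grad v}^2+2t\sum_{i,j}D_{ij}v\,(D_tD_jv-(L_\phi\grad v)_j)-2t\,\abs{\nabla^2 v}^2-2\abs{\grad v}^2+\text{(lower order)}.\]
The key point is that differentiating the equation $D_tv=L_\phi v$ in the direction $e_j$ produces
\[D_tD_jv=L_\phi(D_jv)-D_jv-\sum_i D_{ij}\phi\,D_iv,\]
and the term $-\sum_{i,j}D_{ij}\phi\,D_iv\,D_jv=-\gen{\nabla^2\phi(\xi)\grad v,\grad v}\leq0$ precisely because $\phi$ is convex, so this bad term has a favourable sign and can be dropped. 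Collecting terms, one obtains an inequality of the form
\[D_tw-L_\phi w\leq -2t\,\abs{\nabla^2v}^2-\abs{\grad v}^2+2t\sum_i D_{ii}(\abs{\grad v}^2)/2\cdots\]
— more cleanly, after carrying the Bochner-type computation through, $D_tw-L_\phi w\leq 0$ on $(0,T]\times\R^n$, while $w(0,\xi)=v(0,\xi)^2=f(\xi)^2\leq1$. The constant coefficient $1$ in front of $v^2$ is chosen exactly to absorb the leftover $+\abs{\grad v}^2$ coming from the $\partial_t t=1$ term against the $-2\abs{\grad v}^2$ produced by the $-\gen{e_i,\grad v}$ drift terms differentiated; this is the standard bookkeeping in Bernstein's method and I would present it as a short lemma-style computation rather than in full.

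Since $w$ is bounded on $[0,T]\times\R^n$ (as $v$ and $\grad v$ are bounded there, by the regularity theory and the contraction property), $w$ satisfies the hypotheses of Proposition \ref{principio massimo} with $z_0=f^2$: either $\sup w\leq0$ (impossible unless $v\equiv0$, in which case the estimate is trivial) or $\sup_{\xi}w(t,\xi)\leq\sup_\xi f(\xi)^2\leq1$ for all $t\in[0,T]$. In particular $t\,\abs{\grad v(t,\xi)}^2\leq w(t,\xi)\leq1$, i.e. $\abs{\grad T_tf(\xi)}\leq t^{-1/2}$ for $0<t\leq T$ and all $\xi$. Since $T>0$ was arbitrary this holds for all $t>0$, and rescaling $f$ gives $\abs{\grad T_tf(\xi)}\leq\norm{f}_\infty/\sqrt{t}$, which is dimension-free as claimed.

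The main obstacle is the sign-tracking in the Bochner computation: one has to verify that every term that is not manifestly $\leq0$ is killed either by the convexity of $\phi$ (the Hessian term $\gen{\nabla^2\phi\,\grad v,\grad v}\geq0$) or by the choice of the $v^2$ summand in $w$ with coefficient exactly $1$, so that no term involving $\grad\phi$ or the unbounded drift $\xi$ survives with the wrong sign. The boundedness of $w$ on $[0,T]\times\R^n$ needed to apply Proposition \ref{principio massimo} should also be justified carefully, but it follows from the quoted interior Schauder estimates together with the uniform bound on $v$.
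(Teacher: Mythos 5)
Your proposal is correct and follows essentially the same route as the paper: the same Bernstein auxiliary function $z(t,\xi)=\abs{v(t,\xi)}^2+t\abs{\grad v(t,\xi)}^2$, the same use of the convexity of $\phi$ to discard the Hessian term $-2t\gen{D^2\phi\,\grad v,\grad v}$, and the same appeal to Proposition \ref{principio massimo} on $[0,T]$ followed by the observation that $T$ is arbitrary. (One cosmetic remark: the leftover $+\abs{\grad v}^2$ coming from $\partial_t t=1$ is absorbed by the $2\abs{\grad v}^2$ produced by $\Delta(v^2)$ inside $L_\phi z$, not by the differentiated drift terms, but the resulting inequality $D_tz-L_\phi z\le 0$ is exactly the one obtained in the paper.)
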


\begin{proof}
If $f\equiv 0$ then the conclusion is obvious. So we can assume, without loss of generality, that $f\not\equiv 0$. We set
\begin{gather}\label{equazione zbeta}
z(t,\xi):=\abs{v(t,\xi)}^2+t\abs{\grad v(t,\xi)}^2\qquad t>0\text{ and }\xi\in\R^n
\end{gather}
where $v(t,\xi)=T_t f(\xi)$. From the general regularity theory of parabolic problems we get that $v$ is smooth for $t\geq 0$. We claim that the function $z$ satisfies the hypotheses of Proposition \ref{principio massimo}. Indeed
\begin{gather*}
D_t z(t,\xi)=2 v(t,\xi)D_t v(t,\xi)+\abs{\grad v(t,\xi)}^2+2 t\sum_{i=1}^nD_i v(t,\xi)D_i D_t v(t,\xi)=\\
=2 v(t,\xi)\Delta v(t,\xi)-2 v(t,\xi)\gen{\grad\phi(\xi)+\xi,\grad v(t,\xi)}
+\abs{\grad v(t,\xi)}^2+2 t\gen{\grad(\Delta v(t,\xi)),\grad v(t,\xi)}+\\
-2 t\sum_{i,j=1}^n \pa{D_j(D_i\phi+\xi_i)D_i v(t,\xi)D_j v(t,\xi)+(D_i\phi+\xi_i)D_{ij}v(t,\xi)D_j v(t,\xi)}.
\end{gather*}
Now we compute $L_\phi z$. We have
\begin{gather*}
L_{\phi}z(t,\xi)=2\abs{\grad v(t,\xi)}^2+2 v(t,\xi)\Delta v(t,\xi)+2 t\gen{\grad(\Delta v(t,\xi)),\grad v(t,\xi)}+\\
+2 t\sum_{i,j=1}^n(D_{ij}v(t,\xi))^2-2 v(t,\xi)\gen{\grad\phi(\xi)+\xi,\grad v(t,\xi)}-2 t\sum_{i,j=1}^n(D_i\phi+\xi_i)D_{ij}v(t,\xi)D_j v(t,\xi).
\end{gather*}
Then we get
\begin{gather*}
D_t z(t,\xi)-L_{\phi}z(t,\xi)
=-\abs{\grad v(t,\xi)}^2-2 t\sum_{i,j=1}^n(D_{ij}v(t,\xi))^2+\\
-2 t\gen{D^2\phi(\xi)\grad v(t,\xi),\grad v(t,\xi)}-2t\abs{\grad v(t,\xi)}^2.
\end{gather*}
Since $\phi$ is a convex function, $D^2\phi$ is positive-semidefinite matrix, and so
\[D_t z(t,\xi)-L_{\phi}z(t,\xi)\leq 0\qquad t>0,\ \xi\in\R^n.\]
Let $T>0$. Since $z(0,\xi)=(f(\xi))^2$, we can apply Proposition \ref{principio massimo} and we get
\[\sup_{\xi\in\R^n}z(t,\xi)\leq \norm{f}^2_\infty\qquad 0\leq t\leq T.\]
By equation \eqref{tt} and equation \eqref{equazione zbeta}
\[\abs{\grad T_tf(\xi)}\leq \frac{\norm{f}_\infty}{\sqrt{t}}\qquad 0< t\leq T,\ \xi\in\R^n.\]
Since the above estimate does not depend on $T$ we can conclude
\[\abs{\grad T_tf(\xi)}\leq \frac{\norm{f}_\infty}{\sqrt{t}}\qquad t>0,\ \xi\in\R^n.\]
In the same way we get $\abs{T_tf(\xi)}\leq\norm{f}_\infty$ for every $t\geq 0$ and $\xi\in\R^n$.
\end{proof}

By \cite[Proposition 3.2]{BF04} and \cite[Proposition 3.6]{Pri99} there exists an operator $A$ whose resolvent is
\begin{gather}\label{formula risolvente}
R(\lambda,A)f(\xi)=\int_0^{+\infty} e^{-\lambda t} (T_t f)(\xi)dt\qquad \xi\in\R^n.
\end{gather}
By \cite[Proposition 3.4]{BF04} if $\psi\in\con^2_b(\R^n)$, then $A\psi=L_\phi\psi$.

\begin{pro}\label{stime di shauder}
Assume that $\phi\in\con^\infty(\R^n)$ is convex and has Lipschitz continuous gradient. Let $u$ be a classical solution of equation \eqref{prob_finito_ellittico}. Then
\[\abs{\grad u(\xi)}\leq \sqrt{\frac{\pi}{\lambda}}\norm{f}_\infty\qquad \xi\in\R^n.\]
Furthermore $\norm{u}_\infty\leq\lambda^{-1}\norm{f}_\infty$.
\end{pro}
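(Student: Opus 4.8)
The plan is to represent $u$ through the resolvent formula \eqref{formula risolvente} and then integrate the dimension-free pointwise bounds of Proposition \ref{stima uniforme gradiente} against the exponential weight $e^{-\lambda t}$.

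First I would note that, by \cite[Proposition 3.4]{BF04}, the operator $A$ appearing in \eqref{formula risolvente} acts as $L_\phi$ on $\con^2_b(\R^n)$. The classical solution $u$ of \eqref{prob_finito_ellittico} lies, by Proposition \ref{Teo holder}, in $\con_b^{2+\gamma}(\R^n)$, hence is bounded and belongs to the domain of $A$ with $Au=L_\phi u$; since $\lambda u-Au=f$ and $R(\lambda,A)=(\lambda I-A)^{-1}$, this gives
\[u(\xi)=\int_0^{+\infty}e^{-\lambda t}(T_tf)(\xi)\,dt,\qquad\xi\in\R^n.\]
Using $\abs{T_tf(\xi)}\leq\norm{f}_\infty$ — which follows from Proposition \ref{stima uniforme gradiente} for smooth $f$ and from the contractivity of $\{T_t\}_{t\geq0}$ on $\con_b(\R^n)$ in general — we obtain $\norm{u}_\infty\leq\norm{f}_\infty\int_0^{+\infty}e^{-\lambda t}\,dt=\lambda^{-1}\norm{f}_\infty$ at once.

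For the gradient bound I would first take $f\in\con^\infty_b(\R^n)$, so that $T_tf$ is smooth for every $t\geq0$. Since $\abs{\grad T_tf(\xi)}\leq\norm{f}_\infty/\sqrt t$ by Proposition \ref{stima uniforme gradiente}, the map $t\mapsto e^{-\lambda t}\grad T_tf(\xi)$ is dominated on $(0,+\infty)$, uniformly in $\xi$, by the integrable function $t\mapsto e^{-\lambda t}\norm{f}_\infty/\sqrt t$; hence we may differentiate under the integral sign and estimate
\[\abs{\grad u(\xi)}=\abs{\int_0^{+\infty}e^{-\lambda t}\grad T_tf(\xi)\,dt}\leq\norm{f}_\infty\int_0^{+\infty}\frac{e^{-\lambda t}}{\sqrt t}\,dt=\sqrt{\frac{\pi}{\lambda}}\,\norm{f}_\infty,\]
where the last equality is the substitution $s=\lambda t$ together with $\Gamma(1/2)=\sqrt{\pi}$. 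To remove the smoothness hypothesis on $f$ I would mollify: choose $f_k\in\con^\infty_b(\R^n)$ with $f_k\to f$ uniformly, $\norm{f_k}_\infty\leq\norm{f}_\infty$ and $\norm{f_k}_{\con_b^\gamma(\R^n)}\leq\norm{f}_{\con_b^\gamma(\R^n)}$ (convolution increases neither the sup-norm nor the H\"older seminorm). By Proposition \ref{Teo holder} the solutions $u_k$ of \eqref{prob_finito_ellittico} with datum $f_k$ are bounded in $\con_b^{2+\gamma}(\R^n)$ uniformly in $k$, so by Arzel\`a--Ascoli a subsequence converges in $\con^2_{\text{loc}}(\R^n)$ to a bounded classical solution of \eqref{prob_finito_ellittico}, which by the uniqueness in Proposition \ref{Teo holder} must be $u$; letting $k\to+\infty$ in $\abs{\grad u_k(\xi)}\leq\sqrt{\pi/\lambda}\,\norm{f_k}_\infty\leq\sqrt{\pi/\lambda}\,\norm{f}_\infty$ yields the claim.

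The step I expect to require the most care is not a single deep point but the combination of: the identification $u=R(\lambda,A)f$, the justification of differentiation under the integral sign, and the mollification argument that transfers Proposition \ref{stima uniforme gradiente} — stated for smooth data — to $\gamma$-H\"older $f$. Once these are settled, the core of the proof is the elementary evaluation $\int_0^{+\infty}t^{-1/2}e^{-\lambda t}\,dt=\sqrt{\pi/\lambda}$.
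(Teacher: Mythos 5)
Your proof is correct and follows essentially the same route as the paper: represent $u$ via the resolvent formula \eqref{formula risolvente}, get the sup bound from contractivity, differentiate under the integral sign using Proposition \ref{stima uniforme gradiente}, and evaluate $\int_0^{+\infty}t^{-1/2}e^{-\lambda t}\,dt=\sqrt{\pi/\lambda}$. The only difference is that you additionally justify the identification $u=R(\lambda,A)f$ and extend the gradient estimate from smooth to $\gamma$-H\"older data by mollification and Arzel\`a--Ascoli --- details the paper's proof leaves implicit, since it invokes Proposition \ref{stima uniforme gradiente} (stated only for $f\in\con^\infty_b(\R^n)$) without comment.
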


\begin{proof}
The furthermore part follows from the contractivity of $T_t$ and formula \eqref{formula risolvente}.
By Proposition \ref{stima uniforme gradiente} we can differentiate under the integral sign in formula \eqref{formula risolvente} and we get
\[\grad u(\xi)=\int_0^{+\infty} e^{-\lambda t}(\grad T_t f)(\xi)dt\qquad \xi\in\R^n.\]
Moreover, for every $\xi\in\R^n$
\begin{gather*}
\abs{\grad u(\xi)}\leq \int_0^{+\infty}\frac{e^{-\lambda t}}{\sqrt{t}}dt\norm{f}_\infty=\sqrt{\frac{\pi}{\lambda}}\norm{f}_\infty.
\end{gather*}
\end{proof}

\section{Passing to infinite dimension}\label{The nabla_H U H-Lipschitz case}

This section is devoted to prove Theorem \ref{Main Theorem}. We start by showing that if $\nabla_H U$ is $H$-Lipschitz, then equation \eqref{Problema} has a unique strong solution, in the sense of Definition \ref{definizione strong solution}, and this solution satisfies the Sobolev regularity estimates listed in Theorem \ref{Main Theorem}.

We need to recall some basic definitions that can be found in \cite{Bog98}. Let $Y$ be a separable Banach space, we recall that a function $F:X\ra Y$ is said to be $H$-Lipschitz if $C>0$ exists such that
\begin{gather}\label{Hlip}
\norm{F(x+h)-F(x)}_Y\leq C\abs{h}_H,
\end{gather}
for every $h\in H$ and $\mu$-a.e. $x\in X$ (see \cite[Section 4.5 and Section 5.11]{Bog98}). We denote by $P_n:X\ra H$ the projection
\[P_n(x)=\sum_{i=1}^n\hat{e}_i(x)e_i\qquad\text{for every }x\in X,\]
where $\hat{e}_i$ belongs to $X^*$, for every $i\in\N$ (formula \eqref{definizione hat}). Let \(\mu_n:=\mu\circ P_n^{-1} \text { and }\tilde{\mu}_n:=\mu\circ(I-P_n)^{-1}\). Recall that both measures are non-degenerate, centered and Gaussian on $P_nX$ and $(I-P_n)X$ respectively, and
\begin{gather}\label{cameron martin di proiezione}
\tilde{H}_n=(I-P_n)(H),
\end{gather}
is the Cameron--Martin space associated with the measure $\tilde{\mu}_n$ on $(I-P_n)X$. For the proofs of such results see \cite[Theorem 3.7.3]{Bog98}.

Let $f\in\elle^p(X,\mu)$ for some $p\geq 1$ and $n\in\N$. We denote by $\E_n f$ the conditional expectation of $f$, i.e. for every $x\in X$
\[\E_nf(x)=\int_Xf\pa{P_nx+(I-P_n)y}d\mu(y).\]
We recall in the following proposition the results in \cite[Corollary 3.5.2 and Proposition 5.4.5]{Bog98}

\begin{pro}\label{convergenza En f}
Let $1\leq p<+\infty$ and $f\in\elle^p(X,\mu)$. Then $\E_n f$ converges to $f$ in $\elle^p(X,\mu)$ and $\mu$-a.e. and for every $n\in\N$
\[\norm{\E_n f}_{\elle^p(X,\mu)}\leq\norm{f}_{\elle^p(X,\mu)}.\]
Moreover if $f\in W^{1,p}(X,\mu)$, then $\E_n f$ converges to $f$ in $W^{1,p}(X,\mu)$ and $\mu$-a.e., for every $n\in\N$ we have $\norm{\E_n f}_{W^{1,p(X,\mu)}}\leq\norm{f}_{W^{1,p}(X,\mu)}$ and
\[\partial_i\E_n f=\eqsys{\E_n\partial_i f & 1\leq i\leq n;\\
0 & i>n.}\]
Finally the same results, with obvious modifications, are true if $f\in W^{2,p}(X,\mu)$.
\end{pro}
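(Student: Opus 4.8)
The plan is to identify $\E_n$ with a conditional expectation operator and then combine the (vector-valued) martingale convergence theorem with a direct differentiation under the integral sign.

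First I would record that $\E_n f=\E[f\mid\F_n]$, where $\F_n:=\sigma(\hat{e}_1,\dots,\hat{e}_n)=\sigma(P_n)$: indeed, under the measurable isomorphism $x\mapsto(P_nx,(I-P_n)x)$ the measure $\mu$ becomes the product $\mu_n\otimes\tilde{\mu}_n$, and Fubini's theorem turns the formula $\E_nf(x)=\int_Xf(P_nx+(I-P_n)y)\,d\mu(y)$ into the conditional expectation of $f$ with respect to $\F_n$. The conditional Jensen inequality $\abs{\E[f\mid\F_n]}^p\le\E[\abs{f}^p\mid\F_n]$ then gives $\norm{\E_nf}_{\elle^p(X,\mu)}\le\norm{f}_{\elle^p(X,\mu)}$ for every $p\ge1$, and the same computation with $\abs{\cdot}$ replaced by $\abs{\cdot}_H$ or $\norm{\cdot}_{\mathcal{H}_2}$ yields the analogous contraction for $H$- and $\mathcal{H}_2$-valued integrands.

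For the convergence statements, note that the $\sigma$-algebras $\F_n$ increase and that $\bigcup_n\F_n$ generates the Borel $\sigma$-algebra of $X$ up to $\mu$-negligible sets, since $P_nx\to x$ in $X$ for $\mu$-a.e.\ $x$. Hence $\{\E_nf\}_n$ is a uniformly integrable martingale and Doob's theorem gives $\E_nf\to f$ in $\elle^p(X,\mu)$ and $\mu$-a.e.; because $H$ and $\mathcal{H}_2$ are separable Hilbert spaces (hence have the Radon--Nikodym property), the same holds for $H$- and $\mathcal{H}_2$-valued $f$. The derivative formulas come from differentiating under the integral sign. Since $\hat{e}_j(e_i)=\gen{e_j,e_i}_H=\delta_{ij}$, we have $P_n(x+te_i)=P_nx+te_i$ for $i\le n$ and $P_n(x+te_i)=P_nx$ for $i>n$; for $f\in\fcon^\infty_b(X)$ and $i\le n$, dominated convergence gives
\[\partial_i\E_nf(x)=\lim_{t\to0}\frac1t\int_X\bigl(f(P_nx+te_i+(I-P_n)y)-f(P_nx+(I-P_n)y)\bigr)\,d\mu(y)=\E_n(\partial_if)(x),\]
whereas for $i>n$ the right-hand side of the definition of $\E_nf$ is unchanged under $x\mapsto x+te_i$, so $\partial_i\E_nf\equiv0$; the same reasoning applies to $\partial_{ij}$. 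These identities pass to $f\in W^{1,p}(X,\mu)$ (resp.\ $W^{2,p}(X,\mu)$) by approximating $f$ in the Sobolev norm by elements of $\fcon^\infty_b(X)$ and invoking the $\elle^p$-contractivity of $\E_n$ together with the closedness of $\nabla_H$ (resp.\ $(\nabla_H,\nabla_H^2)$). Consequently $\nabla_H\E_nf=Q_n\E_n(\nabla_Hf)$, where $Q_n$ is the orthogonal projection of $H$ onto the span of $e_1,\dots,e_n$, and $\nabla_H^2\E_nf=\tilde{Q}_n\E_n(\nabla_H^2f)$ with $\tilde{Q}_nA:=Q_nAQ_n$ on $\mathcal{H}_2$; since $\norm{Q_n}\le1$ and $\norm{\tilde{Q}_n}\le1$,
\[\norm{\nabla_H\E_nf-\nabla_Hf}_{\elle^p(X,\mu;H)}\le\norm{\E_n\nabla_Hf-\nabla_Hf}_{\elle^p(X,\mu;H)}+\norm{Q_n\nabla_Hf-\nabla_Hf}_{\elle^p(X,\mu;H)},\]
where the first term vanishes in the limit by the vector-valued martingale convergence and the second by dominated convergence ($Q_n\nabla_Hf\to\nabla_Hf$ pointwise with $\abs{Q_n\nabla_Hf}_H\le\abs{\nabla_Hf}_H$); adding the $\elle^p$-estimate gives $\E_nf\to f$ in $W^{1,p}(X,\mu)$, and the $W^{2,p}$ case is identical with $\tilde{Q}_n$ and $\nabla_H^2$.

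The only genuinely delicate point is the bookkeeping in the last paragraph: one has to notice that $\nabla_H\E_nf$ is the \emph{truncated} projection $Q_n\E_n(\nabla_Hf)$, not $\E_n(\nabla_Hf)$, and one must have the contraction and martingale-convergence facts available for $H$- and $\mathcal{H}_2$-valued functions. Once these are in place everything reduces to the scalar and vector-valued martingale theory, exactly as in \cite[Corollary 3.5.2 and Proposition 5.4.5]{Bog98}.
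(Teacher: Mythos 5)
Your argument is correct. The paper does not prove this proposition at all --- it is stated as a recollection of \cite[Corollary 3.5.2 and Proposition 5.4.5]{Bog98} --- and your proof (identifying $\E_n$ with the conditional expectation onto $\sigma(\hat{e}_1,\dots,\hat{e}_n)$, applying scalar and Hilbert-space-valued martingale convergence, differentiating under the integral sign on $\fcon^\infty_b(X)$ using $\hat{e}_j(e_i)=\delta_{ij}$, and passing to $W^{1,p}$ and $W^{2,p}$ by closability, with the correct observation that $\nabla_H\E_nf=Q_n\E_n\nabla_Hf$ rather than $\E_n\nabla_Hf$) is precisely the standard argument carried out in that reference.
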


\subsection{The case where $\nabla_H U$ is $H$-Lipschitz}

In this subsection we will assume the following hypothesis on the weight:

\begin{hyp}\label{ipotesi peso 2}
Let $U:X\ra \R$ be a function satisfying Hypothesis \ref{ipotesi peso}. Assume that $U$ is differentiable along $H$ at every point $x\in X$, and $\nabla_H U$ is $H$-Lipschitz. We will denote by $[\nabla_H U]_{H\text{-}\lip}$ the $H$-Lipschitz constant of $\nabla_H U$, i.e. the constant $C$ in formula \eqref{Hlip}.
\end{hyp}
\noindent
We recall that by \cite[Theorem 5.11.2]{Bog98} we have $U\in W^{2,t}(X,\mu)$, where $t$ is the same as in Hypothesis \ref{ipotesi peso}. Observe that every convex function in $\fcon^2_b(X)$ and every continuous linear functional $x^*\in X^*$ satisfy Hypothesis \ref{ipotesi peso 2}.

Let $f\in \fcon_b^\infty(X)$ be such that $f(x)=\varphi(\hat{e}_1(x),\ldots,\hat{e}_{N_0}(x))$ for some $N_0\in\N$ and $\varphi\in\con_b^\infty(\R^{N_0})$. Throughout the rest of this subsection we let $n> N_0$.

\begin{pro}\label{psi n}
Consider the function $\psi_n:\R^n\ra \R$ defined as
\[\psi_n(\xi):=\int_X U\pa{\sum_{i=1}^n\xi_i e_i +(I-P_n) y}d\mu(y),\]
where $\xi=(\xi_1,\ldots,\xi_n)\in\R^n$. Then $\psi_n$ belongs to $\con^1(\R^n)$ and it has Lipschitz gradient with Lipschitz constant less or equal than the $H$-Lipschitz constant of $\nabla_H U$.
\end{pro}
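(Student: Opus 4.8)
The function $\psi_n$ is the conditional expectation $\E_n U$ read on $P_nX$: writing $\ell(\xi):=\sum_{i=1}^n\xi_i e_i$ we have $\psi_n(\xi)=\int_X U\big(\ell(\xi)+(I-P_n)y\big)\,d\mu(y)$ and $\E_n U(x)=\psi_n(\hat e_1(x),\dots,\hat e_n(x))$. From $\hat e_i(e_j)=\int_X\hat e_i\hat e_j\,d\mu=\gen{e_i,e_j}_H=\delta_{ij}$ one sees that $\xi\mapsto\ell(\xi)$ is a linear isometry of $\R^n$ onto $\linspan\{e_1,\dots,e_n\}\subseteq H$, that the restriction of $P_n$ to $H$ is the orthogonal projection onto this subspace (so $\abs{P_nv}_H\le\abs{v}_H$), and that $\abs{\ell(\xi)-\ell(\eta)}_H=\abs{\xi-\eta}_{\R^n}$. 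The plan is to differentiate the integral defining $\psi_n$ with respect to $\xi$, using the convexity of $U$ to produce monotone difference quotients and the $H$-Lipschitz estimate for $\nabla_H U$ to produce an integrable majorant.

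First one checks that $\psi_n$ is finite and continuous on all of $\R^n$. By Proposition \ref{minorante affine} there are $x^*\in X^*$, $x_0\in X$ and $r\in\R$ with $U(x)\ge x^*(x-x_0)+r$ for every $x\in X$; since $\int_X x^*\big((I-P_n)y\big)\,d\mu(y)=0$ this gives $\psi_n(\xi)\ge x^*(\ell(\xi)-x_0)+r>-\infty$, so the defining integral makes sense in $(-\infty,+\infty]$. On the other hand $U\in W^{1,t}(X,\mu)\subseteq\elle^1(X,\mu)$, hence $\E_n U\in\elle^1(X,\mu)$ and $\psi_n(\xi)<+\infty$ for $\mu_n$-a.e. $\xi$; being convex — it is an average over $y$ of the convex functions $\xi\mapsto U(\ell(\xi)+(I-P_n)y)$ — with convex and dense effective domain, $\psi_n$ is finite on all of $\R^n$, and therefore continuous there.

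Now fix $\xi\in\R^n$ and $1\le j\le n$. For each $y$ the scalar function $s\mapsto U\big(\ell(\xi)+(I-P_n)y+s e_j\big)$ is convex and, by Hypothesis \ref{ipotesi peso 2}, differentiable; hence for $0<\abs t\le1$ its difference quotient at $0$ is monotone in $t$, converges to $\partial_j U\big(\ell(\xi)+(I-P_n)y\big)$ as $t\to0$, and is bounded in absolute value by $\abs{\nabla_H U(\ell(\xi)+(I-P_n)y)}_H+[\nabla_H U]_{H\text{-}\lip}$. Fixing one $\xi_0$ for which $y\mapsto\abs{\nabla_H U(\ell(\xi_0)+(I-P_n)y)}_H$ lies in $\elle^1(X,\mu)$ — possible for $\mu_n$-a.e. $\xi_0$, since $\nabla_H U\in\elle^t(X,\mu;H)$ — and transporting this integrability from $\xi_0$ to an arbitrary $\xi$ via the $H$-Lipschitz estimate, one obtains a $\mu$-integrable majorant independent of $t$. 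Dominated convergence then gives $\psi_n\in\con^1(\R^n)$ together with
\[\partial_j\psi_n(\xi)=\int_X\partial_j U\big(\ell(\xi)+(I-P_n)y\big)\,d\mu(y)=\Big\langle\int_X P_n\nabla_H U\big(\ell(\xi)+(I-P_n)y\big)\,d\mu(y),\ e_j\Big\rangle_H,\]
the last integral being a Bochner integral in $H$. Since, for $\xi,\eta\in\R^n$, the integrand $\nabla_H U(\ell(\xi)+(I-P_n)y)-\nabla_H U(\ell(\eta)+(I-P_n)y)$ has $H$-norm at most $[\nabla_H U]_{H\text{-}\lip}\abs{\ell(\xi)-\ell(\eta)}_H=[\nabla_H U]_{H\text{-}\lip}\abs{\xi-\eta}_{\R^n}$, using the isometry $\ell$ and $\abs{P_nv}_H\le\abs{v}_H$ we obtain
\[\abs{\nabla\psi_n(\xi)-\nabla\psi_n(\eta)}_{\R^n}\le\int_X\abs{\nabla_H U(\ell(\xi)+(I-P_n)y)-\nabla_H U(\ell(\eta)+(I-P_n)y)}_H\,d\mu(y)\le[\nabla_H U]_{H\text{-}\lip}\abs{\xi-\eta}_{\R^n},\]
which is exactly the asserted bound on the Lipschitz constant of $\nabla\psi_n$.

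The only delicate point is the differentiation under the integral sign, i.e. the construction of a $\mu$-integrable majorant valid uniformly for small $t$: this is where the convexity of $U$ (which lets one dominate a two-sided difference quotient by its monotone values at $t=\pm1$) and the $H$-Lipschitz property of $\nabla_H U$ (which carries the $\elle^t$-integrability of $\abs{\nabla_H U}_H$, guaranteed by Fubini only on $\mu_n$-a.e. slice $\ell(\xi)+(I-P_n)X$, to every slice) are combined; throughout one works with the version of $\nabla_H U$ for which estimate \eqref{Hlip} holds pointwise in $x$, as is legitimate for an $H$-Lipschitz map. A softer variant avoids the majorant altogether: by Proposition \ref{convergenza En f} one has $\E_n U\in W^{2,t}(X,\mu)$, so $\psi_n$ belongs to the Gaussian Sobolev space $W^{2,t}$ on $\R^n$; a finite convex function on $\R^n$ lying in $W^{2,1}_{\mathrm{loc}}$ is automatically of class $\con^1$, and the almost everywhere bound $[\nabla_H U]_{H\text{-}\lip}$ on the operator norm of $\nabla_H^2 U$ passes to $\nabla^2\psi_n$.
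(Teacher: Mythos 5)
Your proof is correct and follows the same overall strategy as the paper's: identify the partial derivatives of $\psi_n$ with the conditional expectation $\E_n\partial_i U$ evaluated at $\sum_j\xi_je_j$, and then obtain the Lipschitz bound on $\grad\psi_n$ by pushing the pointwise $H$-Lipschitz estimate for $\nabla_H U$ through the integral with Jensen's inequality, using that $\xi\mapsto\sum_i\xi_ie_i$ is an isometry into $H$. The one place where you genuinely diverge is the justification of differentiation under the integral sign. You make the difference quotients monotone via convexity and invoke dominated convergence, which forces you to manufacture an integrable majorant — hence the Fubini argument producing a good slice $\xi_0$ and the transport of integrability to arbitrary $\xi$ through estimate \eqref{Hlip}. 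The paper avoids this machinery: under Hypothesis \ref{ipotesi peso 2} the map $t\mapsto U(x+th)$ is $\con^1$ with Lipschitz derivative for \emph{every} $x$ and $h$, so the fundamental theorem of calculus along $H$ (formula \eqref{teo fond calcolo}) represents the increment exactly, and the $H$-Lipschitz estimate then bounds
\[\abs{\frac{\psi_n(\xi+sd_i)-\psi_n(\xi)}{s}-\E_n\partial_iU\pa{\sum_{j=1}^n\xi_je_j}}\leq\frac{\abs{s}}{2}[\nabla_H U]_{H\text{-}\lip}\]
\emph{uniformly in} $y$, so no integrable majorant is needed and the convergence of the quotient comes for free; the only integrability check is that $\E_n\partial_iU(x)$ is finite, which the paper gets by comparing $\nabla_HU(P_nx+(I-P_n)y)$ with $\nabla_HU(y-P_ny)$ — essentially your transport argument with the reference slice taken on $(I-P_n)X$ and \cite[Theorem 5.11.2]{Bog98} in place of Fubini. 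Both routes are valid; the paper's is shorter because it exploits the everywhere-differentiability in Hypothesis \ref{ipotesi peso 2} rather than convexity. Your closing ``softer variant'' via the $W^{2,t}$-regularity of $\E_nU$ rests on the unproved assertion that a finite convex function with absolutely continuous distributional Hessian is $\con^1$, so it is rightly kept as an aside rather than the main argument.
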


\begin{proof}
Let $\set{d_i\tc\ i=1,\ldots,n}$ be the canonical basis of $\R^n$. We will prove that $\psi_n$ admits derivative along $d_i$ for every $i=1,\ldots,n$ and that the gradient is Lipschitz continuous. This implies that
$\psi_n$ is continuous.

First of all we prove that for every $i\in\N$ the function $\E_n\partial_i U(x)$ is finite everywhere. For every $x\in X$
\begin{gather*}
\abs{\E_n\partial_iU(x)}\leq\int_X\abs{\partial_iU\pa{P_nx+(I-P_n)y}}d\mu(y)\leq\\\leq \int_X\abs{\partial_iU\pa{P_nx+(I-P_n)y}}^2d\mu(y)\leq
\int_X\abs{\nabla_HU\pa{P_nx+(I-P_n)y}}_H^2d\mu(y)\leq\\
\leq 2\int_X\abs{\nabla_HU\pa{P_nx+(I-P_n)y}-\nabla_HU(y-P_ny)}_H^2d\mu(y)+2\int_X\abs{\nabla_HU(y-P_ny)}_H^2d\mu(y)\leq\\
\leq 2[\nabla_HU]_{H\text{-}\lip}^2\int_X\abs{P_nx}_H^2d\mu(y)+2\int_X\abs{\nabla_H U(y-P_ny)}^2_Hd\mu(y)=\\
=2[\nabla_HU]_{H\text{-}\lip}^2\abs{P_nx}_H^2+2\int_{(I-P_n)X}\abs{\nabla_H U(z)}^2_Hd\tilde{\mu}_n(z).
\end{gather*}
The last term of this chain of inequalities is finite, indeed $\nabla_HU$ is $\tilde{H}_n$-Lipschitz continuous, by formula \eqref{cameron martin di proiezione}, and the conclusion follows from \cite[Theorem 5.11.2]{Bog98}.

Since $U$ is continuous and for every $x\in X$ the function $U$ is differentiable along $H$ at $x$, with $H$-Lipschitz gradient along $H$, then for every $x\in X$ and $h\in H$, the function
\(F_{x,h}(t):=U(x+th)\)
belongs to $\con^1[0,1]$. Indeed $F_{x,h}'(t)=\gen{\nabla_H U(x+th),h}_H$ and for every $t_1,t_2\in (0,1)$ we have
\(|F_{x,h}'(t_1)-F_{x,h}'(t_2)|\leq [\nabla_HU]_{H\text{-}\lip}\abs{h}_H^2\abs{t_1-t_2}\). So $F_{x,h}'$ is Lipschitz continuous. So by the fundamental theorem of calculus we get
\begin{gather}\label{teo fond calcolo}
U(x+h)-U(x)=\int_0^1\gen{\nabla_H U(x+th),h}_H dt.
\end{gather}
Now we get for every $i=1,\ldots,n$ and $s\in (0,1)$
\begin{gather*}
\abs{\frac{\psi_n(\xi+sd_i)-\psi_n(\xi)}{s}-\E_n \partial_i U\pa{\sum_{j=1}^n\xi_je_j}}
=\left|\frac{1}{s}\int_X U\pa{\sum_{j=1}^n\pa{\xi_j +s\delta_{ij}}e_j +(I-P_n) y}\right.+\\
\left.-U\pa{\sum_{j=1}^n\xi_je_j +(I-P_n) y}d\mu(y)-\E_n \partial_i U\pa{\sum_{j=1}^n\xi_je_j}\right|=\\
=\left|\frac{1}{s}\int_X\int_0^1\gen{\nabla_H U\pa{\sum_{j=1}^n\xi_je_j +(I-P_n) y+st\cdot e_i},s\cdot e_i}_H dt d\mu(y)-\E_n \partial_i U\pa{\sum_{j=1}^n\xi_je_j}\right|=\\
=\left|\frac{1}{s}\int_X\int_0^1\left\langle\nabla_H U\pa{\sum_{j=1}^n\xi_je_j +(I-P_n) y +st\cdot e_i}
-\nabla_H U\pa{\sum_{j=1}^n\xi_je_j +(I-P_n) y},s\cdot e_i\right\rangle_H dt d\mu(y)\right|\leq\\
\leq \abs{s}[\nabla_H U]_{H\text{-}\lip}\int_X\int_0^1t dtd\mu(y)=\frac{\abs{s}}{2}[\nabla_H U]_{H\text{-}\lip},
\end{gather*}
which goes to zero as $s\ra 0$. So $D_i\psi_n(\xi)=(\E_n\partial_i U)(\sum_{j=1}^n\xi_je_j)$. Finally, for every $\xi,\eta\in \R^n$
\begin{gather*}
\abs{\grad\psi_n(\xi)-\grad\psi_n(\eta)}^2=\sum_{i=1}^n\abs{D_i\psi_n(\xi)-D_i\psi_n(\eta)}^2=\\
=\sum_{i=1}^n\abs{\int_X\partial_i U\pa{\sum_{j=1}^n\xi_ie_i+(I-P_n)y}-\partial_i U\pa{\sum_{j=1}^n\eta_ie_i+(I-P_n)y}d\mu(y)}^2\leq\\
\leq \int_X\abs{\nabla_H U\pa{\sum_{j=1}^n\xi_ie_i+(I-P_n)y}-\nabla_H U\pa{\sum_{j=1}^n\eta_ie_i+(I-P_n)y}}_H^2d\mu(y)\leq\\
\leq [\nabla_H U]_{H\text{-}\lip}^2\int_X\sum_{i=1}^n(\xi_i-\eta_i)^2d\mu(y)= [\nabla_H U]_{H\text{-}\lip}^2\abs{\xi-\eta}^2.
\end{gather*}
So we have $\abs{\grad\psi_n(\xi)-\grad\psi_n(\eta)}\leq [\nabla_H U]_{H\text{-}\lip}\abs{\xi-\eta}$, for every $\xi,\eta\in\R^n$.
\end{proof}

Now we mollify the functions $\psi_n$. Fix $\eps>0$ and $\theta\in\con^\infty_b(\R^n)$ with support contained in the unit ball and $\int_{\R^n}\theta(\xi)d\xi=1$. Let
\[\psi_n^\eps(\xi)=\int_{\R^n}\psi_n(\xi-\eps\eta)\theta(\eta)d\eta.\]
Then $\psi_n^\eps$ is convex, it belongs to $\con^{\infty}_b(\R^n)$ and $\grad \psi_n^\eps$ is Lipschitz continuous. For $\lambda>0$ consider the problem
\begin{gather}\label{equazione finito dimensional}
\lambda v^\eps_n(\xi)-\OU^{(n,\eps)}_\nu v_n^\eps(\xi)=\varphi(\pi_{N_0}\xi),\qquad \xi\in\R^n,
\end{gather}
where $\pi_{N_0}:\R^n\ra\R^{N_0}$ is the projection on the first $N_0$ coordinates, and $\OU^{(n,\eps)}_\nu$ is the following operator:
\begin{gather}\label{eq Lcorsivo_nu^n}
\OU^{(n,\eps)}_\nu v=\sum_{i=1}^nD_{ii}v-\sum_{i=1}^n(D_i\psi_n^\eps+\xi_i) D_iv,\qquad v\in \con_b^2(\R^n).
\end{gather}
By Proposition \ref{Teo holder} we know that equation (\ref{equazione finito dimensional}) admits a unique solution $v_n^\eps$ belonging to $\con^{\infty}(\R^n)\cap\bigcup_{\gamma\in(0,1)}\con^{2+\gamma}_b(\R^n)$.

\begin{pro}\label{Soluzion e' c3b}
$v_n^\eps$ belongs to $\con^3_b(\R^n)$.
\end{pro}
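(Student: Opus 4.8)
The idea is to bootstrap one order of regularity by differentiating the finite-dimensional equation \eqref{equazione finito dimensional} and applying the Schauder theory of Section \ref{Finite dimension properties} to the differentiated problem, the delicate point being that the differentiated equation must be solved in $\con^{2+\gamma}_b(\R^n)$, not merely locally.

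Recall first that, by Proposition \ref{Teo holder}, $v_n^\eps\in\con^\infty(\R^n)\cap\con^{2+\gamma}_b(\R^n)$ for every $\gamma\in(0,1)$; fix such a $\gamma$ and an index $k\in\{1,\dots,n\}$. Since $v_n^\eps$ is smooth, differentiating \eqref{equazione finito dimensional} in the direction $\xi_k$ shows that $w:=D_k v_n^\eps\in\con^\infty(\R^n)$ solves classically
\[\lambda w-\OU^{(n,\eps)}_\nu w=g_k,\qquad g_k:=D_k(\varphi\circ\pi_{N_0})-D_k v_n^\eps-\sum_{i=1}^n (D_{ik}\psi_n^\eps)\,D_i v_n^\eps .\]
I would then verify that $g_k\in\con^\gamma_b(\R^n)$: the first summand is smooth with all derivatives bounded; each $D_i v_n^\eps$ is bounded and Lipschitz (because $v_n^\eps\in\con^{2+\gamma}_b(\R^n)$), hence lies in $\con^\gamma_b(\R^n)$; and, since $\grad\psi_n^\eps$ is Lipschitz, $D^2\psi_n^\eps$ is bounded, while differentiating under the integral sign in the definition of $\psi_n^\eps$ shows that $D^3\psi_n^\eps$ is bounded as well, so each $D_{ik}\psi_n^\eps$ is bounded and Lipschitz, hence in $\con^\gamma_b(\R^n)$. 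As $\con^\gamma_b(\R^n)$ is stable under sums and products of its bounded elements, $g_k\in\con^\gamma_b(\R^n)$.

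Next I would apply Proposition \ref{Teo holder} to the operator $\OU^{(n,\eps)}_\nu$ (whose potential $\psi_n^\eps$ has Lipschitz gradient) with datum $g_k$, obtaining a \emph{unique} $\tilde w\in\con^{2+\gamma}_b(\R^n)$ with $\lambda\tilde w-\OU^{(n,\eps)}_\nu\tilde w=g_k$, and identify $w$ with $\tilde w$. The difference $z:=w-\tilde w$ is a bounded $\con^2$ (in fact $\con^\infty$) solution of $\lambda z-\OU^{(n,\eps)}_\nu z=0$. To conclude $z\equiv0$ I would use Proposition \ref{principio massimo} applied to $\zeta(t,\xi):=e^{\lambda t}z(\xi)$: this $\zeta$ is bounded on $[0,T]\times\R^n$ for each $T>0$, belongs to $\con([0,T]\times\R^n)\cap\con^{1,2}((0,T]\times\R^n)$, and satisfies $D_t\zeta-\OU^{(n,\eps)}_\nu\zeta\equiv0\le0$ with $\zeta(0,\cdot)=z$. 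If $\sup_{\R^n}z>0$, then $\sup\zeta>0$ and Proposition \ref{principio massimo} forces $e^{\lambda t}\sup_{\R^n}z=\sup_{\R^n}\zeta(t,\cdot)\le\sup_{\R^n}z$ for every $t\in(0,T]$, which is impossible; hence $z\le0$, and applying the same argument to $-z$ gives $z\ge0$, so $z\equiv0$ and $D_k v_n^\eps=w=\tilde w\in\con^{2+\gamma}_b(\R^n)$.

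Since this holds for every $k\in\{1,\dots,n\}$, all third-order partial derivatives of $v_n^\eps$ are bounded and $\gamma$-H\"older continuous, so $v_n^\eps\in\con^{3+\gamma}_b(\R^n)\subseteq\con^3_b(\R^n)$. I expect the identification $w=\tilde w$ to be the crux: because the drift of $\OU^{(n,\eps)}_\nu$ grows linearly, a direct interior Schauder estimate on balls yields constants that blow up at infinity and does not by itself bound $D^2 w$ globally, whereas the uniqueness of bounded classical solutions — recovered via the exponential-in-time device together with Proposition \ref{principio massimo}, which encodes the inward-pointing drift — closes the gap. A subsidiary technical check, needed to keep $g_k$ in $\con^\gamma_b(\R^n)$ rather than only locally H\"older, is the global boundedness of $D^3\psi_n^\eps$.
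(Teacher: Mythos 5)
Your proof is correct and follows the same route as the paper: differentiate \eqref{equazione finito dimensional}, check that the resulting right-hand side lies in $\con^\gamma_b(\R^n)$, and apply Proposition \ref{Teo holder} to conclude that each $D_k v_n^\eps$ belongs to $\con^{2+\gamma}_b(\R^n)$. The paper's own proof is three lines and in particular takes for granted the identification of $D_k v_n^\eps$ with the unique $\con^{2+\gamma}_b(\R^n)$ solution of the differentiated equation, whereas you justify it via the maximum principle of Proposition \ref{principio massimo}; that extra step is sound and arguably needed, since $D_k v_n^\eps$ is not a priori known to lie in the uniqueness class $\con^{2+\gamma}_b(\R^n)$.
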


\begin{proof}
We just need to prove that the third order derivatives are bounded. We start by differentiating equation (\ref{equazione finito dimensional}),
\begin{gather*}
(1+\lambda)D_jv_n^\eps(\xi)-\OU^{(n,\eps)}_\nu D_jv_n^\eps(\xi)=D_j\varphi(\pi_{N_0}\xi)-\sum_{i=1}^nD_jD_i\psi_n^\eps(\xi)D_iv_n^\eps(\xi)\qquad \text{ for }1\leq j\leq N_0;\\
(1+\lambda)D_jv_n^\eps(\xi)-\OU^{(n,\eps)}_\nu D_jv_n^\eps(\xi)=-\sum_{i=1}^nD_jD_i\psi_n^\eps(\xi)D_iv_n^\eps(\xi)\qquad \text{ for }j=N_0+1,\ldots, n.
\end{gather*}
In both equations the right hand side is Lipschitz continuous and bounded. By Proposition \ref{Teo holder} we get $D_jv_n^\eps\in\bigcup_{\gamma\in(0,1)}\con^{2+\gamma}_b(\R^n)$ for every $j=1,\ldots,n$. In particular $v_n^\eps$ belongs to $\con^3_b(\R^n)$.
\end{proof}

We return to infinite dimension. Set
\[U_n^\eps(x):=\psi_n^\eps(\hat{e}_1(x),\ldots,\hat{e}_n(x)),\qquad V_n^\eps(x):=v_n^\eps(\hat{e}_1(x),\ldots, \hat{e}_n(x)),\qquad x\in X.\]
Let $\nu_{n}^\eps=e^{-U_{n}^\eps}\mu$. The operator $L^{(n,\eps)}_{\nu}$ is defined as $L$, namely
\begin{gather*}
D(L_\nu^{(n,\eps)})=\bigg\{u\in W^{1,2}(X,\nu_n^{\eps})\,|\, \text{there exists }g\in\elle^2(X,\nu_n^\eps)\text{ such that }\\
\left.\int_X\gen{\nabla_H u,\nabla_H \varrho}_Hd\nu_n^\eps=-\int_Xg\varrho d\nu_n^\eps\text{ for every }\varrho\in\fcon^\infty_b(X)\right\},
\end{gather*}
and if $u\in D(L_\nu^{(n,\eps)})$ we let $L_\nu^{(n,\eps)}u=g$. It is easily seen that $\fcon^2_b(X)\subseteq D(L_\nu^{(n,\eps)})$ for every $n\in\N$ and $\eps>0$. Furthermore if $Z\in\fcon^2_b(X)$ is such that $Z(x)=\omega(\hat{e}_1(x),\ldots,\hat{e}_k(x))$ for some $k\in\N$ and $\omega\in\con^2_b(\R^k)$, then
\begin{gather}\label{eq L_nu^n}
L_\nu^{(n,\eps)} Z=\sum_{i=1}^k\partial_{ii}Z-\sum_{i=1}^k(\partial_i U_n^\eps+\hat{e}_i) \partial_iZ.
\end{gather}

\begin{pro}
Assume Hypothesis \ref{ipotesi peso 2} holds. The function $V_n^\eps\in D(L_\nu)\cap D(L_\nu^{(n,\eps)})$. For every $x\in X$
\begin{gather}
\lambda V_n^\eps-L_\nu V_n^\eps=f+\gen{\nabla_H U-\nabla_HU_n^\eps,\nabla_H V_n^\eps}_H;\label{Equazione soddisfatta da perturbazione}\\
\label{equazione inutile}L_\nu^{(n,\eps)}V_n^\eps(x)=\OU_\nu^{(n,\eps)}v_n^\eps(\hat{e}_1(x),\ldots,\hat{e}_n(x));
\end{gather}
Moreover the following inequality holds for every $x\in X$
\begin{gather}
\abs{\nabla_H V_n^\eps(x)}_H\leq \sqrt{\frac{\pi}{\lambda}}\norm{\varphi}_\infty.\label{Stime di schauder infinito dimensionali}
\end{gather}
\end{pro}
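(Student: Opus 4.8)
The plan is to push everything down to the finite-dimensional estimates of Section \ref{Finite dimension properties}, exploiting that $V_n^\eps$ is a cylindrical function. By Proposition \ref{Soluzion e' c3b}, $v_n^\eps\in\con^3_b(\R^n)$, so $V_n^\eps\in\fcon^3_b(X)\subseteq\fcon^2_b(X)$; hence $V_n^\eps\in D(L_\nu^{(n,\eps)})$, and since Hypothesis \ref{ipotesi peso 2} forces $U\in W^{2,t}(X,\mu)$ by \cite[Theorem 5.11.2]{Bog98}, formula \eqref{formula Lnu} applies and shows $L_\nu V_n^\eps\in\elle^2(X,\nu)$, so $V_n^\eps\in D(L_\nu)$ as well. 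Writing $\xi(x):=(\hat{e}_1(x),\dots,\hat{e}_n(x))$, the chain rule (and $\hat{e}_j(x+te_i)=\hat{e}_j(x)+t\delta_{ij}$) gives $\partial_i V_n^\eps(x)=D_iv_n^\eps(\xi(x))$, $\partial_{ii}V_n^\eps(x)=D_{ii}v_n^\eps(\xi(x))$ and $\partial_i U_n^\eps(x)=D_i\psi_n^\eps(\xi(x))$ for $1\le i\le n$, all vanishing for $i>n$; then identity \eqref{equazione inutile} is exactly formula \eqref{eq L_nu^n} with $k=n$ and $\omega=v_n^\eps$, whose right-hand side collapses by \eqref{eq Lcorsivo_nu^n} to $\OU^{(n,\eps)}_\nu v_n^\eps(\xi(x))$.

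For \eqref{Equazione soddisfatta da perturbazione} I would apply formula \eqref{formula Lnu} to $V_n^\eps$ (only the first $n$ terms survive) and split the drift coefficient as $\partial_i U=\partial_i U_n^\eps+(\partial_i U-\partial_i U_n^\eps)$, obtaining
\[
L_\nu V_n^\eps(x)=\Bigl(\sum_{i=1}^n D_{ii}v_n^\eps(\xi(x))-\sum_{i=1}^n(D_i\psi_n^\eps(\xi(x))+\hat{e}_i(x))D_iv_n^\eps(\xi(x))\Bigr)-\sum_{i=1}^n\bigl(\partial_i U(x)-\partial_i U_n^\eps(x)\bigr)\partial_i V_n^\eps(x).
\]
The bracket is $\OU^{(n,\eps)}_\nu v_n^\eps(\xi(x))$, which by \eqref{equazione finito dimensional} equals $\lambda v_n^\eps(\xi(x))-\varphi(\pi_{N_0}\xi(x))=\lambda V_n^\eps(x)-f(x)$ since $n>N_0$; the last sum equals $-\langle\nabla_H U(x)-\nabla_H U_n^\eps(x),\nabla_H V_n^\eps(x)\rangle_H$ because $\partial_i V_n^\eps\equiv 0$ for $i>n$. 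Rearranging yields \eqref{Equazione soddisfatta da perturbazione}.

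Finally, for the gradient bound I would invoke Proposition \ref{stime di shauder} with $\phi=\psi_n^\eps$, which is convex, lies in $\con^\infty(\R^n)$ and has Lipschitz gradient (Proposition \ref{psi n} together with the mollification), applied to the classical solution $v_n^\eps$ of \eqref{equazione finito dimensional}; its right-hand side $\varphi\circ\pi_{N_0}$ belongs to $\con^\infty_b(\R^n)$ with $\|\varphi\circ\pi_{N_0}\|_\infty=\|\varphi\|_\infty$, so $|\grad v_n^\eps(\xi)|\le\sqrt{\pi/\lambda}\,\|\varphi\|_\infty$ for every $\xi\in\R^n$. Since $|\nabla_H V_n^\eps(x)|_H^2=\sum_{i=1}^n|D_iv_n^\eps(\xi(x))|^2=|\grad v_n^\eps(\xi(x))|^2$, inequality \eqref{Stime di schauder infinito dimensionali} follows. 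The one delicate point is the splitting in the second step: the coefficient $\partial_i U$ in \eqref{formula Lnu} is the genuine $H$-derivative of $U$, which differs from $D_i\psi_n^\eps$, and it is exactly this mismatch between the true drift $\nabla_H U$ and the regularized finite-dimensional drift $\nabla_H U_n^\eps$ that produces the perturbation term; once $V_n^\eps$ is recognized as cylindrical, everything else is bookkeeping.
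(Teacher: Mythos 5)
Your proposal is correct and follows exactly the route the paper takes: the paper's own proof simply notes that $V_n^\eps\in\fcon^3_b(X)$ gives membership in both domains, that \eqref{Equazione soddisfatta da perturbazione} and \eqref{equazione inutile} follow from \eqref{eq Lcorsivo_nu^n} and \eqref{eq L_nu^n} ``and some computations,'' and that the gradient bound is Proposition \ref{stime di shauder}. You have carried out precisely those computations (chain rule for the cylindrical function, splitting the drift $\partial_i U=\partial_i U_n^\eps+(\partial_i U-\partial_i U_n^\eps)$, and using \eqref{equazione finito dimensional}), and they are all accurate.
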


\begin{proof}
Since $V_n^\eps\in\fcon^3_b(X)$, $V_n^\eps\in D(L_\nu)\cap D(L_\nu^{(n,\eps)})$. Equality \eqref{Equazione soddisfatta da perturbazione} and equality \eqref{equazione inutile} follow from equality (\ref{eq Lcorsivo_nu^n}) and equality (\ref{eq L_nu^n}) and some computations. The moreover part is a consequence of Proposition \ref{stime di shauder}.
\end{proof}

\begin{pro}\label{Convergenza V_n^eps}
Assume Hypothesis \ref{ipotesi peso 2} holds. Then $\lambda V_n^{\frac{1}{n}}-L_\nu V_n^{\frac{1}{n}}$ converges to $f$ in $\elle^2(X,\nu)$ as $n$ goes to $+\infty$.
\end{pro}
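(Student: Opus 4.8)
The plan is to obtain the conclusion from the identity \eqref{Equazione soddisfatta da perturbazione}. Setting $\eps=1/n$ there, for every $x\in X$ one has
\[\lambda V_n^{1/n}(x)-L_\nu V_n^{1/n}(x)=f(x)+\gen{\nabla_H U(x)-\nabla_H U_n^{1/n}(x),\nabla_H V_n^{1/n}(x)}_H,\]
so it is enough to show that the last term tends to $0$ in $\elle^2(X,\nu)$. First I would use the uniform gradient bound \eqref{Stime di schauder infinito dimensionali}, namely $\abs{\nabla_H V_n^{1/n}(x)}_H\leq\sqrt{\pi/\lambda}\,\norm{\varphi}_\infty$, together with the Cauchy--Schwarz inequality in $H$; this reduces the problem to proving that $\nabla_H U_n^{1/n}\longra\nabla_H U$ in $\elle^2(X,\nu;H)$.

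To prove this convergence I would split, recalling that $\E_n U(x)=\psi_n(\hat e_1(x),\dots,\hat e_n(x))$,
\[\nabla_H U_n^{1/n}-\nabla_H U=\bigl(\nabla_H U_n^{1/n}-\nabla_H\E_n U\bigr)+\bigl(\nabla_H\E_n U-\nabla_H U\bigr),\]
and treat the two summands separately. For the first one, both $U_n^{\eps}$ and $\E_n U$ are smooth cylindrical functions of $(\hat e_1,\dots,\hat e_n)$, so at $\mu$-a.e.\ $x$ the quantity $\abs{\nabla_H U_n^{\eps}(x)-\nabla_H\E_n U(x)}_H$ equals $\abs{\grad\psi_n^{\eps}-\grad\psi_n}$ evaluated at $(\hat e_1(x),\dots,\hat e_n(x))$. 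Differentiating under the integral sign in the definition of $\psi_n^{\eps}$ gives $\grad\psi_n^{\eps}(\xi)=\int_{\R^n}\grad\psi_n(\xi-\eps\eta)\theta(\eta)\,d\eta$, whence, by the Lipschitz estimate for $\grad\psi_n$ from Proposition \ref{psi n} and $\supp\theta\subseteq\{\abs{\eta}\leq 1\}$, the \emph{uniform} bound $\abs{\grad\psi_n^{\eps}(\xi)-\grad\psi_n(\xi)}\leq[\nabla_H U]_{H\text{-}\lip}\,\eps$ holds for every $\xi\in\R^n$. Taking $\eps=1/n$ and integrating over the finite measure $\nu$ gives $\norm{\nabla_H U_n^{1/n}-\nabla_H\E_n U}_{\elle^2(X,\nu;H)}\leq[\nabla_H U]_{H\text{-}\lip}\,\nu(X)^{1/2}/n\longra 0$.

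The main obstacle is the second summand: Proposition \ref{convergenza En f} provides $\E_n U\to U$ only in $W^{1,q}(X,\mu)$, i.e.\ with respect to $\mu$, while we need convergence with respect to $\nu=e^{-U}\mu$. The plan is to fix an exponent $p$ with $\tfrac{t}{t-1}<p\leq\tfrac{t}{2}$ — such $p$ exists precisely because $t>3$ — and set $p'=p/(p-1)$, so that $p'<t$. By Hölder's inequality with exponents $p$ and $p'$,
\[\int_X\abs{\nabla_H\E_n U-\nabla_H U}_H^2\,d\nu\leq\norm{\nabla_H\E_n U-\nabla_H U}_{\elle^{2p}(X,\mu;H)}^2\,\norm{e^{-U}}_{\elle^{p'}(X,\mu)}.\]
Here $\norm{e^{-U}}_{\elle^{p'}(X,\mu)}<+\infty$, since $e^{-U}\in W^{1,r}(X,\mu)\subseteq\elle^{r}(X,\mu)$ for every $r<t$ (as recalled after Hypothesis \ref{ipotesi peso}) and $p'<t$; and $\norm{\nabla_H\E_n U-\nabla_H U}_{\elle^{2p}(X,\mu;H)}\longra 0$, because $2p\leq t$ forces $U\in W^{1,2p}(X,\mu)$ by Hypothesis \ref{ipotesi peso}, so Proposition \ref{convergenza En f} applies with $q=2p$. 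Combining the two summands yields $\nabla_H U_n^{1/n}\to\nabla_H U$ in $\elle^2(X,\nu;H)$, and together with the first paragraph this proves the statement. Everything hinges on this $\mu$-to-$\nu$ transfer, and it is exactly here that the hypothesis $t>3$ is exploited.
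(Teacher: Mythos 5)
Your proof is correct and follows essentially the same route as the paper: reduce via \eqref{Equazione soddisfatta da perturbazione} and the gradient bound \eqref{Stime di schauder infinito dimensionali} to the convergence $\nabla_H U_n^{1/n}\to\nabla_H U$ in $\elle^2(X,\nu;H)$, split through $\nabla_H\E_n U$, and transfer from $\mu$ to $\nu$ by H\"older using $e^{-U}\in\elle^{t/(t-2)}(X,\mu)$ (the paper takes exactly your endpoint $p=t/2$). The only cosmetic differences are that you bound the mollification error pointwise and use $\nu(X)<+\infty$ where the paper runs the same H\"older estimate again, and that $\E_n U$ is only $\con^1$ rather than smooth, which does not affect the argument.
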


\begin{proof}
Using equality (\ref{Equazione soddisfatta da perturbazione}) and inequality (\ref{Stime di schauder infinito dimensionali}) we get
\begin{gather*}
\int_X\abs{\lambda V_n^{\frac{1}{n}}(x)-L_\nu V_n^{\frac{1}{n}}(x)-f(x)}^2d\nu(x)=\int_X\abs{\gen{\nabla_H U(x)-\nabla_HU_n^{\frac{1}{n}}(x),\nabla_H V_n^{\frac{1}{n}}(x)}_H}^2d\nu(x)\leq\\
\leq \frac{\pi}{\lambda}\norm{\varphi}^2_\infty\int_X\abs{\nabla_H U(x)-\nabla_H U_n^{\frac{1}{n}}(x)}_H^2d\nu(x)\leq\\
\leq \frac{\pi}{\lambda}\norm{\varphi}^2_\infty\pa{\int_X\abs{\nabla_H U(x)-\nabla_H \E_nU(x)}_H^2d\nu(x)+\int_X\abs{\nabla_H \E_nU(x)-\nabla_H U_n^{\frac{1}{n}}(x)}_H^2d\nu(x)}.
\end{gather*}
We recall that due to Hypothesis \ref{ipotesi peso 2} we have $t>3$ and $e^{-U}$ belongs to $\elle^{\frac{t}{t-2}}(X,\mu)$ (see the discussion after Hypothesis \ref{ipotesi peso}). Then
\[\int_X\abs{\nabla_H U(x)-\nabla_H \E_nU(x)}_H^2d\nu(x)\leq \norm{e^{-U}}_{\elle^{\frac{t}{t-2}}(X,\mu)}\pa{\int_X\abs{\nabla_H U(x)-\nabla_H \E_nU(x)}_H^td\mu(x)}^{\frac{2}{t}};\]
by Proposition \ref{convergenza En f} the integral in the right hand side vanishes as $n\ra+\infty$.

Let $\mu_n=\mu\circ P_n^{-1}$ and let $[\grad \psi_n]_1$ be the Lipschitz constant of $\grad\psi_n$. By the change of variable formula (see \cite[Formula (A.3.1)]{Bog98}) and Proposition \ref{psi n} we get
\begin{gather*}
\int_X\abs{\nabla_H \E_nU(x)-\nabla_H U_n^{\frac{1}{n}}(x)}_H^2d\nu(x)\leq\\
\leq \norm{e^{-U}}_{\elle^{\frac{t}{t-2}}(X,\mu)}\pa{\int_{\R^n}\abs{\grad\psi_n(\xi)-\grad\psi_n^{\frac{1}{n}}(\xi)}^{t}d\mu_n(\xi)}^{\frac{2}{t}}\leq\\
\leq \norm{e^{-U}}_{\elle^{\frac{t}{t-2}}(X,\mu)}\pa{\int_{\R^n}\pa{\int_{\R^n}\abs{\grad\psi_n(\xi)-\grad\psi_n(\xi-n^{-1}\eta)}\theta(\eta)d\eta}^{t}d\mu_n(\xi)}^{\frac{2}{t}}\leq\\
\leq \norm{e^{-U}}_{\elle^{\frac{t}{t-2}}(X,\mu)}\pa{\frac{[\grad \psi_n]_{1}}{n}}^{2}\pa{\int_{\R^n}\abs{\eta}\theta(\eta)d\eta}^{2}\leq \norm{e^{-U}}_{\elle^{\frac{t}{t-2}}(X,\mu)}\pa{\frac{[\nabla_H U]_{H\text{-}\lip}}{n}}^{2}.
\end{gather*}
The last term of this chain of inequalities goes to zero as $n\ra+\infty$
\end{proof}

\begin{pro}\label{Densita}
Assume Hypothesis \ref{ipotesi peso 2} holds. Then $(\lambda I-L_\nu)(\fcon^3_b(X))$ is dense in $\elle^2(X,\nu)$.
\end{pro}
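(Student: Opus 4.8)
The plan is to combine Proposition~\ref{Convergenza V_n^eps} with the density of smooth cylindrical functions in $\elle^2(X,\nu)$ and a routine $\eps/3$-argument. Recall that the present subsection is set up with a fixed $f\in\fcon^\infty_b(X)$ of the special form $f(x)=\varphi(\hat e_1(x),\dots,\hat e_{N_0}(x))$ with $\varphi\in\con^\infty_b(\R^{N_0})$; for such an $f$, Proposition~\ref{Soluzion e' c3b} gives $V_n^{1/n}\in\fcon^3_b(X)$ (for $n>N_0$), and Proposition~\ref{Convergenza V_n^eps} gives $\lambda V_n^{1/n}-L_\nu V_n^{1/n}\to f$ in $\elle^2(X,\nu)$. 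Hence every $f$ of this special form lies in the $\elle^2(X,\nu)$-closure of $(\lambda I-L_\nu)(\fcon^3_b(X))$, and it suffices to prove that functions of this special form are dense in $\elle^2(X,\nu)$.

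To this end, fix $g\in\elle^2(X,\nu)$ and $\eps>0$. By \cite[Proposition 3.6]{Fer15} choose $f\in\fcon^\infty_b(X)$, say $f(x)=h(x_1^*(x),\dots,x_m^*(x))$, with $\norm{g-f}_{\elle^2(X,\nu)}<\eps/3$. Next I would replace $f$ by a conditional expectation: since $P_{N_0}x=\sum_{i=1}^{N_0}\hat e_i(x)e_i$, a direct computation gives
\[\E_{N_0}f(x)=\int_X h\Bigl(\,\sum_{i=1}^{N_0}\hat e_i(x)\,x_1^*(e_i)+x_1^*\bigl((I-P_{N_0})y\bigr),\ \dots\Bigr)\,d\mu(y),\]
which is a smooth cylindrical function of $\hat e_1(x),\dots,\hat e_{N_0}(x)$ whose symbol lies in $\con^\infty_b(\R^{N_0})$ (differentiate under the integral sign, using that $h$ and all its derivatives are bounded and $\mu$ is a probability measure); thus $\E_{N_0}f$ has the required special form. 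By Proposition~\ref{convergenza En f}, $\E_{N_0}f\to f$ $\mu$-a.e.\ (hence $\nu$-a.e.) as $N_0\to+\infty$, and $\abs{\E_{N_0}f}\leq\norm{f}_\infty$ for every $N_0$; since $\nu$ is finite (because $e^{-U}\in\elle^r(X,\mu)$ for $r<t$), dominated convergence yields $\E_{N_0}f\to f$ in $\elle^2(X,\nu)$. Pick $N_0$ with $\norm{f-\E_{N_0}f}_{\elle^2(X,\nu)}<\eps/3$; applying the first paragraph to $\E_{N_0}f$, choose $n$ large with $\norm{\E_{N_0}f-(\lambda V_n^{1/n}-L_\nu V_n^{1/n})}_{\elle^2(X,\nu)}<\eps/3$. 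Then $\norm{g-(\lambda V_n^{1/n}-L_\nu V_n^{1/n})}_{\elle^2(X,\nu)}<\eps$ with $V_n^{1/n}\in\fcon^3_b(X)$, and the density follows.

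I do not expect a genuine obstacle: the substantial analytic work is already contained in Propositions~\ref{psi n}--\ref{Convergenza V_n^eps}, and what remains is organizational. The only two points needing a moment of care are checking that $\E_{N_0}f$ really has a $\con^\infty_b$ symbol --- handled by differentiating under the integral sign as above --- and transferring the convergence $\E_{N_0}f\to f$ from the $\mu$-a.e.\ (equivalently $\elle^2(X,\mu)$) setting of Proposition~\ref{convergenza En f} to $\elle^2(X,\nu)$, for which the uniform boundedness of the cylindrical function $f$ together with the finiteness of $\nu$ is exactly what is used.
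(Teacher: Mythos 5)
Your proposal is correct and follows essentially the same route as the paper, whose proof is a one-line citation of Proposition~\ref{Convergenza V_n^eps} together with the density of $\fcon^\infty_b(X)$ in $\elle^2(X,\nu)$. The only difference is that you explicitly carry out the reduction from a general $f\in\fcon^\infty_b(X)$ (built on arbitrary $x_1^*,\dots,x_m^*\in X^*$) to the special form $\varphi(\hat e_1(x),\dots,\hat e_{N_0}(x))$ via conditional expectations and dominated convergence, a step the paper leaves implicit; your treatment of it is correct.
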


\begin{proof}
It follows from Proposition \ref{Convergenza V_n^eps} and the density of the space $\fcon^\infty_b(X)$ in $\elle^2(X,\nu)$ (see \cite[Proposition 3.6]{Fer15}).
\end{proof}

\begin{pro}\label{existence of strong solution}
Assume Hypothesis \ref{ipotesi peso 2} holds. For every $\lambda >0$ and $f\in\elle^2(X,\nu)$, there exists a unique strong solution of equation (\ref{Problema}) in the sense of Definition \ref{definizione strong solution}.
\end{pro}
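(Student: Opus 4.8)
The plan is to combine a density argument with a uniqueness argument, both of which are essentially packaged by the preceding results. For existence, I would take an arbitrary $f\in\elle^2(X,\nu)$ and use Proposition \ref{Densita}: the space $(\lambda I-L_\nu)(\fcon^3_b(X))$ is dense in $\elle^2(X,\nu)$, so there exists a sequence $w_k\in\fcon^3_b(X)$ with $(\lambda I-L_\nu)w_k\to f$ in $\elle^2(X,\nu)$. The point is that the sequence $\{w_k\}$ is automatically Cauchy in $\elle^2(X,\nu)$, hence converges to some $u\in\elle^2(X,\nu)$, and then $\{w_k\}$ is a strong solution sequence for $u$ in the sense of Definition \ref{definizione strong solution}. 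The Cauchy property follows from an a priori $\elle^2$-contractivity estimate $\norm{w}_{\elle^2(X,\nu)}\le\lambda^{-1}\norm{(\lambda I-L_\nu)w}_{\elle^2(X,\nu)}$ valid for $w\in\fcon^3_b(X)\subseteq D(L_\nu)$, which in turn comes from testing the identity $\lambda w-L_\nu w=g$ against $w$ itself and using the integration by parts formula defining $L_\nu$: one gets $\lambda\int_X w^2\,d\nu+\int_X\abs{\nabla_H w}_H^2\,d\nu=\int_X gw\,d\nu\le\norm{g}_{\elle^2(X,\nu)}\norm{w}_{\elle^2(X,\nu)}$, whence the bound.

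For uniqueness, suppose $u$ and $\tilde u$ are two strong solutions of \eqref{Problema} with strong solution sequences $\{u_n\}$ and $\{\tilde u_n\}$ respectively. Then $u_n-\tilde u_n\in\fcon^3_b(X)$, it converges to $u-\tilde u$ in $\elle^2(X,\nu)$, and $(\lambda I-L_\nu)(u_n-\tilde u_n)\to 0$ in $\elle^2(X,\nu)$. Applying the same a priori contractivity estimate to $u_n-\tilde u_n$ gives $\norm{u_n-\tilde u_n}_{\elle^2(X,\nu)}\le\lambda^{-1}\norm{(\lambda I-L_\nu)(u_n-\tilde u_n)}_{\elle^2(X,\nu)}\to 0$, so $u=\tilde u$ $\nu$-a.e. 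This argument also shows that the strong solution, when it exists, does not depend on the choice of strong solution sequence.

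I expect the main (though still modest) obstacle to be the careful justification of the a priori estimate on $\fcon^3_b(X)$: one must be sure that $\fcon^3_b(X)\subseteq D(L_\nu)$ with $L_\nu$ acting by formula \eqref{formula Lnu}, that the integration by parts formula \eqref{int by part} applies to the relevant products (so that $\int_X (L_\nu w)w\,d\nu=-\int_X\abs{\nabla_H w}_H^2\,d\nu$), and that all integrals are finite — this uses Hypothesis \ref{ipotesi peso}, the membership $e^{-U}\in W^{1,r}(X,\mu)$ for $r<t$, and the boundedness of $w$ and $\nabla_H w$. Once this estimate is in hand the proof is a short two-paragraph argument: existence via completeness and Proposition \ref{Densita}, uniqueness via the estimate applied to differences.

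Concretely, I would write: \emph{Proof.} Fix $\lambda>0$ and $f\in\elle^2(X,\nu)$. For $w\in\fcon^3_b(X)$, testing $\lambda w-L_\nu w$ against $w$ and integrating by parts yields
\[
\lambda\int_X w^2\,d\nu+\int_X\abs{\nabla_H w}_H^2\,d\nu=\int_X(\lambda w-L_\nu w)\,w\,d\nu,
\]
so that $\norm{w}_{\elle^2(X,\nu)}\le\frac1\lambda\norm{\lambda w-L_\nu w}_{\elle^2(X,\nu)}$. By Proposition \ref{Densita} choose $u_n\in\fcon^3_b(X)$ with $\lambda u_n-L_\nu u_n\to f$ in $\elle^2(X,\nu)$; the displayed estimate applied to $u_n-u_m$ shows $\{u_n\}$ is Cauchy in $\elle^2(X,\nu)$, hence $u_n\to u$ for some $u\in\elle^2(X,\nu)$, and $\{u_n\}$ is a strong solution sequence for $u$. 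If $\tilde u$ is another strong solution with sequence $\{\tilde u_n\}$, applying the estimate to $u_n-\tilde u_n$ gives $\norm{u_n-\tilde u_n}_{\elle^2(X,\nu)}\le\frac1\lambda\norm{(\lambda I-L_\nu)(u_n-\tilde u_n)}_{\elle^2(X,\nu)}\to0$, so $u=\tilde u$. $\square$
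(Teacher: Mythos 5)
Your argument is correct, and it reaches the conclusion by a more hands-on route than the paper. The paper's proof uses the same two ingredients you identify — the dissipativity estimate $\int_X wL_\nu w\,d\nu=-\int_X\abs{\nabla_H w}_H^2\,d\nu\leq 0$ for $w\in\fcon^3_b(X)$ and the density of $(\lambda I-L_\nu)(\fcon^3_b(X))$ from Proposition \ref{Densita} — but instead of extracting the a priori bound and running the Cauchy-sequence argument by hand, it invokes the Lumer--Phillips theorem to conclude that the closure $\ol{L_\nu}$ generates a contraction semigroup and that $\fcon^3_b(X)$ is a core for it; existence and uniqueness of the strong solution then follow from invertibility of $\lambda I-\ol{L_\nu}$. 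Your version is essentially the relevant fragment of the Lumer--Phillips proof written out explicitly: it is more elementary and self-contained, and your uniqueness argument correctly shows independence of the choice of strong solution sequence. What the paper's formulation buys in exchange is the extra structural information (generation of a contraction semigroup, the core property), and the explicit identification $u\in D(\ol{L_\nu})\subseteq W^{1,2}(X,\nu)$, which is quoted at the start of the proof of Theorem \ref{Stime per lip}; note that your construction also yields this, since $u_n\to u$ and $L_\nu u_n\to\lambda u-f$ in $\elle^2(X,\nu)$ force $u\in D(\ol{L_\nu})$. Your point about justifying the energy identity on $\fcon^3_b(X)$ is well taken but unproblematic: $L_\nu$ is defined through the quadratic form on $W^{1,2}(X,\nu)$ and $\fcon^\infty_b(X)$ is dense there, so the identity extends from test functions in $\fcon^\infty_b(X)$ to $w$ itself.
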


\begin{proof}
First of all observe that $L_\nu:\fcon^3_b(X)\ra\elle^2(X,\nu)$ is a dissipative operator. Indeed, for every $u\in\fcon^3_b(X)$, we have
\[\int_X uL_\nu ud\nu\leq 0.\]
Combining Proposition \ref{Densita} and the Lumer--Phillips theorem (see \cite[Theorem 2.3.15]{EN06}), we get that the closure $\ol{L_\nu}$ of the operator $L_\nu$ generates a contraction semigroup and $\fcon^3_b(X)$ is a core for $\ol{L_\nu}$, i.e. it is dense in $D(\ol{L_\nu})$ with the graph norm. In particular for every $\lambda>0$ and $f\in\elle^2(X,\nu)$, equation (\ref{Problema}) has a unique strong solution $u\in D(\ol{L_\nu})$.
\end{proof}

We recall the following theorem (see \cite[Theorem 3.1(2)]{FU00}).

\begin{thm}\label{Hessiano e convessita}
Let $F\in W^{2,p}(X,\mu)$, for some $p>1$, be a convex function. Then $\nabla_H^2F$ is a positive Hilbert--Schmidt operator $\mu$-a.e., i.e. \(\langle\nabla_H^2F(x)h,h\rangle_H\geq 0\), for $\mu$-a.e. $x\in X$ and every $h\in H$.
\end{thm}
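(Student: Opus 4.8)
The plan is to reduce to the finite dimensional case, where this is classical. First I would recall the standard finite dimensional fact: if $\phi\in\con^2(\R^n)$ is convex, then its Hessian $D^2\phi(\xi)$ is a positive semidefinite symmetric matrix at every point, simply because for any direction $v$ the function $t\mapsto\phi(\xi+tv)$ is convex and $C^2$, so its second derivative $\langle D^2\phi(\xi)v,v\rangle$ is nonnegative. In particular $\langle D^2\phi(\xi)h,h\rangle\geq0$ for all $h$. The issue is to transfer this to the infinite dimensional, non-smooth setting.

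The strategy I would use is approximation by cylindrical functions. Since $F\in W^{2,p}(X,\mu)$, by Proposition \ref{convergenza En f} the conditional expectations $\E_n F$ converge to $F$ in $W^{2,p}(X,\mu)$, and in particular $\nabla_H^2\E_nF\to\nabla_H^2 F$ in $\elle^p(X,\mu;\mathcal{H}_2)$; passing to a subsequence we may assume the convergence is $\mu$-a.e. Now $\E_nF$ is essentially a function of the first $n$ variables $\hat e_1,\dots,\hat e_n$, and it is still convex (conditional expectation of a convex function along the decomposition $X=P_nX\oplus(I-P_n)X$ preserves convexity in the $P_nX$ variables). One can further mollify in these $n$ variables, as is done with $\psi_n^\eps$ in Proposition \ref{psi n}, to obtain smooth convex cylindrical functions $G$ whose finite dimensional Hessians are genuinely positive semidefinite matrices; by the observation above, $\langle\nabla_H^2 G(x)h,h\rangle_H=\langle D^2g(\text{coords})\,\hat h,\hat h\rangle\geq0$ for every $h\in H$ and every $x$. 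Taking the mollification parameter to zero and then $n\to\infty$, and using that $\nabla_H^2$ converges in $\elle^p$ hence $\mu$-a.e. along a subsequence, the pointwise inequality $\langle\nabla_H^2 F(x)h,h\rangle_H\geq0$ persists for $\mu$-a.e. $x$, first for a fixed $h$ and then — by continuity of $h\mapsto\langle\nabla_H^2F(x)h,h\rangle_H$ for fixed $x$ and a countable dense set of $h$'s — for all $h\in H$ simultaneously. That $\nabla_H^2 F(x)$ is Hilbert--Schmidt is already built into membership in $W^{2,p}(X,\mu)$.

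The main obstacle I anticipate is handling the two limiting procedures cleanly: one must make sure that the approximants $\E_n F$ (and their mollifications) are honestly convex as functions on the relevant finite dimensional subspace and that their finite dimensional Hessians are exactly the matrices representing $\nabla_H^2$ of the cylindrical function in the basis $\{e_i\}$, so that positivity transfers without loss. The measure-theoretic bookkeeping — choosing a single subsequence along which $\nabla_H^2$ of the approximants converges $\mu$-a.e., and swapping "for a.e. $x$, for all $h$" with "for all $h$, for a.e. $x$" via separability of $H$ — is routine but needs to be stated carefully. An alternative, perhaps cleaner, route is to invoke the known integration-by-parts / duality characterization of $\nabla_H^2$ directly: for $F\in W^{2,p}(X,\mu)$ and $h=e_i$, one has $\int_X\varphi\,\partial_{ii}F\,d\mu=\int_X F(\partial_{ii}\varphi-\hat e_i\partial_i\varphi+\dots)\,d\mu$ for test $\varphi$, and convexity of $F$ in the $e_i$ direction forces the sign of a suitable second-difference quotient, which one can pass to the limit; but I expect the approximation argument above to be the most transparent, and it is exactly in the spirit of the finite-dimensional reductions already used in this paper. (In any case, this is Theorem 3.1(2) of \cite{FU00}, so one may simply cite it.)
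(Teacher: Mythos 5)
The paper does not prove this statement at all: it is imported verbatim as \cite[Theorem 3.1(2)]{FU00}, so your closing remark that ``one may simply cite it'' is exactly what the authors do. Your self-contained sketch is nevertheless sound and is very much in the spirit of the paper's other finite-dimensional reductions (conditional expectations as in Proposition \ref{convergenza En f}, cylindrical representatives and mollification as in Proposition \ref{psi n}): $\E_nF$ is indeed convex, its cylindrical representative $\psi_n$ is a finite, hence locally Lipschitz, convex function on $\R^n$ lying in $W^{2,p}(\R^n,\mu_n)$, the matrix $(D_{ij}\psi_n)$ computed at $(\hat e_1(x),\dots,\hat e_n(x))$ is exactly the matrix of $\nabla_H^2\E_nF(x)$ in the basis $\{e_i\}$, and the ``a.e.\ $x$ for all $h$'' bookkeeping goes through by separability of $H$ and boundedness of a Hilbert--Schmidt operator. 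The only step you should not wave at is the convergence $D^2\psi_n^\eps\to D^2\psi_n$ as $\eps\to 0$: this happens in $L^{p'}$ of the Gaussian $\mu_n$ only after controlling the translated Gaussian density (e.g.\ for $p'<p$ via H\"older), since translation is not an isometry of $\elle^p(\R^n,\mu_n)$. A cleaner variant that avoids mollification entirely is to observe that for a convex $\psi_n\in W^{2,p}_{\mathrm{loc}}(\R^n)$ the distribution $\partial_{vv}\psi_n$ is nonnegative (test against $\phi\geq 0$ using second difference quotients) and coincides with the $\elle^p_{\mathrm{loc}}$ function $\langle D^2\psi_n v,v\rangle$, which is therefore nonnegative a.e.; then pass to the limit in $n$ only. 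What the citation to \cite{FU00} buys the authors is brevity; what your argument buys is a proof using only tools already present in the paper.
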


We will state now a regularity result when $U$ satisfies Hypothesis \ref{ipotesi peso 2}.

\begin{thm}\label{Stime per lip}
Let $U$ be a function satisfying Hypothesis \ref{ipotesi peso 2}. Let $\lambda>0$, $f\in\elle^2(X,\nu)$, and let $u$ be the strong solution of equation (\ref{Problema}). Then $u\in W^{2,2}(X,\nu)$ and
\begin{gather}
\label{1 stime max per lip}\norm{u}_{\elle^2(X,\nu)}\leq\frac{1}{\lambda}\norm{f}_{\elle^2(X,\nu)};\qquad \norm{\nabla_H u}_{\elle^2(X,\nu;H)}\leq\frac{1}{\sqrt{\lambda}}\norm{f}_{\elle^2(X,\nu)};\\
\label{2 stime max per lip}\norm{\nabla_H^2 u}_{\elle^2(X,\nu;\mathcal{H}_2)}\leq \sqrt{2}\norm{f}_{\elle^2(X,\nu)}.
\end{gather}
Moreover $u$ is a weak solution of equation (\ref{Problema}). Finally if $\set{u_n}_{n\in\N}\subseteq\fcon^3_b(X)$ is a strong solution sequence for $u$ (see Definition \ref{definizione strong solution}), then $u_n$ converges to $u$ in $W^{2,2}(X,\nu)$.
\end{thm}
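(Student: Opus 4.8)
The plan is to obtain the a~priori estimates first at the finite-dimensional, mollified level (where everything is smooth and classical), and then pass to the limit. Concretely, fix $f\in\fcon_b^\infty(X)$ of the cylindrical form $f(x)=\varphi(\hat e_1(x),\dots,\hat e_{N_0}(x))$ as in the setup, let $n>N_0$, $\eps>0$, and work with the classical solution $v_n^\eps\in\con_b^3(\R^n)$ of equation \eqref{equazione finito dimensional}. The key finite-dimensional computation is the Bochner--Lichnerowicz--type identity for $\OU_\nu^{(n,\eps)}$: testing the PDE with $v_n^\eps$ against the weighted Gaussian measure $\nu_n^\eps$ and integrating by parts twice yields, after using that $\grad^2\psi_n^\eps\ge 0$ (convexity of $\psi_n^\eps$, hence of its mollification), the three estimates
\begin{gather*}
\int_{\R^n}\abs{v_n^\eps}^2\,d\nu_n^\eps\le\frac1{\lambda^2}\int_{\R^n}\abs{\varphi\circ\pi_{N_0}}^2\,d\nu_n^\eps,\qquad
\int_{\R^n}\abs{\grad v_n^\eps}^2\,d\nu_n^\eps\le\frac1\lambda\int_{\R^n}\abs{\varphi\circ\pi_{N_0}}^2\,d\nu_n^\eps,\\
\int_{\R^n}\sum_{i,j}(D_{ij}v_n^\eps)^2\,d\nu_n^\eps\le 2\int_{\R^n}\abs{\varphi\circ\pi_{N_0}}^2\,d\nu_n^\eps.
\end{gather*}
These transfer verbatim to $V_n^\eps$ on $(X,\nu_n^\eps)$ via the change-of-variables formula, and the convexity hypothesis on $U$ is exactly what makes the Hessian term in the integration-by-parts identity have the favorable sign (this is the role of Theorem \ref{Hessiano e convessita} at the infinite-dimensional stage).

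The second step is to replace $\nu_n^\eps$ by $\nu$ in these estimates for $V_n^{1/n}$. Here I would use that $U_n^{1/n}\to U$ in a strong enough sense — $\E_n U\to U$ in $W^{1,t}(X,\mu)$ by Proposition \ref{convergenza En f}, the mollification error is $O(1/n)$ by Proposition \ref{psi n}, and $e^{-U}\in\elle^{t/(t-2)}(X,\mu)$ — so that $e^{-U_n^{1/n}}\to e^{-U}$ in $\elle^1(X,\mu)$ and the measures $\nu_n^{1/n}$ converge to $\nu$ suitably; combined with the uniform sup bound \eqref{Stime di schauder infinito dimensionali} on $\nabla_H V_n^{1/n}$ and dominated convergence, the right-hand sides $\int\abs{\varphi}^2\,d\nu_n^{1/n}\to\int\abs\varphi^2\,d\nu$, and the left-hand side norms are controlled uniformly in $n$. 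Thus $\{V_n^{1/n}\}$ is bounded in $\elle^2(X,\nu)$, $\{\nabla_H V_n^{1/n}\}$ in $\elle^2(X,\nu;H)$, and $\{\nabla_H^2 V_n^{1/n}\}$ in $\elle^2(X,\nu;\mathcal H_2)$, with the constants $1/\lambda$, $1/\sqrt\lambda$, $\sqrt2$ times $\norm f_{\elle^2(X,\nu)}$ in the limit.

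The third step is the passage to the limit. By Proposition \ref{Convergenza V_n^eps}, $\lambda V_n^{1/n}-L_\nu V_n^{1/n}\to f$ in $\elle^2(X,\nu)$; since $\ol{L_\nu}$ generates a contraction semigroup (Proposition \ref{existence of strong solution}), the resolvent $R(\lambda,\ol{L_\nu})$ is bounded, so $V_n^{1/n}\to u:=R(\lambda,\ol{L_\nu})f$ in $\elle^2(X,\nu)$, where $u$ is the strong solution. The uniform $W^{2,2}$-bounds then force (up to subsequences, then by uniqueness of the limit, for the whole sequence) $\nabla_H V_n^{1/n}\rightharpoonup \nabla_H u$ and $\nabla_H^2 V_n^{1/n}\rightharpoonup \nabla_H^2 u$ weakly, and since $(\nabla_H,\nabla_H^2)$ is closed (Proposition \ref{chiusura hessiano}) one concludes $u\in W^{2,2}(X,\nu)$; weak lower semicontinuity of the norms gives \eqref{1 stime max per lip}--\eqref{2 stime max per lip}. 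In fact the Bochner identity, applied to the difference of two strong-solution-sequence elements, shows the convergence is strong, so any strong solution sequence converges to $u$ in $W^{2,2}(X,\nu)$; that $u$ is also a weak solution follows by testing $\lambda V_n^{1/n}-L_\nu V_n^{1/n}=\cdots$ against $\varphi\in\fcon_b^\infty(X)$, integrating the $L_\nu$ term by parts, and letting $n\to\infty$. Finally the general $f\in\elle^2(X,\nu)$ is handled by density of such cylindrical $f$ together with the boundedness of $R(\lambda,\ol{L_\nu})$ and the closedness of $(\nabla_H,\nabla_H^2)$.

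The main obstacle I expect is the second step: making rigorous that the estimates proved on $(X,\nu_n^\eps)$ survive the replacement of $\nu_n^\eps$ by $\nu$, because the weights $U_n^\eps$ are only $\elle^p$-close to $U$ rather than uniformly close, so one must carefully combine the $\elle^{t/(t-2)}$-integrability of $e^{-U}$, the $W^{1,t}$-convergence of $\E_n U$, the $O(1/n)$ mollification estimate from Proposition \ref{psi n}, and the dimension-free sup bound \eqref{Stime di schauder infinito dimensionali} — exactly the ingredients assembled in the proof of Proposition \ref{Convergenza V_n^eps} — to control both the right-hand side data and the Hessian term simultaneously. Once that is in place, the weak-compactness-plus-closedness argument of step three is routine.
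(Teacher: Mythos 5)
Your overall architecture is close to the paper's, but the step you yourself single out as the main obstacle --- transferring the a priori estimates from the measures $\nu_n^{\eps}$ to $\nu$ --- is a genuine gap, not just a technicality, and the ingredients you list do not close it. To turn a bound such as $\int_X\norm{\nabla_H^2V_n^{1/n}}_{\mathcal{H}_2}^2\,d\nu_n^{1/n}\leq 2\int_X\abs{\varphi}^2\,d\nu_n^{1/n}$ into the same bound with $\nu$ on the left-hand side you would need a uniform pointwise domination $e^{-U}\leq C\,e^{-U_n^{1/n}}$, i.e.\ $U_n^{1/n}\leq U+\log C$. Nothing of the sort follows from the $W^{1,t}(X,\mu)$-convergence of $\E_nU$ to $U$, from the $O(1/n)$ mollification estimate, or from $e^{-U}\in\elle^{t/(t-2)}(X,\mu)$: Jensen's inequality only gives $\E_n(e^{-U})\geq e^{-\E_nU}$, which is the wrong direction, and this is precisely why in the proof of the Main Theorem the authors approximate by Moreau--Yosida regularizations (for which $U_\alpha\leq U$ does hold, so $e^{-U}\leq e^{-U_\alpha}$) rather than by conditional expectations. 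Proposition \ref{Convergenza V_n^eps} is not a result of this type: there a single measure $\nu$ appears on both sides and the sup bound \eqref{Stime di schauder infinito dimensionali} suffices.

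The paper sidesteps the problem entirely, and you could too, by decoupling existence from the estimates. The finite-dimensional mollified problems are used only to prove that $(\lambda I-L_\nu)(\fcon^3_b(X))$ is dense in $\elle^2(X,\nu)$ (Proposition \ref{Densita}), hence via Lumer--Phillips that a strong solution sequence $\set{u_n}_{n\in\N}\subseteq\fcon^3_b(X)$ exists. The estimates are then derived directly against $\nu$ for arbitrary $u_n\in\fcon^3_b(X)$ with $f_n:=\lambda u_n-L_\nu u_n$: Hypothesis \ref{ipotesi peso 2} gives $U\in W^{2,t}(X,\mu)$, so formula \eqref{formula Lnu} and the integration by parts formula \eqref{int by part} hold for $\nu$ itself, and the Hessian term $\int_X\gen{\nabla_H^2U\,\nabla_Hu_n,\nabla_Hu_n}_H\,d\nu$ is nonnegative by Theorem \ref{Hessiano e convessita}. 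Applied to differences $u_n-u_m$ (with data $f_n-f_m$), these estimates show $\set{u_n}_{n\in\N}$ is Cauchy in $W^{2,2}(X,\nu)$, so closability of $(\nabla_H,\nabla_H^2)$ gives strong (not merely weak) convergence together with all the stated bounds --- which is also exactly what yields the final assertion that \emph{every} strong solution sequence converges to $u$ in $W^{2,2}(X,\nu)$. Your closing remark about applying the Bochner identity to differences of strong-solution-sequence elements is in fact this argument; if you carry it out against $\nu$ from the start, the problematic second step becomes unnecessary.
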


\begin{proof}
By Proposition \ref{existence of strong solution} a sequence $\set{u_n}_{n\in\N}\subseteq \fcon^3_b(X)$ and a function $u\in D(\ol{L_\nu})\subseteq W^{1,2}(X,\nu)$ exist such that $u_n$ converges to $u$ in $\elle^2(X,\nu)$ and
\[\elle^2(X,\nu)\text{-}\lim_{n\ra+\infty}\lambda u_n-L_\nu u_n=f.\]
Let $f_n:=\lambda u_n-L_\nu u_n$. By formula (\ref{formula Lnu}) and the fact that $U\in W^{2,t}(X,\mu)$, where $t$ is the same as in Hypothesis \ref{ipotesi peso}, we get $f_n\in W^{1,2}(X,\nu)$. Multiplying by $u_n$ and integrating we get
\begin{gather*}
\int_Xf_n(x)u_n(x)d\nu(x)=\lambda\int_Xu_n^2(x)d\nu(x)-\int_Xu_n(x)L_\nu u_n(x)(x)d\nu(x)=\\
=\lambda\int_Xu_n^2(x)d\nu(x)+\int_X\abs{\nabla_H u_n(x)}^2_Hd\nu(x).
\end{gather*}
Using the Cauchy--Schwarz inequality in the left hand side integral we get
\begin{gather}\label{label del referee}
\norm{u_n}_{\elle^2(X,\nu)}\leq\frac{1}{\lambda}\norm{f_n}_{\elle^2(X,\nu)};\qquad \norm{\nabla_H u_n}_{\elle^2(X,\nu;H)}\leq\frac{1}{\sqrt{\lambda}}\norm{f_n}_{\elle^2(X,\nu)}.
\end{gather}
Since $\set{u_n}_{n\in\N}$ and $\set{f_n}_{n\in\N}$ converge to $u$ and $f$, respectively, in $\elle^2(X,\nu)$ we get
\[\norm{u}_{\elle^2(X,\nu)}=\lim_{n\ra+\infty}\norm{u_n}_{\elle^2(X,\nu)}\leq\lim_{n\ra+\infty}\frac{1}{\lambda} \norm{f_n}_{\elle^2(X,\nu)}=\frac{1}{\lambda}\norm{f}_{\elle^2(X,\nu)}.\]
Moreover
\[\norm{\nabla_H u_n-\nabla_H u_m}_{\elle^2(X,\nu;H)}\leq\frac{1}{\sqrt{\lambda}}\norm{f_n-f_m}_{\elle^2(X,\nu)},\]
then $\{\nabla_H u_n\}_{n\in\N}$ is a Cauchy sequence in $\elle^2(X,\nu;H)$. By the closability of $\nabla_H$ in $\elle^2(X,\nu)$ it follows that $u\in W^{1,2}(X,\nu)$ and
\[\elle^2(X,\nu;H)\text{-}\lim_{n\ra+\infty}\nabla_H u_n=\nabla_H u.\]
Therefore
\[\norm{\nabla_H u}_{\elle^2(X,\nu;H)}=\lim_{n\ra+\infty}\norm{\nabla_H u_n}_{\elle^2(X,\nu;H)}\leq\lim_{n\ra+\infty}\frac{1}{\sqrt{\lambda}} \norm{f_n}_{\elle^2(X,\nu)}=\frac{1}{\sqrt{\lambda}}\norm{f}_{\elle^2(X,\nu)}.\]

Using formula \eqref{formula Lnu}, we differentiate the equality $\lambda u_n-L_\nu u_n=f_n$ with respect to the $e_j$ direction, we multiply the result by $\partial_j u$, sum over $j$ and finally integrate over $X$ with respect to $\nu$. We obtain
\begin{gather*}
(1+\lambda)\int_X\abs{\nabla_H u_n}_H^2d\nu+\int_X\norm{\nabla_H^2 u_n}_{\mathcal{H}_2}^2d\nu+\int_X\gen{\nabla_H^2 U\nabla_H u_n,\nabla_H u_n}_Hd\nu
=\int_Xf_n^2d\nu-\lambda\int_Xf_nu_nd\nu.
\end{gather*}
Using inequalities \eqref{label del referee} and Theorem \ref{Hessiano e convessita} we get
\[\norm{\nabla_H^2 u_n}_{\elle^2(X,\nu;\mathcal{H}_2)}\leq \sqrt{2}\norm{f_n}_{\elle^2(X,\nu)}.\]
We remark that
\[\norm{\nabla^2_H u_n-\nabla^2_H u_m}_{\elle^2(X,\nu;\mathcal{H}_2)}\leq\sqrt{2}\norm{f_n-f_m}_{\elle^2(X,\nu)},\]
then $\{\nabla^2_H u_n\}_{n\in\N}$ is a Cauchy sequence in $\elle^2(X,\nu;\mathcal{H}_2)$. By the closability of $\nabla^2_H$ in $\elle^2(X,\nu)$ it follows that $u\in W^{2,2}(X,\nu)$ and
\[\elle^2(X,\nu;\mathcal{H}_2)\text{-}\lim_{n\ra+\infty}\nabla_H^2 u_n=\nabla_H^2 u.\]
Therefore
\[\norm{\nabla^2_H u}_{\elle^2(X,\nu;\mathcal{H}_2)}=\lim_{n\ra+\infty}\norm{\nabla^2_H u_n}_{\elle^2(X,\nu;\mathcal{H}_2)}\leq\lim_{n\ra+\infty}\sqrt{2} \norm{f_n}_{\elle^2(X,\nu)}=\sqrt{2}\norm{f}_{\elle^2(X,\nu)},\]
and $\set{u_n}_{n\in\N}$ converges to $u$ in $W^{2,2}(X,\nu)$. 

Now we want to show that $u$ is a weak solution of equation (\ref{Problema}). Let $\varphi\in\fcon_b^\infty(X)$ and $n\in\N$, then
\[\lambda\int_X u_n \varphi d\nu-\int_X L_\nu u_n\varphi d\nu=\int_X f_n\varphi d\nu.\]
By the definition of $L_\nu$, we get
\begin{gather}
\lambda\int_X u_n \varphi d\nu+\int_X \gen{\nabla_H u_n,\nabla_H\varphi}_H d\nu=\int_X f_n\varphi d\nu.
\label{formulazione debole u_n}
\end{gather}
Since $\set{u_n}_{n\in\N}$ converges to $u$ in $W^{2,2}(X,\nu)$ we obtain
\[\lim_{n\ra+\infty}\lambda\int_X u_n \varphi d\nu=\lambda\int_X u \varphi d\nu,\]
and
\[\lim_{n\ra+\infty}\int_X \gen{\nabla_H u_n,\nabla_H\varphi}_H d\nu=\int_X \gen{\nabla_H u,\nabla_H\varphi}_H d\nu.\]
Since $\set{f_n}_{n\in\N}$ converges to $f$ in $\elle^2(X,\nu)$ we have
\[\lim_{n\ra+\infty}\int_X f_n \varphi d\nu=\int_X f \varphi d\nu.\]
Then taking the limit as $n$ goes to $+\infty$ in \eqref{formulazione debole u_n} we get that $u$ is a weak solution of equation \eqref{Problema}, i.e. for every $\varphi\in\fcon^\infty_b(X)$, we have
\(\lambda\int_X u \varphi d\nu+\int_X \gen{\nabla_H u,\nabla_H\varphi}_H d\nu=\int_X f\varphi d\nu\).
\end{proof}

\begin{remark}
The hypothesis of continuity of the function $U$, in Hypothesis \ref{ipotesi peso 2}, can be replaced by the weaker hypothesis of $H$-continuity, i.e. for $\mu$-a.e. $x\in X$
\[\lim_{H\ni h\ra 0}U(x+h)=U(x).\]
Anyway we will use the results of this section for the Moreau--Yosida approximations along $H$ of a function $U$ satisfying Hypothesis \ref{ipotesi peso}, that are continuous in our case.
\end{remark}

\subsection{The general case}

In this subsection we assume that $U$  satisfies Hypothesis \ref{ipotesi peso}. In this case we do not know if there exists a strong solution of equation \eqref{Problema}, but the Lax--Milgram theorem gives us a weak solution of equation \eqref{Problema}.

Let $\alpha\in(0,1]$ and let $U_\alpha$ be the Moreau--Yosida approximation along $H$ of $U$, defined in Section \ref{H-Moreau envelop}. Consider the measure
\begin{gather}
\nu_\alpha=e^{-U_\alpha}\mu.
\end{gather}

\begin{pro}\label{Integrabilita MY}
Let $\alpha\in(0,1]$. $U_\alpha$ satisfies Hypothesis \ref{ipotesi peso 2}. Moreover $e^{-U_\alpha}\in W^{1,p}(X,\mu)$, for every $p\geq 1$, and $U_\alpha\in W^{2,t}(X,\mu)$, where $t$ is given by Hypothesis \ref{ipotesi peso}.
\end{pro}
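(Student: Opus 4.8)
The plan is to verify each of the three assertions about $U_\alpha$ in turn, relying on the properties of the Moreau--Yosida approximation along $H$ established in Section \ref{H-Moreau envelop} together with the integrability tools from \cite{Bog98} and \cite{Fer15}.

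First I would check that $U_\alpha$ satisfies Hypothesis \ref{ipotesi peso 2}. Convexity of $U_\alpha$ is classical (it is an infimal convolution of two convex functions), and $U_\alpha:X\ra\R$ is real-valued and $\norm{\cdot}_X$-continuous since it is finite (by Proposition \ref{Convergenza MY}, $U_\alpha\leq U<+\infty$) and, being a supremum of affine continuous functions minus a constant, it inherits the needed continuity; more directly one observes $U_\alpha$ is $H$-Lipschitz-type bounded above by $U$ and below by an affine function via Proposition \ref{minorante affine}, and one can upgrade lower semicontinuity to continuity using convexity on the Banach space together with local boundedness. Differentiability of $U_\alpha$ along $H$ at every point, with $\nabla_H U_\alpha(x)=-\tfrac{1}{\alpha}P(x,\alpha)$, is exactly Proposition \ref{MY differentiable}; and $\nabla_H U_\alpha$ is $H$-Lipschitz with constant at most $1/\alpha$ because $x\mapsto P(x,\alpha)$ is $1$-Lipschitz from $X$ to $H$ by Proposition \ref{punto di minimo lip} (note $P_{x,\alpha}(h)=P(x+h,\alpha)$ is $1$-Lipschitz in $h\in H$, which is precisely $H$-Lipschitzness of $x\mapsto P(x,\alpha)$). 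It remains to see that $U_\alpha$ still belongs to $W^{1,t}(X,\mu)$ for the same $t>3$: this follows from Proposition \ref{Moreau in W22} applied with $p=t$, once we know $U_\alpha\in\elle^t(X,\mu)$, which holds because $0\leq$ (affine minorant) $\leq U_\alpha\leq U$ and $U\in W^{1,t}(X,\mu)\subseteq\elle^t(X,\mu)$, so $|U_\alpha|$ is dominated by an $\elle^t$ function. Actually Proposition \ref{Moreau in W22} gives the stronger $U_\alpha\in W^{2,t}(X,\mu)$ directly, which also settles the last assertion of the proposition.

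Next, for $e^{-U_\alpha}\in W^{1,p}(X,\mu)$ for every $p\geq 1$: since $\nabla_H U_\alpha$ is bounded in $H$-norm (indeed $|\nabla_H U_\alpha(x)|_H=\tfrac{1}{\alpha}|P(x,\alpha)|_H$, and one checks $|P(x,\alpha)|_H$ grows at most linearly, or more carefully that $e^{-U_\alpha}|\nabla_H U_\alpha|_H$ is $\elle^p$ for all $p$), the chain rule gives $\nabla_H e^{-U_\alpha}=-e^{-U_\alpha}\nabla_H U_\alpha$, and since $U_\alpha$ is bounded below by an affine function $x^*(x)+c$ with $x^*\in X^*$, we have $e^{-U_\alpha}\leq e^{-c}e^{-x^*}$, which lies in $\elle^q(X,\mu)$ for every $q$ by Fernique's theorem. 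Combining, $e^{-U_\alpha}$ and $e^{-U_\alpha}|\nabla_H U_\alpha|_H$ are in $\elle^p(X,\mu)$ for every $p$, giving $e^{-U_\alpha}\in W^{1,p}(X,\mu)$. The Sobolev closability then identifies the weak derivative with the pointwise one.

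The main obstacle I anticipate is the integrability bookkeeping, specifically controlling $|P(x,\alpha)|_H$ (equivalently $|\nabla_H U_\alpha(x)|_H$) well enough to put $e^{-U_\alpha}|\nabla_H U_\alpha|_H$ in every $\elle^p$: the quantitative estimate $|P(x,\alpha)|^2_H\leq 2\alpha\bigl(U(x)-x^*(x-x_0)-c\bigr)$-type bound from the coercivity argument in Proposition \ref{proprieta g}/\ref{Convergenza MY} must be combined with $U\in\elle^t(X,\mu)$ and the exponential integrability of $e^{-U_\alpha}$, using H\"older's inequality with care about which exponents are admissible. Once that estimate is in hand, everything else is a routine application of the chain rule, Fernique's theorem, and Proposition \ref{Moreau in W22}; the proposition then follows.
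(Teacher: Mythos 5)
Your overall route is the same as the paper's: convexity of the infimal convolution, continuity from $U_\alpha\leq U$ plus convexity, differentiability along $H$ and the formula $\nabla_H U_\alpha=-\tfrac1\alpha P(\cdot,\alpha)$ from Proposition \ref{MY differentiable}, $H$-Lipschitzianity of the gradient from Proposition \ref{punto di minimo lip}, membership of $U_\alpha$ in $W^{2,t}(X,\mu)$ from Proposition \ref{Moreau in W22}, and the affine minorant of Proposition \ref{minorante affine} plus a one-dimensional Gaussian computation to get $e^{-U_\alpha}\in\elle^p(X,\mu)$ for every $p$. The one point you leave open --- and where your proposed fallback would actually fall short --- is the integrability of $e^{-U_\alpha}\abs{\nabla_H U_\alpha}_H$ for \emph{every} $p\geq 1$. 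The quantitative coercivity bound you sketch, $\abs{P(x,\alpha)}_H^2\lesssim\alpha\bigl(U(x)-x^*(x-x_0)-r\bigr)$ plus a linear term, only puts $\abs{P(\cdot,\alpha)}_H$ in $\elle^{2t}(X,\mu)$ when all you use is $U\in\elle^t(X,\mu)$, so H\"older would give $e^{-U_\alpha}\in W^{1,p}(X,\mu)$ only for $p<2t$, not for all $p$. The paper closes this cleanly with the tool you already have in hand: $x\mapsto P(x,\alpha)$ is $H$-Lipschitz (Proposition \ref{punto di minimo lip}), and \cite[Theorem 5.11.2]{Bog98} says an $H$-Lipschitz $H$-valued map lies in $W^{1,q}(X,\mu;H)$ for every $q\geq1$, in particular in every $\elle^q(X,\mu;H)$; combined with $e^{-U_\alpha}\in\elle^q(X,\mu)$ for all $q$, H\"older gives the claim with no exponent restriction. (Your parenthetical ``$\nabla_H U_\alpha$ is bounded in $H$-norm'' is not literally true and should be replaced by exactly this citation.) With that substitution the argument is complete and coincides with the paper's.
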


\begin{proof}
By Proposition \ref{minorante affine} there exist $x^*\in X^*$ and $\eta\in\R$ such that $U_1(x)\geq x^*(x)+\eta$ for every $x\in X$. Then by Proposition \ref{Convergenza MY}, for every $x\in X$ we have
\begin{gather*}
U_\alpha(x)\geq U_1(x)\geq x^*(x)+\eta.
\end{gather*}
So $e^{-U_\alpha(x)}\leq e^{-x^*(x)-\eta}$ for every $x\in X$. By the change of variable formula (see \cite[Formula (A.3.1)]{Bog98}) we obtain
\begin{gather*}
\int_X e^{-x^*(x)-\eta}d\mu(x)=e^{-\eta}\int_\R e^{-\xi}d\mu_{x^*}(\xi)<+\infty,
\end{gather*}
where $\mu_{x^*}=\mu\circ (x^{*})^{-1}$. So $e^{-U_\alpha}\in \elle^{p}(X,\mu)$  for every $p\geq 1$. By the differentiability of $U_\alpha$ along $H$ (see Proposition \ref{MY differentiable}) we get $\nabla_H e^{-U_\alpha(x)}=-e^{-U_\alpha(x)}\nabla_H U_\alpha(x)$ for every $x\in X$. By Proposition \ref{punto di minimo lip} and \cite[Theorem 5.11.2]{Bog98} we get $e^{-U_\alpha}\in W^{1,p}(X,\mu)$, for every $p\geq 1$. Finally $U_\alpha\in W^{2,t}(X,\mu)$, by Proposition \ref{Moreau in W22}.

Differentiability along $H$ and the $H$-Lipschitzianity of $\nabla_H U_\alpha$ follow from Proposition \ref{punto di minimo lip} and Proposition \ref{MY differentiable}. The convexity follows from the following standard argument: let $\eps>0$, $x_1,x_2\in X$ and $\lambda\in[0,1]$ and consider $h_\eps(x_1),h_\eps(x_2)\in H$ such that for $i=1,2$
\[U(x+h_\eps(x_i))+\frac{1}{2\alpha}\abs{h_\eps(x_i)}_H^2\leq U_\alpha(x_i)+\eps.\]
We get
\begin{gather*}
U_\alpha(\lambda x_1+(1-\lambda)x_2)\leq\\
\leq U(\lambda x_1+(1-\lambda)x_2+\lambda h_\eps(x_1)+(1-\lambda)h_\eps(x_2))+\frac{1}{2\alpha}\abs{\lambda h_\eps(x_1)+(1-\lambda)h_\eps(x_2)}^2_H\leq\\
\leq \lambda\pa{U(x_1+h_\eps(x_1))+\frac{1}{2\alpha}\abs{h_\eps(x_1)}_H^2}+(1-\lambda)\pa{U(x_2+h_\eps(x_2))+\frac{1}{2\alpha}\abs{h_\eps(x_2)}_H^2}\leq\\
\leq \lambda U_\alpha(x_1)+(1-\lambda)U_\alpha(x_2)+\eps.
\end{gather*}
Letting $\eps\ra 0$ we get the convexity of $U_\alpha$ for every $\alpha\in(0,1]$. Continuity of $U_\alpha$ is a consequence of Proposition \ref{Convergenza MY} and \cite[Corollary 2.4]{ET99}.
\end{proof}

Arguing as in \cite[Proposition 4.2]{Fer15} and using Proposition  \ref{Integrabilita MY} we get
\[\nabla_H:\fcon^\infty_b(X)\ra\elle^2(X,\nu_\alpha;H)\]
is a closable operator in $\elle^2(X,\nu_\alpha)$ for every $\alpha\in(0,1]$. The same is true for the operator $(\nabla_H,\nabla_H^2):\fcon^\infty_b(X)\ra\elle^2(X,\nu_\alpha;H)\times\elle^2(X,\nu_\alpha;\mathcal{H}_2)$ (see Proposition \ref{chiusura hessiano}). In particular we can define the spaces $W^{1,2}(X,\nu_\alpha)$ and $W^{2,2}(X,\nu_\alpha)$ as the domains of their respective closures. 

For $\alpha\in(0,1]$, consider now the operator
\begin{gather*}
D(L_{\nu_\alpha})=\bigg\{u\in W^{1,2}(X,\nu_\alpha)\,|\, \text{there exists }v\in\elle^2(X,\nu_\alpha)\text{ such that }\\
\left.\int_X\gen{\nabla_H u,\nabla_H \varphi}d\nu_\alpha=-\int_Xv\varphi d\nu_\alpha\text{ for every }\varphi\in\fcon^\infty_b(X)\right\},
\end{gather*}
with $L_{\nu_\alpha}u=v$ if $u\in D(L_{\nu_\alpha})$.

Now we have all the tools needed to prove Theorem \ref{Main Theorem}. The arguments are similar to those in \cite[Theorem 3.9]{DPL14}, we give the proof just for the sake of completeness.

\begin{proof}[Proof of Theorem \ref{Main Theorem}]
Let $\ol{f}\in\fcon_b^\infty(X)$ and $\set{\alpha_n}_{n\in\N}\subseteq(0,1]$ be a decreasing sequence converging to zero. Consider the family of equations
\begin{gather}
\lambda u_{\alpha_n}-L_{\nu_{\alpha_n}}u_{\alpha_n}=\ol{f}.\label{Problema con alpha}
\end{gather}
By Proposition \ref{existence of strong solution} and Theorem \ref{Stime per lip}, for every $n\in\N$, equation \eqref{Problema con alpha} has a unique strong solution, which coincides with the weak solution, $u_{\alpha_n}\in W^{2,2}(X,\nu_{\alpha_n})$ such that
\begin{gather}\label{stime max regol per alpha n}
\begin{array}{c}
\displaystyle\norm{u_{\alpha_n}}_{\elle^2(X,\nu_{\alpha_n})}\leq\frac{1}{\lambda}\norm{\ol{f}}_{\elle^2(X,\nu_{\alpha_n})};\qquad \norm{\nabla_H u_{\alpha_n}}_{\elle^2(X,\nu_{\alpha_n};H)}\leq\frac{1}{\sqrt{\lambda}}\norm{\ol{f}}_{\elle^2(X,\nu_{\alpha_n})};\\
\displaystyle\norm{\nabla_H^2 u_{\alpha_n}}_{\elle^2(X,\nu_{\alpha_n};\mathcal{H}_2)}\leq \sqrt{2}\norm{\ol{f}}_{\elle^2(X,\nu_{\alpha_n})}.
\end{array}
\end{gather}
By Proposition \ref{Convergenza MY} and Proposition \ref{Integrabilita MY} we get, for every $n\in\N$,
\begin{gather}\label{stima uniforme per f in l2(x nu alpha)}
\norm{\ol{f}}^2_{\elle^2(X,\nu_{\alpha_n})}\leq \int_X\abs{\ol{f}(x)}^2e^{-U_1(x)}d\mu(x)\leq\norm{\ol{f}}_\infty^2\int_Xe^{-U_1(x)}d\mu(x)<+\infty.
\end{gather}
By Proposition \ref{Convergenza MY} we have $e^{-U}\leq e^{-U_{\alpha_n}}$. So the set $\set{u_{\alpha_n}\tc n\in\N}$ is bounded in $W^{2,2}(X,\nu)$.

By weak compactness a function $u\in W^{2,2}(X,\nu)$ and a subsequence, which we still denote by $\set{u_{\alpha_n}}_{n\in\N}$, exist such that $u_{\alpha_n}\ra u$ weakly in $W^{2,2}(X,\nu)$ and $u_{\alpha_n}$, $\nabla_Hu_{\alpha_n}$, $\nabla_H^2u_{\alpha_n}$ converge pointwise $\mu$-a.e. respectively to $u$, $\nabla_H u$ and $\nabla_H^2 u$.

By inequality \eqref{stima uniforme per f in l2(x nu alpha)} and the Lebesgue dominated convergence theorem we get
\begin{gather}\label{convergenza f segnato}
\lim_{n\ra+\infty}\norm{\ol{f}}_{\elle^2(X,\nu_{\alpha_n})}=\norm{\ol{f}}_{\elle^2(X,\nu)}.
\end{gather}
By the weak convergence of $\{u_{\alpha_n}\}_{n\in\N}$ in $W^{2,2}(X,\nu)$ to $u$, the lower semicontinuity of the norm of $\elle^2(X,\nu)$, $\elle^2(X,\nu;H)$ and $\elle^2(X,\nu;\mathcal{H}_2)$, inequalities \eqref{stime max regol per alpha n} and equality \eqref{convergenza f segnato} we have
\begin{gather*}
\norm{u}_{\elle^2(X,\nu)}\leq\liminf_{n\ra\infty}\norm{u_{\alpha_n}}_{\elle^2(X,\nu)} \leq\liminf_{n\ra\infty}\norm{u_{\alpha_n}}_{\elle^2(X,\nu_{\alpha_n})}\leq\frac{1}{\lambda}\liminf_{n\ra\infty}\norm{\ol{f}}_{\elle^2(X,\nu_{\alpha_n})}=\frac{1}{\lambda}\norm{\ol{f}}_{\elle^2(X,\nu)};
\end{gather*}
\begin{gather*}
\norm{\nabla_Hu}_{\elle^2(X,\nu;H)}\leq\liminf_{n\ra\infty}\norm{\nabla_Hu_{\alpha_n}}_{\elle^2(X,\nu;H)} \leq\liminf_{n\ra\infty}\norm{\nabla_Hu_{\alpha_n}}_{\elle^2(X,\nu_{\alpha_n};H)}\leq\\ 
\leq\frac{1}{\sqrt{\lambda}}\liminf_{n\ra\infty}\norm{\ol{f}}_{\elle^2(X,\nu_{\alpha_n})}=\frac{1}{\sqrt{\lambda}}\norm{\ol{f}}_{\elle^2(X,\nu)};
\end{gather*}
and
\begin{gather*}
\norm{\nabla^2_Hu}_{\elle^2(X,\nu;\mathcal{H}_2)}\leq\liminf_{n\ra\infty}\norm{\nabla^2_Hu_{\alpha_n}}_{\elle^2(X,\nu;\mathcal{H}_2)} \leq\liminf_{n\ra\infty}\norm{\nabla^2_Hu_{\alpha_n}}_{\elle^2(X,\nu_{\alpha_n};\mathcal{H}_2)}\leq\\ \leq\sqrt{2}\liminf_{n\ra\infty}\norm{\ol{f}}_{\elle^2(X,\nu_{\alpha_n})}=\sqrt{2}\norm{\ol{f}}_{\elle^2(X,\nu)}.
\end{gather*}
Now we show that $u$ is a weak solution of the equation
\[\lambda u-L_\nu u=\ol{f}.\]
We recall that $\{u_{\alpha_n}\}_{n\in\N}$ is a sequence of weak solutions of the equations \eqref{Problema con alpha}, i.e.
\begin{gather}
\lambda\int_X  u_{\alpha_n}\varphi\, d\nu_{\alpha_n}+\int_X\gen{\nabla_H u_{\alpha_n},\nabla_H\varphi}_H d\nu_{\alpha_n}=\int_X \ol{f} \varphi\,d\nu_{\alpha_n}
\label{formulazione debole u_alpha_n}
\end{gather}
for all $\varphi\in\fcon_b^\infty(X)$ and $n\in\N$. By inequalities \eqref{stime max regol per alpha n}, for every $\varphi\in\fcon_b^\infty(X)$ and $n\in\N$, we have
\begin{gather*}
\int_X\abs{u_{\alpha_n}\varphi} e^{-U_{\alpha_n}}d\mu\leq \norm{\varphi}_{\infty}\int_X\abs{u_{\alpha_n}}e^{-U_{\alpha_n}}d\mu\leq \norm{\varphi}_{\infty}\left(\int_X u^2_{\alpha_n}d\nu_{\alpha_n}\right)^{\frac{1}{2}}\left(\int_X e^{-U_{\alpha_n}}d\mu\right)^{\frac{1}{2}}\leq\\
\leq\frac{1}{\lambda}\norm{\varphi}_{\infty}\norm{\ol{f}}_{\elle^2(X,\nu_{\alpha_n})}\left(\int_X e^{-U_{1}}d\mu\right)^{\frac{1}{2}}\leq \frac{1}{\lambda}\norm{\varphi}_{\infty}\norm{\ol{f}}_{\infty}\left(\int_X e^{-U_{1}}d\mu\right).
\end{gather*}
Then by Proposition \ref{Convergenza MY}, Proposition \ref{Integrabilita MY}, the pointwise $\mu$-a.e. convergence of $u_{\alpha_n}$ to $u$, and the Lebesgue dominated convergence theorem we get
\[\lim_{n\ra+\infty} \lambda\int_X u_{\alpha_n}\varphi\, d\nu_{\alpha_n}=\lambda\int_X  u\varphi\, d\nu.\]
Similarly, for every $\varphi\in\fcon^\infty_b(X)$ and $n\in\N$, by inequalities \eqref{stime max regol per alpha n} we have
\begin{gather*}
\int_X\left|\gen{\nabla_Hu_{\alpha_n},\nabla_H\varphi}_H\right| e^{-U_{\alpha_n}}d\mu
\leq
\int_X|\nabla_H u_{\alpha_n}|_H |\nabla_H\varphi|_H e^{-U_{\alpha_n}}d\mu
\leq\norm{|\nabla_H\varphi|_H}_{\infty}\int_X|\nabla_H u_{\alpha_n}|_H e^{-U_{\alpha_n}}d\mu\leq \\
\leq \norm{|\nabla_H\varphi|_H}_{\infty}\pa{\int_X|\nabla_H u_{\alpha_n}|^2_H e^{-U_{\alpha_n}}d\mu}^{\frac{1}{2}}\pa{\int_X e^{-U_{\alpha_n}}d\mu}^{\frac{1}{2}}\leq\\
\leq\frac{1}{\sqrt{\lambda}}\norm{|\nabla_H\varphi|_H}_{\infty}\norm{\ol{f}}_{\elle^2(X,\nu_{\alpha_n})}\pa{\int_X e^{-U_{1}}d\mu}^{\frac{1}{2}}\leq \frac{1}{\lambda}\norm{|\nabla_H\varphi|_H}_{\infty}\norm{\ol{f}}_{\infty}\int_X e^{-U_{1}}d\mu.
\end{gather*}
Therefore by Proposition \ref{Convergenza MY}, Proposition \ref{Integrabilita MY}, the pointwise $\mu$-a.e. convergence of $\nabla_H u_{\alpha_n}$ to $\nabla_H u$, and the Lebesgue dominated convergence theorem we get
\[\lim_{n\ra+\infty} \int_X \gen{\nabla_H u_{\alpha_n},\nabla_H\varphi}_H\, d\nu_{\alpha_n}=\int_X \gen{\nabla_H u,\nabla_H\varphi}_H\, d\nu.\]
Finally, for every $\varphi\in\fcon^\infty_b(X)$ and $n\in\N$, we get
\[\int_X\varphi\ol{f}e^{-U_{\alpha_n}}d\mu\leq\norm{\varphi}_\infty \norm{\ol{f}}_\infty\int_Xe^{-U_{1}}d\mu.\]
Thus by Proposition \ref{Convergenza MY}, Proposition \ref{Integrabilita MY} and the Lebesgue dominated convergence theorem we get
\[\lim_{n\ra+\infty} \int_X \varphi\ol{f}\, d\nu_{\alpha_n}=\int_X \varphi\ol{f}\, d\nu.\]
Taking the limit in equation \eqref{formulazione debole u_alpha_n} as $n\ra+\infty$ we get the claim.
If $f\in\elle^2(X,\nu)$, a standard density argument gives us the assertions of our theorem.
\end{proof}

\section{A charactetization of the domain of $L_\nu$: the $\nabla_H U$ $H$-Lipschitz case}\label{A charactetization of the domain of Lnu}

We recall some basic facts about the divergence operator in weighted Gaussian spaces. For every measurable map $\Phi:X\ra X$ and every $f\in\fcon^\infty_b(X)$ we define
\[\partial_{\Phi}f(x):=\lim_{t\ra 0}\frac{f(x+t\Phi(x))-f(x)}{t},\]
whenever such limit exists. If the limit exists $\mu$-a.e. in $X$ and a function $g\in \elle^1(X,\nu)$ satisfies
\begin{gather}\label{condizione divergenza}
\int_X\partial_{\Phi}fd\nu=-\int_X fgd\nu
\end{gather}
for every $f\in\fcon^\infty_b(X)$, then $g$ is called weighted Gaussian divergence of $\Phi$. Furthermore if $g$ exists, then it is unique and it will be denoted by $\diver_\nu\Phi:=g$. Finally if $\Phi\in\elle^1(X,\nu;H)$ has weighted Gaussian divergence, then equality \eqref{condizione divergenza} becomes
\[\int_X\gen{\nabla_H f,\Phi}_Hd\nu=-\int_Xf\diver_\nu\Phi d\nu\]
for every $f\in\fcon^\infty_b(X)$. We will use the following result (see \cite[Proposition 5.3]{Fer15}).

\begin{pro}\label{proposizione divergenza}
Let $U$ be a function satisfying Hypothesis \ref{ipotesi peso} such that $\nabla_H U$ is $H$-Lipschitz. Then every $\Phi\in W^{1,2}(X,\mu;H)$ has a weighted Gaussian divergence $\diver_\nu\Phi\in\elle^2(X,\nu)$ and for every $f\in W^{1,2}(X,\nu)$ the following equality holds,
\[\int_X\gen{\nabla_H f,\Phi}_H d\nu=-\int_Xf\diver_\nu\Phi d\nu.\]
Furthermore, if $\varphi_i=\gen{\Phi,e_i}_H$ for every $i\in\N$, then
\[\diver_\nu\Phi=\sum_{i=1}^{+\infty}\pa{\partial_i\varphi_i-\varphi_i\partial_i U-\varphi_i\hat{e}_i},\]
where the series converges in $\elle^2(X,\nu)$. Finally $\norm{\diver_\nu\Phi}_{\elle^2(X,\nu)}\leq\norm{\Phi}_{W^{1,2}(X,\nu;H)}$.
\end{pro}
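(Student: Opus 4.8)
\medskip
\noindent The plan is to prove the divergence formula and the norm estimate first for smooth cylindrical vector fields and then to pass to the limit; beyond the weighted integration by parts formula \eqref{int by part} the only new ingredient is a Bochner-type identity into which the convexity of $U$ enters. So let $\Phi=\sum_{i=1}^N\varphi_i e_i$ with $\varphi_i\in\fcon^\infty_b(X)$. For $f\in\fcon^\infty_b(X)$ one has $\partial_\Phi f=\gen{\nabla_H f,\Phi}_H=\sum_{i=1}^N\varphi_i\partial_i f$ pointwise, and applying \eqref{int by part} to each product $\varphi_i f$ gives
\[\int_X\varphi_i\partial_i f\,d\nu=\int_X\varphi_i f\,(\partial_i U+\hat e_i)\,d\nu-\int_X f\,\partial_i\varphi_i\,d\nu=-\int_X f\,(\partial_i\varphi_i-\varphi_i\partial_i U-\varphi_i\hat e_i)\,d\nu.\]
Summing over $i$ shows that $\Phi$ has weighted Gaussian divergence $g_\Phi:=\sum_{i=1}^N(\partial_i\varphi_i-\varphi_i\partial_i U-\varphi_i\hat e_i)$, and each summand lies in $\elle^2(X,\nu)$ by H\"older's inequality, since $\partial_i U\in\elle^t(X,\mu)$ and $e^{-U}\in\elle^r(X,\mu)$ for all $r<t$ (recall $t>3$).

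The heart of the matter is the estimate $\norm{g_\Phi}_{\elle^2(X,\nu)}\le\norm{\Phi}_{W^{1,2}(X,\nu;H)}$ for such $\Phi$, which I would derive from the identity
\[\int_X g_\Phi^2\,d\nu=\int_X\abs{\Phi}_H^2\,d\nu+\sum_{i,j}\int_X(\partial_i\varphi_j)(\partial_j\varphi_i)\,d\nu+\int_X\gen{\nabla_H^2 U\,\Phi,\Phi}_H\,d\nu.\]
To obtain it: write $\int_X g_\Phi^2\,d\nu=-\int_X\gen{\nabla_H g_\Phi,\Phi}_H\,d\nu$ (the cylindrical formula with test function $g_\Phi$); a direct computation, using $\partial_j\hat e_i=\delta_{ij}$ and the definition of $\nabla_H^2 U$, gives $\partial_j g_\Phi=\diver_\nu(\partial_j\Phi)-\gen{\nabla_H^2 U\,\Phi,e_j}_H-\varphi_j$; multiplying by $\varphi_j$, summing over $j$ and integrating by parts once more (again via the cylindrical formula) produces the displayed identity. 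As $U$ is only in $W^{2,t}(X,\mu)$, I would first run this computation in finite dimensions, after replacing $U$ by a smooth convex mollification of a conditional expectation $\E_n U$ — still with Lipschitz gradient by Proposition \ref{psi n} and with positive semidefinite Hessian — and then let the mollification parameter and $n$ tend to their limits, as in Section \ref{The nabla_H U H-Lipschitz case}. Granting the identity, the last integral is $\ge 0$ by convexity of $U$ (Theorem \ref{Hessiano e convessita}), while $\sum_{i,j}(\partial_i\varphi_j)(\partial_j\varphi_i)=\operatorname{tr}\big((\nabla_H\Phi)^2\big)\le\norm{\nabla_H\Phi}_{\mathcal{H}_2}^2$, since $\operatorname{tr}(A^2)\le\operatorname{tr}(AA^*)=\norm{A}_{\mathcal{H}_2}^2$ for every $A\in\mathcal{H}_2$ (the difference being $\tfrac12\norm{A-A^*}_{\mathcal{H}_2}^2$); hence $\norm{g_\Phi}_{\elle^2(X,\nu)}^2\le\norm{\Phi}_{\elle^2(X,\nu;H)}^2+\norm{\nabla_H\Phi}_{\elle^2(X,\nu;\mathcal{H}_2)}^2\le\norm{\Phi}_{W^{1,2}(X,\nu;H)}^2$.

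For a general $\Phi\in W^{1,2}(X,\mu;H)$, regarded also as an element of $W^{1,2}(X,\nu;H)$, choose smooth cylindrical fields $\Phi_k\to\Phi$ in $W^{1,2}(X,\nu;H)$. By the estimate just proved, $\{g_{\Phi_k}\}_{k\in\N}$ is a Cauchy sequence in $\elle^2(X,\nu)$; its limit $g$ is independent of the approximating sequence and satisfies $\norm{g}_{\elle^2(X,\nu)}\le\norm{\Phi}_{W^{1,2}(X,\nu;H)}$. Passing to the limit in the cylindrical formula — using density of $\fcon^\infty_b(X)$ in $\elle^2(X,\nu)$ and closability of $\nabla_H$ — shows that $g=\diver_\nu\Phi$, and a final density step ($\fcon^\infty_b(X)$ dense in $W^{1,2}(X,\nu)$) together with Cauchy--Schwarz extends the integration by parts identity to every $f\in W^{1,2}(X,\nu)$. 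Choosing in particular $\Phi_N=\sum_{i=1}^N\gen{\Phi,e_i}_H e_i$ (after a preliminary conditional expectation to make the coefficients smooth cylindrical), the partial sums of $\sum_i(\partial_i\varphi_i-\varphi_i\partial_i U-\varphi_i\hat e_i)$ are precisely the $g_{\Phi_N}$, so the series converges to $\diver_\nu\Phi$ in $\elle^2(X,\nu)$, which is the asserted representation.

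The step I expect to be the main obstacle is the Bochner identity: it involves two derivatives of $\partial_i U$, which a priori is merely an $\elle^t$-function, together with an interchange of an infinite series and an integral. Both difficulties are handled by passing through finite-dimensional smooth convex mollified weights — so that the crucial sign $\nabla_H^2 U\ge 0$ survives the approximation — and then taking limits, exactly the scheme already used in Sections \ref{H-Moreau envelop}--\ref{The nabla_H U H-Lipschitz case}.
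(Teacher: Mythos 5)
The paper does not actually prove this proposition: it imports it verbatim from \cite[Proposition 5.3]{Fer15}, so there is no internal proof to compare yours against. Judged on its own terms, your argument is the standard commutation-identity proof for such divergence estimates and is essentially sound. The computation $\partial_j g_\Phi=\diver_\nu(\partial_j\Phi)-\gen{\nabla_H^2U\,\Phi,e_j}_H-\varphi_j$ is correct (using $\partial_j\hat e_i=\delta_{ij}$ and the symmetry of $\nabla_H^2U$), the resulting identity is the right one, the sign of the Hessian term is guaranteed by convexity via Theorem \ref{Hessiano e convessita}, and the algebraic inequality $\tr(A^2)\le\norm{A}_{\mathcal{H}_2}^2$ is correct, so $\norm{g_\Phi}^2_{\elle^2(X,\nu)}\le\norm{\Phi}^2_{\elle^2(X,\nu;H)}+\norm{\nabla_H\Phi}^2_{\elle^2(X,\nu;\mathcal{H}_2)}$ does follow for smooth cylindrical fields. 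You also correctly identify the two genuine technical obstructions (the weight has only $\elle^t$-integrable second derivatives, and $g_\Phi$ is not an admissible cylindrical test function), and the finite-dimensional mollification route you propose is exactly the scheme of Section \ref{The nabla_H U H-Lipschitz case}, under which convexity and the $H$-Lipschitz bound on the gradient are preserved uniformly; that limb of your argument is a sketch rather than a proof, but a workable one. One point you should make explicit in the final density step: for a general $\Phi\in W^{1,2}(X,\mu;H)$ it is not automatic that $\Phi$ belongs to $W^{1,2}(X,\nu;H)$ (there is no Sobolev embedding gain in infinite dimensions, and $e^{-U}$ is merely in every $\elle^p(X,\mu)$), so approximating $\Phi$ by cylindrical fields simultaneously in the $\mu$- and $\nu$-Sobolev norms requires justification; either supply it or read the statement as applying to $\Phi\in W^{1,2}(X,\mu;H)\cap W^{1,2}(X,\nu;H)$, which is all that is used in Theorem \ref{caratterizzazione dominio}, where $\Phi=\nabla_Hu$ with $u\in W^{2,2}(X,\nu)$.
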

We are now able to prove a characterization result for the domain of $L_\nu$.

\begin{thm}\label{caratterizzazione dominio}
Let $U$ be a function satisfying Hypothesis \ref{ipotesi peso} such that $\nabla_H U$ is $H$-Lipschitz. Then $D(L_\nu)= W^{2,2}(X,\nu)$. Moreover, for every $u\in D(L_\nu)$, it holds $L_\nu u=\diver_\nu\nabla_H u$ and
\begin{gather}\label{equivalenza norme}
\norm{u}_{D(L_\nu)}\leq\norm{u}_{W^{2,2}(X,\nu)}\leq\pa{2+\sqrt{2}}\norm{u}_{D(L_\nu)},
\end{gather}
where $\norm{\cdot}_{D(L_\nu)}$ is defined in formula \eqref{formula norma grafico}.
\end{thm}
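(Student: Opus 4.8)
The plan is to prove the two inclusions separately: $W^{2,2}(X,\nu)\subseteq D(L_\nu)$ using the weighted divergence calculus of Proposition~\ref{proposizione divergenza}, and $D(L_\nu)\subseteq W^{2,2}(X,\nu)$ by reducing to the a priori estimates of Theorem~\ref{Main Theorem} applied with $\lambda=1$.

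For the first inclusion, together with the identity $L_\nu u=\diver_\nu\nabla_H u$ and the left inequality in \eqref{equivalenza norme}, I would take $u\in W^{2,2}(X,\nu)$ and a sequence $\set{u_n}_{n\in\N}\subseteq\fcon^\infty_b(X)$ converging to $u$ in $W^{2,2}(X,\nu)$. Each $\nabla_H u_n$ is a smooth cylindrical $H$-valued map, hence lies in $W^{1,2}(X,\mu;H)$, so by Proposition~\ref{proposizione divergenza} it has a weighted Gaussian divergence; comparing the explicit series there (with $\varphi_i=\partial_i u_n$) with formula \eqref{formula Lnu} gives $\diver_\nu\nabla_H u_n=L_\nu u_n$. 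Applying the estimate $\norm{\diver_\nu\Phi}_{\elle^2(X,\nu)}\leq\norm{\Phi}_{W^{1,2}(X,\nu;H)}$ to $\Phi=\nabla_H(u_n-u_m)$ shows that $\set{L_\nu u_n}_{n\in\N}$ is Cauchy in $\elle^2(X,\nu)$; passing to the limit in $\int_X\gen{\nabla_H u_n,\nabla_H\varphi}_H\,d\nu=-\int_X(L_\nu u_n)\varphi\,d\nu$ for $\varphi\in\fcon^\infty_b(X)$ then yields $u\in D(L_\nu)$ with $L_\nu u=\lim_{n\ra+\infty}L_\nu u_n$, and we may take this limit as the definition of $\diver_\nu\nabla_H u$, consistently with Proposition~\ref{proposizione divergenza}. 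Since $\norm{\diver_\nu\nabla_H u_n}_{\elle^2(X,\nu)}\leq\norm{\nabla_H u_n}_{W^{1,2}(X,\nu;H)}$ and the right-hand side converges to $\norm{\nabla_H u}_{\elle^2(X,\nu;H)}+\norm{\nabla_H^2 u}_{\elle^2(X,\nu;\mathcal{H}_2)}$, passing to the limit gives $\norm{L_\nu u}_{\elle^2(X,\nu)}\leq\norm{\nabla_H u}_{\elle^2(X,\nu;H)}+\norm{\nabla_H^2 u}_{\elle^2(X,\nu;\mathcal{H}_2)}$, whence $\norm{u}_{D(L_\nu)}\leq\norm{u}_{W^{2,2}(X,\nu)}$.

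For the second inclusion and the right inequality, I would take $u\in D(L_\nu)$ and set $f:=u-L_\nu u\in\elle^2(X,\nu)$. The definition of $D(L_\nu)$ directly gives that $u$ is a weak solution of \eqref{Problema} with $\lambda=1$, so by the uniqueness assertion of Theorem~\ref{Main Theorem} this $u$ is the solution provided there; in particular $u\in W^{2,2}(X,\nu)$ with $\norm{u}_{\elle^2(X,\nu)}\leq\norm{f}_{\elle^2(X,\nu)}$, $\norm{\nabla_H u}_{\elle^2(X,\nu;H)}\leq\norm{f}_{\elle^2(X,\nu)}$ and $\norm{\nabla_H^2 u}_{\elle^2(X,\nu;\mathcal{H}_2)}\leq\sqrt2\,\norm{f}_{\elle^2(X,\nu)}$. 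Adding these three bounds and using $\norm{f}_{\elle^2(X,\nu)}\leq\norm{u}_{\elle^2(X,\nu)}+\norm{L_\nu u}_{\elle^2(X,\nu)}=\norm{u}_{D(L_\nu)}$ produces $\norm{u}_{W^{2,2}(X,\nu)}\leq(2+\sqrt2)\norm{u}_{D(L_\nu)}$.

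The genuinely delicate points are the verification that $\diver_\nu\nabla_H u_n=L_\nu u_n$ on cylindrical functions (routine from the explicit series in Proposition~\ref{proposizione divergenza} and formula \eqref{formula Lnu}, using $U\in W^{2,t}(X,\mu)$) and the observation that the one-sided $\nu$-estimate in Proposition~\ref{proposizione divergenza} is precisely what makes the passage from smooth $u_n$ to a general element of $W^{2,2}(X,\nu)$ work while simultaneously extending $\diver_\nu$ to the range of $\nabla_H$; once these are in place, both inclusions and the norm bounds are bookkeeping. One should also check that the notion of weak solution forced by $u\in D(L_\nu)$ is exactly the one in Theorem~\ref{Main Theorem}, so that its uniqueness clause applies.
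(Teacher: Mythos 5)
Your proposal is correct and follows essentially the same route as the paper: the inclusion $D(L_\nu)\subseteq W^{2,2}(X,\nu)$ and the right-hand inequality come from applying Theorem \ref{Main Theorem} to $f=\lambda u-L_\nu u$ (the paper takes $\lambda\in(0,1)$ and lets $\lambda\ra 1^-$, you take $\lambda=1$ directly, which is equivalent), while the reverse inclusion, the identity $L_\nu u=\diver_\nu\nabla_H u$, and the left-hand inequality come from Proposition \ref{proposizione divergenza}. Your only deviation is to reach the latter via an approximation by smooth cylindrical functions rather than by invoking Proposition \ref{proposizione divergenza} directly on $\nabla_H u$; this is a harmless (arguably more careful) rederivation of the same lemma, not a different argument.
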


\begin{proof}
Let $u\in D(L_\nu)$. We have
\(u-L_\nu u\in\elle^2(X,\nu)\). Then by Theorem \ref{Main Theorem} we get that $u\in W^{2,2}(X,\nu)$.
Let $u\in W^{2,2}(X,\nu)$, by Proposition \ref{proposizione divergenza} we get that $\diver_\nu\nabla_H u\in \elle^2(X,\nu)$ and
\[\int_X\gen{\nabla_H f,\nabla_H u}_Hd\nu=-\int_X f\diver_\nu\nabla_H ud\nu\]
for every $f\in\fcon^\infty_b(X)$. Then we have $u\in D(L_\nu)$ and $L_\nu u=\diver_\nu\nabla_H u$.

By Proposition \ref{proposizione divergenza} we have
\begin{gather}\label{equivalenza norme 1}
\norm{u}_{D(L_\nu)}=\norm{u}_{\elle^2(X,\nu)}+\norm{L_\nu u}_{\elle^2(X,\nu)}=\norm{u}_{\elle^2(X,\nu)}+\norm{\diver_\nu\nabla_H u}_{\elle^2(X,\nu)}\leq \norm{u}_{W^{2,2}(X,\nu)},
\end{gather}
for every $u\in D(L_\nu)$. Now if $u\in D(L_\nu)$, then for every $\lambda\in(0,1)$ the function $\lambda u-L_\nu u$ belongs to $\elle^2(X,\nu)$ and by Theorem \ref{Main Theorem} we get for $u\in D(L_\nu)$
\begin{gather}\label{equivalenza norme 2}
\begin{array}{c}
\displaystyle\norm{u}_{W^{2,2}(X,\nu)}\leq\pa{\frac{1}{\lambda}+\frac{1}{\sqrt{\lambda}}+\sqrt{2}}\norm{\lambda u- L_\nu u}_{\elle^2(X,\nu)}
\displaystyle \leq \pa{\frac{1}{\lambda}+\frac{1}{\sqrt{\lambda}}+\sqrt{2}}\pa{\lambda\norm{u}_{\elle^2(X,\nu)}+\norm{L_\nu u}_{\elle^2(X,\nu)}}\leq \\
\displaystyle \leq \pa{\frac{1}{\lambda}+\frac{1}{\sqrt{\lambda}}+\sqrt{2}}\pa{\norm{u}_{\elle^2(X,\nu)}+\norm{L_\nu u}_{\elle^2(X,\nu)}}=\pa{\frac{1}{\lambda}+\frac{1}{\sqrt{\lambda}}+\sqrt{2}}\norm{u}_{D(L_\nu)}.
\end{array}
\end{gather}
Letting $\lambda\ra 1^-$ in inequality \eqref{equivalenza norme 2} we get
\begin{gather}\label{equivalenza norme 3}
\norm{u}_{W^{2,2}(X,\nu)}\leq\pa{2+\sqrt{2}}\norm{u}_{D(L_\nu)}.
\end{gather}
Combining inequality \eqref{equivalenza norme 1} and inequality \eqref{equivalenza norme 3}, we get inequality \eqref{equivalenza norme}.
\end{proof}

\begin{remark}\label{remark dopo caratterizzazione dominio}
By the proof of Theorem \ref{caratterizzazione dominio}, if $U$ satisfies Hypothesis \ref{ipotesi peso} then
\begin{gather}\label{inclusione dominio}
D(L_\nu)\subseteq W^{2,2}(X,\nu),
\end{gather}
and inequality \eqref{equivalenza norme 3} holds for every $u\in D(L_\nu)$. We do not know if the additional assumption that $\nabla_H U$ is $H$-Lipschitz is necessary to guarantee the equality in formula \eqref{inclusione dominio} and inequality \eqref{equivalenza norme 1}.
\end{remark}

\section{Examples}

In this section we will denote by $d\xi$ the Lebesgue measure on $[0,1]$. We recall that a function $f:X\ra\R$ from a Banach space $X$ to $\R$ is G\^ateaux differentiable at $x\in X$ if for every $y\in X$ the limit
\begin{gather}\label{limite diff}
\lim_{t\ra 0}\frac{f(x+ty)-f(x)}{t}
\end{gather}
exists and defines a linear (in $y$) map $f'(x)(\cdot)$ which is continuous from $X$ to $\R$. Furthermore if the limit \eqref{limite diff} exists uniformly for $y\in X$ such that $\norm{y}_X=1$, then the function $f$ is said to be Fr\'echet differentiable. We will use the following result of Aronszajn (see \cite[Theorem 1 of Chapter 2]{Aro76} and \cite[Theorem 6]{Phe78}).

\begin{thm}\label{Aronszajn}
Suppose that $X$ is a separable real Banach space. If $f:X\ra\R$ is a continuous convex function, then $f$ is G\^ateaux differentiable outside of a Gaussian null set, i.e. a Borel set $A\subseteq X$ such that $\mu(A)=0$ for every non-degenerate Gaussian measure $\mu$ on $X$.
\end{thm}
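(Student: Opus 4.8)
This is the classical theorem of Aronszajn (refined to Gauss null sets by Phelps), which one may simply quote from \cite{Aro76} and \cite{Phe78}; here is the line of proof I would follow. The plan is to show that the set $N$ of points at which $f$ fails to be G\^ateaux differentiable is a Borel set that is negligible for every non-degenerate Gaussian measure on $X$.

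First I would reduce to a one-dimensional statement. A continuous convex function on a Banach space is locally Lipschitz, so for each $x$ the one-sided directional derivative $p_x(v):=\lim_{t\to 0^+}t^{-1}\pa{f(x+tv)-f(x)}$ exists, is finite, sublinear in $v$, and locally Lipschitz (with the local Lipschitz constant of $f$), hence continuous. With $q_x(v):=-p_x(-v)$ one has $q_x\le p_x$, and $f$ is G\^ateaux differentiable at $x$ exactly when $p_x$ is linear, i.e. when $p_x=q_x$ on $X$. Fixing a countable dense subset $D$ of the unit sphere of $X$ and using continuity and positive homogeneity of $p_x,q_x$, this happens iff $p_x=q_x$ on $D$, so, since $p_x(v)-q_x(v)=p_x(v)+p_x(-v)\ge0$,
\[N=\bigcup_{v\in D}\set{x\in X\tc p_x(v)+p_x(-v)>0}.\]
As $x\mapsto p_x(v)=\inf_{k\in\N}k\pa{f(x+k^{-1}v)-f(x)}$ is a countable infimum of continuous functions, it is Borel, hence $N$ is Borel.

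Next I would integrate against a fixed non-degenerate Gaussian measure $\mu$, with Cameron--Martin space $H$. Since $\mu$ is non-degenerate, $H$ is dense in $X$, so I may take the dense set $D=\set{v_n}_{n\in\N}$ above to lie in $H$ (normalise a countable dense subset of $H$); then $N=\bigcup_nN_n$ with $N_n:=\set{x\tc p_x(v_n)+p_x(-v_n)>0}$, and $x\in N_n$ precisely when $t\mapsto f(x+tv_n)$ is not differentiable at $0$. On each line parallel to $v_n$ the restriction of $f$ is a real convex function of one variable, hence differentiable off an at most countable, so Lebesgue-null, set. Because $v_n\in H$, the Cameron--Martin theorem makes $\mu$ quasi-invariant in the direction $v_n$, so disintegrating $\mu$ along $v_n$ its conditional measures on lines parallel to $v_n$ are absolutely continuous with respect to one-dimensional Lebesgue measure; Fubini then gives $\mu(N_n)=0$, whence $\mu(N)\le\sum_n\mu(N_n)=0$. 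Since $\mu$ was arbitrary, $N$ is a Gaussian null set.

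The only step that is not routine is the absolute continuity of the one-dimensional conditional measures of $\mu$ in Cameron--Martin directions --- this is precisely where non-degeneracy and the Gaussian structure of $\mu$ are used --- together with the disintegration that feeds it into Fubini; the rest is the elementary theory of convex functions of one real variable plus the homogeneity and continuity bookkeeping in the first step.
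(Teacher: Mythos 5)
The paper offers no proof of this statement: it is quoted verbatim from the literature, with the references \cite{Aro76} and \cite{Phe78} standing in for the argument. Your sketch is the standard Aronszajn--Phelps proof underlying those references and it is correct as outlined: the reduction of G\^ateaux differentiability of a locally Lipschitz convex function to the countable family of conditions $p_x(v)+p_x(-v)=0$ for $v$ in a countable dense set, the Borel measurability via the monotone difference quotients, and the choice of the dense set inside the Cameron--Martin space $H$ so that the one-dimensional convex-function fact can be fed through the disintegration of $\mu$ along a direction of $H$. The only step you leave as a black box --- absolute continuity of the conditional measures of a non-degenerate Gaussian measure on lines parallel to a Cameron--Martin vector --- is exactly the standard Cameron--Martin/Fubini fact, and you correctly flag it as the one place where the Gaussian structure is used; since the paper itself treats the whole theorem as citable background, nothing more is required.
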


Consider the classical Wiener measure $P^W$ on $\con[0,1]$ (see \cite[Example 2.3.11 and Remark 2.3.13]{Bog98} for its construction). Recall that the Cameron--Martin space $H$ is the space of the continuous functions $f$ on $[0,1]$ such that $f$ is absolutely continuous, $f'\in\elle^2([0,1],d\xi)$ and $f(0)=0$. In addition if $f,g\in H$, then $\abs{f}_H=\norm{f'}_{\elle^2([0,1],d\xi)}$ and 
\[\gen{f,g}_H=\int_0^1f'(\xi)g'(\xi)d\xi,\]
see \cite[Lemma 2.3.14]{Bog98}. An orthonormal basis of $\elle^2([0,1],d\xi)$ is given by the functions
\[e_n(\xi)=\sqrt{2}\sin\frac{\xi}{\sqrt{\lambda_n}}\qquad\text{where }\lambda_n=\frac{4}{\pi^2(2 n-1)^2}\text{ for every }n\in\N.\]
We recall that $P^W$ is a centered Gaussian measure on
\(\con_0[0,1]=\set{f\in\con[0,1]\tc f(0)=0}\),
and if $f\in H$, then 
\begin{gather}\label{norma H e norma L2}
\abs{f}^2_H=\sum_{i=1}^{+\infty}{\lambda_i}^{-1}\gen{f,e_i}^2_{\elle^2([0,1],d\xi)}.
\end{gather}

\subsection{A weight bounded from below}\label{A weight bounded from below}

Let $U(f)=\int_0^1f^2(\xi)d\xi$. Such function is convex, continuous and Fr\'echet differentiable at any $f\in\con_0[0,1]$. Moreover for every $g\in\con_0[0,1]$
\begin{gather}\label{derivata esempio 1}
U'(f)(g)=2\int_0^1f(\xi)g(\xi)d\xi.
\end{gather}
By the fact that $U(f)\leq\norm{f}^2_\infty$ and the Fernique theorem \cite[Theorem 2.8.5]{Bog98}, we get that $U\in \elle^t(\con_0[0,1],P^W)$ for every $t\geq 1$. By formula \eqref{derivata esempio 1} we get
\[\nabla_H U(f)=2\sum_{i=1}^{+\infty}\pa{\int_0^1f(\xi)e_i(\xi)d\xi}e_i.\]

By \cite[Proposition 5.4.6]{Bog98} if we show that $\nabla_H U$ is integrable for every $t\geq 1$ we get that $U\in W^{1,t}(\con_0[0,1],P^W)$. We claim that $\nabla_H U$ is $H$-Lipschitz. Indeed, for $f\in\con_0[0,1]$ and $h\in H$, we have
\begin{gather*}
\abs{\nabla_H U(f+h)-\nabla_H U(f)}^2_H=4\sum_{i=1}^{+\infty}\abs{\partial_i U(f+h)-\partial_i U(f)}^2=\\
=4\sum_{i=1}^{+\infty}\abs{\int_0^1 (f(\xi)+h(\xi))e_i(\xi)d\xi-\int_0^1 h(\xi)e_i(\xi)d\xi}^2=4\sum_{i=1}^{+\infty}\abs{\int_0^1 h(\xi)e_i(\xi)d\xi}^2=4\sum_{i=1}^{+\infty}\gen{h,e_i}_{\elle^2([0,1],d\xi)}^2.
\end{gather*}
Observing that $\lambda_i\leq 4\pi^{-2}$ for every $i\in\N$, we get
\begin{gather*}
\abs{\nabla_H U(f+h)-\nabla_H U(f)}^2_H= 4\sum_{i=1}^{+\infty}\gen{h,e_i}_{\elle^2([0,1],d\xi)}^2= 4\sum_{i=1}^{+\infty}\frac{\lambda_i}{\lambda_i}\gen{h,e_i}_{\elle^2([0,1],d\xi)}^2\leq\\
\leq \frac{16}{\pi^2}\sum_{i=1}^{+\infty}\lambda_i^{-1}\gen{h,e_i}_{\elle^2([0,1],d\xi)}^2=\frac{16}{\pi^2}\abs{h}^2_H
\end{gather*}
where the last equality follows from formula \eqref{norma H e norma L2}. According to \cite[Theorem 5.11.2]{Bog98} $\nabla_H U$ belongs to $W^{1,t}(\con_0[0,1],P^W;H)$ for every $t>1$. Thus $U$ satisfies Hypothesis \ref{ipotesi peso 2}.

Theorem \ref{Stime per lip} can be applied in this case. So for every $f\in\elle^2(\con_0[0,1],e^{-U}P^W)$ and $\lambda>0$ there exists a unique weak solution (which is also a strong solution, in the sense of Definition \ref{definizione strong solution}) $u\in W^{2,2}(\con_0[0,1],e^{-U}P^W)$ of equation \eqref{Problema} and $u$ satisfies the Sobolev regularity estimates of Theorem \ref{Main Theorem}. Furthermore we can apply Theorem \ref{caratterizzazione dominio} and get
\[D(L_{e^{-U}P^W})=W^{2,2}(\con_0[0,1],e^{-U}P^W).\]
Finally, for every $v\in D(L_{e^{-U}P^W})$, we have $L_{e^{-U}P^W} v=\diver_{e^{-U}P^W}\nabla_H u$ and
\begin{gather*}
\norm{v}_{D(L_{e^{-U}P^W})}\leq\norm{v}_{W^{2,2}(\con_0[0,1],e^{-U}P^W)}\leq\pa{2+\sqrt{2}}\norm{v}_{D(L_{e^{-U}P^W})},
\end{gather*}
where $\norm{\cdot}_{D(L_{e^{-U}P^W})}$ is defined in formula \eqref{formula norma grafico}.

\subsection{A unbounded weight}\label{A unbuonded weight}

Let $F(f)=\max_{\xi\in[0,1]}f(\xi)$ for $f\in\con[0,1]$. In order to compute the G\^ateaux derivative of $F$ we will use some classical arguments.

\begin{pro}\label{Proprieta massimo}
$F$ is Lipschitz continuous, convex and G\^ateaux differentiable at $f\in\con_0[0,1]$ if, and only if, $f\in M$ where
\[M=\set{f\in\con_0[0,1]\tc \text{ there exists a unique $\xi_f\in[0,1]$ such that }F(f)=f(\xi_f)}.\]
Furthermore, if $f\in M$ and $\xi_f$ is the unique maximum point of $f$, then $F'(f)(g)=g(\xi_f)$.
\end{pro}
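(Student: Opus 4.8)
The plan is to dispatch the elementary properties quickly and then reduce the differentiability criterion to a Danskin-type formula for the one-sided directional derivatives of $F$.

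\emph{Lipschitz bound and convexity.} For $f,g\in\con[0,1]$ and any $\xi\in[0,1]$ one has $f(\xi)\le g(\xi)+\norm{f-g}_\infty\le F(g)+\norm{f-g}_\infty$; taking the supremum over $\xi$ and then exchanging the roles of $f$ and $g$ gives $\abs{F(f)-F(g)}\le\norm{f-g}_\infty$, so $F$ is $1$-Lipschitz. Convexity follows at once from $\max_\xi\bigl(\lambda f(\xi)+(1-\lambda)g(\xi)\bigr)\le\lambda\max_\xi f(\xi)+(1-\lambda)\max_\xi g(\xi)$.

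\emph{One-sided derivatives.} Fix $f\in\con_0[0,1]$ and set $K(f):=\set{\xi\in[0,1]\tc f(\xi)=F(f)}$, a nonempty compact set. Since $F$ is convex, the difference quotient $t\mapsto t^{-1}(F(f+tg)-F(f))$ is nondecreasing, so $D^+F(f)(g):=\lim_{t\to0^+}t^{-1}(F(f+tg)-F(f))$ exists for every $g\in\con_0[0,1]$. I would establish the formula
\[D^+F(f)(g)=\max_{\xi\in K(f)}g(\xi).\]
The lower bound is immediate: for $\xi_0\in K(f)$, $F(f+tg)\ge f(\xi_0)+tg(\xi_0)=F(f)+tg(\xi_0)$, so the quotient is $\ge g(\xi_0)$. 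For the upper bound I would take $t_n\downarrow 0$, choose maximisers $\xi_n$ of $f+t_ng$, and pass to a subsequence with $\xi_n\to\xi^\ast$; continuity of $F$ forces $F(f+t_ng)\to F(f)$, and since $t_ng(\xi_n)\to0$ we get $f(\xi_n)\to F(f)$, hence $\xi^\ast\in K(f)$, while $F(f)\ge f(\xi_n)$ yields $t_n^{-1}(F(f+t_ng)-F(f))\le g(\xi_n)\to g(\xi^\ast)\le\max_{\xi\in K(f)}g(\xi)$.

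\emph{The characterization.} With this formula, both directions are short. If $f\in M$, then $K(f)=\set{\xi_f}$, so $D^+F(f)(g)=g(\xi_f)$ for all $g$; applying this to $-g$ shows the two-sided limit exists and equals $g(\xi_f)$, a continuous linear functional of $g$, so $F$ is G\^ateaux differentiable at $f$ with $F'(f)(g)=g(\xi_f)$. Conversely, if $f\notin M$, choose $\xi_1\ne\xi_2$ in $K(f)$; the function $g(\xi)=\xi$ lies in $\con_0[0,1]$ and has $g(\xi_1)\ne g(\xi_2)$, hence $\max_{\xi\in K(f)}g(\xi)>\min_{\xi\in K(f)}g(\xi)$. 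Since the left derivative of $F$ at $f$ along $g$ equals $-D^+F(f)(-g)=\min_{\xi\in K(f)}g(\xi)$, which differs from $D^+F(f)(g)=\max_{\xi\in K(f)}g(\xi)$, the two-sided limit does not exist and $F$ is not G\^ateaux differentiable at $f$. The main obstacle is the upper bound in the Danskin formula, which relies on compactness of $[0,1]$ together with continuity of $F$ to trap the limit of near-optimal points inside $K(f)$; respecting the constraint $g(0)=0$ in the converse is harmless, since $\xi\mapsto\xi$ already separates any two distinct maximisers.
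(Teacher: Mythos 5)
Your proof is correct, and it is organized differently from the paper's. The paper treats the two directions separately: for $f\in M$ it works directly with maximizers $\xi_t$ of $f+tg$, shows $\xi_t\to\xi_f$ using uniqueness of the maximum point, and squeezes $F(f+tg)-F(f)-tg(\xi_f)$ between $0$ and $\abs{t}\,\abs{g(\xi_t)-g(\xi_f)}$; for $f\notin M$ it derives a contradiction from the ad hoc test function $g_1(\xi)=\abs{\xi-\xi_1}$. You instead prove once the Danskin-type formula $D^+F(f)(g)=\max_{\xi\in K(f)}g(\xi)$ and read both directions off it. The core mechanism is the same (compactness of $[0,1]$ traps near-maximizers of $f+tg$ inside $K(f)$), but your packaging is cleaner and gives the full one-sided derivative at every point, not just at points of $M$. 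One concrete advantage of your converse: your separating direction $\xi\mapsto\xi$ lies in $\con_0[0,1]$, whereas the paper's $g_1(\xi)=\abs{\xi-\xi_1}$ has $g_1(0)=\xi_1$, which is nonzero unless $\xi_1=0$, so strictly speaking it is not an admissible direction in $\con_0[0,1]$; your choice repairs this at no cost. The only point worth making explicit in your write-up is that the upper bound in the Danskin formula is obtained along a subsequence, which suffices only because convexity makes the difference quotient monotone and hence guarantees that the full one-sided limit exists; you invoke this but should state the deduction.
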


\begin{proof}
Convexity and Lipschitz continuity are obvious. Let $\xi_f$ be the unique maximum point of $f\in M$ and for $t\in\R$ and $g\in\con_0[0,1]$ choose $\xi_t\in[0,1]$ such that \(F(f+tg)=f(\xi_t)+tg(\xi_t)\). Observe that
\begin{gather*}
0\leq f(\xi_f)-f(\xi_t)= f(\xi_f)+tg(\xi_f)-tg(\xi_f)-f(\xi_t)\leq f(\xi_t)+tg(\xi_t)-tg(\xi_f)-f(\xi_t)=t(g(\xi_t)-g(\xi_f)).
\end{gather*}
So we have 
\[\abs{f(\xi_f)-f(\xi_t)}\leq t(g(\xi_t)-g(\xi_f))\leq 2\abs{t}\norm{g}_\infty.\] 
Since $f\in M$ we have that $\xi_t\ra\xi_f$ for $t\ra 0$.
Observe that
\begin{gather}\label{differentiabilita max 1}
F(f+tg)-F(f)-tg(\xi_f)\geq f(\xi_f)+tg(\xi_f)-f(\xi_f)-tg(\xi_f)=0;\\
\intertext{and}\label{differentiabilita max 2}
F(f+tg)-F(f)-tg(\xi_f)\leq f(\xi_t)+tg(\xi_t)-f(\xi_t)-tg(\xi_f)
\leq t(g(\xi_t)-g(\xi_f))\leq \abs{t}\abs{g(\xi_t)-g(\xi_f)}.
\end{gather}
By inequality \eqref{differentiabilita max 1} and inequality \eqref{differentiabilita max 2} we have that if $f\in M$, then $F$ is G\^ateaux differentiable at $f$ and $F'(f)(g)=g(\xi_f)$.

Assume now that $F$ is G\^ateaux differentiable at $f\in\con_0[0,1]\ssm M$. Let $\xi_1,\xi_2\in[0,1]$ such that $F(f)=f(\xi_1)=f(\xi_2)$ and $\xi_1\neq \xi_2$. Set $g_1(\xi)=\abs{\xi-\xi_1}$ and observe that
\begin{gather*}
F(f+tg_1)-F(f)\geq f(\xi_2)+tg_1(\xi_2)-f(\xi_2)=t\abs{\xi_2-\xi_1};\\
F(f+tg_1)-F(f)\geq f(\xi_1)+tg_1(\xi_1)-f(\xi_1)=0. 
\end{gather*}
These inequalities gives us the following contradiction:
\begin{gather*}
\limsup_{t\ra 0^-}\frac{F(f+tg_1)-F(f)}{t}\leq 0,\qquad \liminf_{t\ra 0^+}\frac{F(f+tg_1)-F(f)}{t}\geq\abs{\xi_2-\xi_1}>0.
\end{gather*}
\end{proof}

Let $\delta_1$ be the Dirac measure concentrated in $1$ and consider the weight $e^{-U}$ where, for $f\in\con_0[0,1]$,
\[U(f)=F(f)+\delta_1(f).\]
By Proposition \ref{Proprieta massimo}, $U$ is Lipschitz continuous and convex, and it is easy to show that $U$ is unbounded, both from above and from below.

According to \cite[Theorem 5.11.2]{Bog98} $U\in W^{1,t}(\con_0[0,1],P^W)$ for every $t>1$. So $U$ satisfies Hypothesis \ref{ipotesi peso}. Furthermore by \cite[Definition 5.2.3 and Proposition 5.4.6(iii)]{Bog98}, Theorem \ref{Aronszajn} and by Proposition \cite[Proposition 4.6]{Fer15} it can be seen that
\begin{gather*}
\nabla_H U(f)=\sum_{i=1}^{+\infty}(e_i(\xi_f)+e_i(1))e_i\qquad\text{for }P^W\text{-a.e. } f\in\con_0[0,1].
\end{gather*}

Theorem \ref{Main Theorem} can be applied in this case. So for every $f\in\elle^2(\con_0[0,1],e^{-U}P^W)$ and $\lambda >0$ there exists a unique weak solution $u\in W^{2,2}(\con_0[0,1],e^{-U}P^W)$ of equation \eqref{Problema}. In addition
\begin{gather*}
\norm{u}_{\elle^2(\con_0[0,1],e^{-U}P^W)}
\leq\frac{1}{\lambda}\norm{f}_{\elle^2(\con_0[0,1],e^{-U}P^W)};\quad
\norm{\nabla_H u}_{\elle^2(\con_0[0,1],e^{-U}P^W;H)}\leq\frac{1}{\sqrt{\lambda}}\norm{f}_{\elle^2(\con_0[0,1],e^{-U}P^W)};\\
\norm{\nabla_H^2 u}_{\elle^2(\con_0[0,1],e^{-U}P^W;\mathcal{H}_2)}\leq \sqrt{2}\norm{f}_{\elle^2(\con_0[0,1],e^{-U}P^W)}.
\end{gather*}
Moreover by Remark \ref{remark dopo caratterizzazione dominio}, we obtain that \(D(L_{e^{-U}P^W})\subseteq W^{2,2}(\con_0[0,1],e^{-U}P^W)\), and for every $v\in D(L_{e^{-U}P^W})$ we have
\[\norm{v}_{W^{2,2}(\con_0[0,1],{e^{-U}P^W})}\leq\pa{2+\sqrt{2}}\norm{v}_{D(L_{e^{-U}P^W})},\] 
where $\norm{\cdot}_{D(L_{e^{-U}P^W})}$ is defined in formula \eqref{formula norma grafico}.

\bibliographystyle{plain}
\nocite{*} 
\bibliography{bibpesopoincare}

\end{document}